\newtheorem{theorem}{Theorem}[section]
\newtheorem{lemma}[theorem]{Lemma}
\newtheorem{proposition}[theorem]{Proposition}
\theoremstyle{definition}
\newtheorem{definition}[theorem]{Definition}
\newtheorem{example}[theorem]{Example}
\theoremstyle{remark}
\newtheorem{remark}[theorem]{Remark}
\numberwithin{equation}{section}
\begin{document}
\setcounter{page}{1}

\title[Nikolskii inequality, Besov spaces and multipliers]{Littlewood-Paley theorem, Nikolskii inequality, Besov spaces, Fourier and spectral  multipliers on graded Lie groups}

\author[D. Cardona]{Duv\'an Cardona}
\address{
  Duv\'an Cardona:
  \endgraf
  Department of Mathematics: Analysis, Logic and Discrete Mathematics
  \endgraf
  Ghent University, Belgium
  \endgraf
  {\it E-mail address} {\rm duvanc306@gmail.com,\, Duvan.CardonaSanchez@ugent.be }
  }

\author[M. Ruzhansky]{Michael Ruzhansky}
\address{
  Michael Ruzhansky:
  \endgraf
  Department of Mathematics: Analysis, Logic and Discrete Mathematics
  \endgraf
  Ghent University, Belgium
  \endgraf
 and
  \endgraf
  School of Mathematical Sciences
  \endgraf
  Queen Mary University of London
  \endgraf
  United Kingdom
  \endgraf
  {\it E-mail address} {\rm Michael.Ruzhansky@ugent.be}
  }

\thanks{The second author was supported in parts by the EPSRC
 grant EP/K039407/1 and by the Leverhulme Grant RPG-2014-02. 
 The authors were supported by the FWO Odysseus 1 grant G.0H94.18N: Analysis and Partial Differential Equations.
}

     \keywords{Nikolskii inequality, Besov spaces, Littlewood-Paley theorem, Fourier multipliers, spectral multipliers, graded Lie groups, nilpotent Lie groups}
     \subjclass[2010]{43A15, 43A22; Secondary 22E25, 43A80}

\begin{abstract}
In this paper we investigate Besov spaces on graded Lie groups. We prove a Nikolskii type inequality (or the Reverse H\"older inequality) on graded Lie groups and as consequence we obtain embeddings of Besov spaces. We prove a version of the Littlewood-Paley theorem on  graded Lie groups. The results are applied to obtain embedding properties of Besov spaces and multiplier theorems for both spectral and Fourier multipliers in Besov spaces on graded Lie groups. In particular, we  give a number of sufficient conditions for the boundedness of Fourier multipliers in Besov spaces.
\end{abstract} 

\maketitle

\tableofcontents
\allowdisplaybreaks

\section{Introduction} 

In this paper we are interested in advancing the notions and results of harmonic analysis in the setting of graded Lie groups, building up on the fundamental book \cite{FE} of Folland and Stein, as well as on more recent developments over the decades, in particular summarised in the recent book \cite{FR2} by V\'eronique Fischer and the second author. Indeed, as it was pointed out by Folland and Stein, the setting of homogeneous groups is ideal for the distillation of those results of harmonic analysis that depend only on the group and dilation structures of the underlying space, while the setting of graded Lie groups allows one to also use the techniques coming from the theory of partial differential operators. The difference between the classes of nilpotent, homogeneous and graded Lie groups is rather small, with the majority of nilpotent Lie groups allowing for a compatible graded structure, see \cite[Chapter 3]{FR2} for a detailed explanation. 
In particular, this setting includes the class of stratified groups (\cite{folland_75}) when the Rockland operator can be chosen to be the sub-Laplacian. We also mention that general Rockland operators on graded Lie groups naturally appear when one is dealing with questions concerning general partial differential operators on manifolds, as their liftings following the celebrated lifting procedure of Rothschild and Stein \cite{RS}.

\smallskip
Summarising the research of this paper, here we obtain the following results:

\begin{itemize}
\item establish the Nikolskii (or the reverse H\"older) inequality in the setting of graded Lie groups in terms of its homogeneous dimension. We believe such a result to be new already on stratified groups, and even on the Heisenberg group;
\item prove the Littlewood-Paley theorem on graded Lie groups for the dyadic decomposition associated to positive Rockland operators;
\item investigate homogeneous and inhomogeneous Besov spaces in terms of Rockland operators and prove their embedding properties. We show that the Besov spaces in this context are also the interpolation spaces between Sobolev spaces, and prove that they are independent of a particular choice of the Rockland operator used to define them. We also prove their embedding properties with the usual (locally defined) Besov spaces on $\mathbb R^n$;
\item apply these results to establish multiplier theorems for spectral and Fourier multipliers in Besov spaces on graded Lie groups. More precisely, we give negative results on the boundedness of invariant operators in Besov spaces. For Fourier multipliers, we show that the boundedness between $L^{p}$-spaces implies the boundedness on Besov spaces and give several applications of this result to Fourier multipliers using H\"ormander-Mihlin type and other theorems in this setting.
\end{itemize}

Nikolskii-type inequalities, following the usual terminology, are, roughly speaking,
inequalities between different metrics of the same function (usually trigonometric polynomials). Nikolskii \cite{Nik1} in 1951 proved the inequalities for $1\leq p\leq q\leq \infty$:
\begin{equation}
\Vert T_{L_1,L_2,...,L_n}\Vert_{L^{q}[0,2\pi]}\leq 2^{n}{[(2\pi)^{n}L_1 L_2 \cdots  L_n ] }^{\frac{1}{p}-\frac{1}{q}}\Vert T_{L_1,L_2,...,L_n}\Vert_{L^{p}[0,2\pi]},
\end{equation}
for trigonometric polynomials of the form
\begin{equation}
T_{L_1,L_2,...,L_n}=\sum_{k=1}^{n}\sum_{j_{k}=-L_k}^{L_k}c_{j_1,j_2,...,j_k}e^{i(j_1x_1+\cdots j_kx_{k})},
\end{equation}
as well as for entire functions of exponential type. 
Sometimes such inequality is also called the reverse H\"older inequality in the literature.

On $\mathbb{R}^n,$ the Nikolskii inequality takes the form
\begin{equation}
\Vert f\Vert_{L^{q}(\mathbb{R}^n)}\leq C[\mu[\textrm{c.h.}[\text{supp}(\widehat{f})]]]^{\frac{1}{p}-\frac{1}{q}}\Vert f\Vert_{L^{p}(\mathbb{R}^n)},
\end{equation}
for every function $f\in{L}^p(\mathbb{R}^n)$ with Fourier transform $\widehat{f}$ of compact support. Here,  $\textrm{c.h.(E)}$ denotes the convex hull of the set $E.$ Recently, the Nikolskii inequality has been considered in the setting of Lie groups $G.$ In \cite{Pe2}, Pesenson has obtained the Nikolskii inequality  for symmetric spaces $G/K$ of non-compact type. On the other hand, for compact homogeneous manifolds $G/K,$ in \cite{NRT} the following Nikolskii inequality was obtained:
\begin{equation}
\Vert T_{L} \Vert_{L^{q}(G/K)}\leq N(\rho L)^{\frac{1}{p}-\frac{1}{q}}\Vert T_{L} \Vert_{L^{p}(G/K)},
\end{equation}
for $0<p<q\leq \infty;$ here, if $0<p\leq 2,$ $\rho:=1$, and for $2<p\leq \infty,$ $\rho:=[\frac{p}{2}]+1,$  $N(L)\simeq L^{\dim G/K}$ is the Weyl eigenvalue counting function for the elliptic pseudo-differential operator $(I-\mathcal{L}_{G/K})^{\frac{1}{2}}$, where $\mathcal{L}_{G/K}$ is the Laplacian on ${G/K}.$ 

In this paper we prove a Nikolskii type inequality in the framework of graded Lie groups $G$. 
We believe this to be new also on stratified groups, even on the Heisenberg group.

This inequality is important in mathematical analysis because it is a fundamental tool in the proof of several embeddings properties of important function spaces such as Besov spaces. The Besov spaces form scales $B^r_{p,q}(G)$ carrying three indices $r\in\mathbb{R},$ $0<p,q\leq \infty,$ and they can be obtained by interpolation of suitable Sobolev spaces. As it was discussed in \cite{FR3}, Sobolev spaces can be defined on $\mathbb{R}^n,$ and on compact and non-compact Lie groups in various equivalent ways. In a recent work of the second author with V. Fischer, Sobolev spaces were introduced on arbitrary graded Lie groups by using positive Rockland operators (see \cite{FR3}). It is important to mention that Sobolev spaces on stratified Lie groups were introduced by Folland in \cite {folland_75} by using sub-Laplacians, and it was proved (see also \cite{FE}) that these spaces are different from their Euclidean counterpart defined by the Fourier transform or by using the local properties of the Laplace operators. The Folland's Sobolev spaces coincide with those introduced in \cite{FR3} on graded Lie groups in the setting of stratified groups. 
We also refer to \cite{BFG_refined} for a number of useful inequalities on graded Lie groups.

In this paper we use positive Rockland  operators in order to introduce Besov spaces on graded Lie groups, and later on, we prove that our Besov spaces can be obtained by interpolation of the Sobolev spaces introduced in \cite{FR3}. For special cases of parameters $p,q$ and $r,$ Besov spaces were also considered by Bahouri, G\'erard and Xu in \cite{bahouri}.  Apart of the trivial embeddings that can be obtained on the $q$ parameters for Besov spaces $B^{r}_{p,q}(G),$ the Nikolskii inequality will be a useful tool in order to establish embeddings that involve  the parameters $r$ and $p.$ 

As a substitute of the Plancherel theorem on $L^2(G),$ in $L^{p}(G)$ spaces, we prove a version of the Littlewood-Paley theorem and we will use both, our Nikolskii inequality and our Littlewood-Paley theorem in order to get boundedness of Fourier multipliers and spectral multipliers on Besov spaces.  For the case of Fourier multipliers we will use the version of the 
H\"ormander-Mihlin theorem in the nilpotent setting \cite{FR}.  

We note that in the case of the sub-Laplacian, a wealth of results is available, to mention only a few, see e.g. 
Folland \cite{folland_75} and Saka \cite{saka} for Sobolev spaces and Besov spaces on
stratified groups, respectively; Furioli, Melzi and Veneruso \cite{furioli} and Alexopoulos \cite{alexo} for the Littlewood-Paley theorem and Besov spaces, and for spectral multiplier theorems for the sub-Laplacian on Lie groups of polynomial growth, respectively. 
There are also many results on functions of sub-Laplacians in the fundamental monograph
by Varopoulos, Saloff-Coste and Coulhon \cite{VSC}.

The novelty of this paper is that we are working with Rockland operators; these are linear invariant homogeneous hypoelliptic partial differential operators, in view of the Helffer and Nourrigat's resolution of the Rockland conjecture in \cite{helffer+nourrigat-79}. Such operators always exist on graded Lie groups and, in fact, the existence of such operators on nilpotent Lie groups does characterise the class of graded Lie groups, see \cite[Section 4.1]{FR2} for further details and references. As the literature concerning the analysis based on sub-Laplacians is immense, we do not review it here, but refer to the introduction in \cite{FR2} for a more extensive presentation of the subject.
Some results of this paper were announced in \cite{CR}.

This paper is organised as follows. In Section \ref{Preliminaries} we present some preliminaries on the Fourier analysis of graded Lie groups and its homogeneous structure, and we present positive Rockland operators and elements of their functional calculus. For this we follow \cite{FR2}. In Section \ref{Nikolskii} we prove our version of the Nikolskii inequality for functions defined on graded Lie groups. In Section \ref{Littlewood-Paley} we prove our version of the Littlewood-Paley theorem. In Section \ref{Besov} we define Besov spaces and we prove some embedding properties for these spaces. In Section \ref{independenceinterpolation} we prove that Besov spaces can be obtained by interpolation of Sobolev spaces in the nilpotent setting and in Subsection \ref{sec:int} we prove further interpolation properties.
In Section \ref{SEC:local} we show embedding properties between localisation of these Besov spaces and the usual (Euclidean) Besov spaces.

Finally, in Section \ref{Multipliers} we study the boundedness of Fourier multipliers and spectral multipliers in Besov spaces. 
In the case of Fourier multipliers, we prove that $L^p(G)$-multipliers on graded nilpotent Lie groups generate multipliers in Besov spaces $B^{r}_{p,q}(G)$. As a consequence of this fact, we end Section \ref{Multipliers} with several examples on multipliers.

The authors would like to thank Alessio Martini for valuable remarks on the original version of the  manuscript.

\section{Preliminaries}\label{Preliminaries}

\noindent In this section, we recall  some preliminaries  on graded and homogeneous Lie groups $G$. The unitary dual of these groups will be denoted by $\widehat{G}$. We  also present the notion of Rockland operators and Sobolev spaces on $G$ and on the unitary dual $\widehat{G}$ by following \cite{FR}, to which we refer for further details on constructions presented in this section.

\subsection{Homogeneous and graded Lie groups} Let $G$ be a graded Lie group. This means that $G$ is a connected and simply connected nilpotent Lie group whose Lie algebra $\mathfrak{g}$ may be decomposed as the sum of subspaces $\mathfrak{g}=\mathfrak{g}_{1}\oplus\mathfrak{g}_{2}\oplus \cdots \oplus \mathfrak{g}_{s}$ such that $[\mathfrak{g}_{i},\mathfrak{g}_{j} ]\subset \mathfrak{g}_{i+j},$ and $ \mathfrak{g}_{i+j}=\{0\}$ if $i+j>s.$ This implies that the group $G$ is nilpotent because the sequence
$$ \mathfrak{g}_{(1)}:=\mathfrak{g},\,\,\,\,\mathfrak{g}_{(n)}:=[\mathfrak{g},\mathfrak{g}_{(n-1)}] $$
defined inductively terminates at $\{0\}$ in a finite number of steps. Examples of such groups are the Heisenberg group $\mathbb{H}^n$ and more generally any stratified groups where the Lie algebra $ \mathfrak{g}$ is generated by $\mathfrak{g}_{1}$. The exponential mapping from $\mathfrak{g}$ to $G$ is  a diffeomorphism, then, we can identify $G$ with $\mathbb{R}^n$ or $\mathfrak{g}_{1}\times \mathfrak{g}_{2}\times \cdots \times \mathfrak{g}_{s}$ as  manifolds. Consequently we denote by $\mathscr{S}(G)$ the Schwartz space of functions on $G,$ by considering the identification $G\equiv \mathbb{R}^n.$ Here, $n$ is the topological dimension of $G,$ $n=n_{1}+\cdots +n_{s},$ where $n_{k}=\mbox{dim}\mathfrak{g}_{k}.$ A family of dilations $D_{r},$ $r>0,$ on a Lie algebra $\mathfrak{g}$ is a family of linear mappings from $\mathfrak{g}$ to itself satisfying the following two conditions:
\begin{itemize}
\item For every $r>0,$ $D_{r}$ is a map of the form
$$ D_{r}=\textnormal{Exp}(\ln(r)A) $$
for some diagonalisable linear operator $A$ on $\mathfrak{g}.$
\item $\forall X,Y\in \mathfrak{g}, $ and $r>0,$ $[D_{r}X, D_{r}Y]=D_{r}[X,Y].$ 
\end{itemize}
We call  the eigenvalues of $A,$ $\nu_1,\nu_2,\cdots,\nu_n,$ the dilations weights or weights of $G$. A homogeneous Lie group is a connected simply connected Lie group whose Lie algebra $\mathfrak{g}$ is equipped with a family of dilations $D_{r}.$ In  such case, and with the notation above,  the homogeneous dimension of $G$ is given by  $$ Q=\textnormal{Tr}(A)=\sum_{l=1}^{s}l\cdot\dim \mathfrak{g}_l.  $$
We can transport dilations $D_{r}$ of the Lie algebra $\mathfrak{g}$ to the group by considering the family of maps
$$ \exp_{G}\circ D_{r} \circ \exp_{G}^{-1},\,\, r>0, $$
where $\exp_{G}:\mathfrak{g}\rightarrow G$ is the usual exponential function associated to the Lie group $G.$ We denote  this family of dilations also by $D_{r}$ and we refer to them as dilations on the group. If we write $rx=D_{r}(x),$ $x\in G,$ $r>0,$ then a relation on the homogeneous structure of $G$ and the Haar measure $dx$ on $G$ is given by $$ \int_{G}(f\circ D_{r})(x)dx=r^{-Q}\int_{G}f(x)dx. $$

\subsection{The unitary dual and the Plancherel theorem} We will always equip a graded Lie group with the Haar measure $dx.$ For simplicity, we will write $L^p(G)$ for $L^p(G, dx).$ We denote by $\widehat{G}$  the unitary dual of $G,$ that is the set of equivalence classes of unitary, irreducible, strongly continuous representations of $G$ acting in separable Hilbert spaces. The unitary dual can be equipped with the Plancherel measure  $d\mu.$ So, the Fourier transform of every  function $\varphi\in \mathscr{S}(G)$ at $\pi\in \widehat{G}$ is defined by 
 $$  (\mathscr{F}_{G}\varphi)(\pi)\equiv\widehat{\varphi}(\pi)=\int_{G}\varphi(x)\pi(x)^*dx, $$
 and the corresponding Fourier inversion formula is given by
 $$  \varphi(x)=\int_{ \widehat{G}}\mathrm{Tr}(\pi(x)\widehat{\varphi}(\pi))d\mu(\pi).$$
In this case, we have the Plancherel identity
$$ \Vert \varphi \Vert_{L^2(G)}= \left(\int_{ \widehat{G}}\mathrm{Tr}(\widehat{\varphi}(\pi)\widehat{\varphi}(\pi)^*)d\mu(\pi) \right)^{\frac{1}{2}}=\Vert  \widehat{\varphi}\Vert_{ L^2(\widehat{G} ) } .$$
We also denote $\Vert\widehat{\varphi} \Vert^2_{\textnormal{HS}}=\textnormal{Tr}(\widehat{\varphi}(\pi)\widehat{\varphi}(\pi)^*)$ the Hilbert-Schmidt norm of operators.  A  Fourier multiplier  is formally defined by

\begin{equation}\label{mul}T_{\sigma}u(x)=\int_{ \widehat{G}}\mathrm{Tr}(\pi(x)\sigma(\pi)\widehat{f}(\pi))d\mu(\pi),
\end{equation}

\noindent where the symbol $\sigma(\pi)$  is defined on the unitary dual $\widehat{G}$ of $G.$ For a rather comprehensive treatment of this quantization we refer to \cite{FR2}  and to references therein. 
\subsection{Homogeneous linear operators and Rockland operators} A linear operator $T:\mathscr{D}(G)\rightarrow \mathscr{D}'(G)$ is homogeneous of  degree $\nu\in \mathbb{C}$ if for every $r>0$ 
\begin{equation}
T(f\circ D_{r})=r^{\nu}(Tf)\circ D_{r}
\end{equation}
holds for every $f\in \mathscr{D}(G). $
If for every representation $\pi\in\widehat{G},$ $\pi:G\rightarrow U(\mathcal{H}_{\pi}),$ we denote by $\mathcal{H}_{\pi}^{\infty}$ the set of smooth vectors, that is, the space of elements $v\in \mathcal{H}_{\pi}$ such that the function $x\mapsto \pi(x)v,$ $x\in \widehat{G}$ is smooth,  a Rockland operator is a left-invariant differential operator $\mathcal{R}$ which is homogeneous of positive degree $\nu=\nu_{\mathcal{R}}$ and such that, for every unitary irreducible non-trivial representation $\pi\in \widehat{G},$ $\pi(\mathcal{R})$ is injective on $\mathcal{H}_{\pi}^{\infty};$ $\sigma_{\mathcal{R}}(\pi)=\pi(\mathcal{R})$ is the symbol associated to $\mathcal{R}.$ It coincides with the infinitesimal representation of $\mathcal{R}$ as an element of the universal enveloping algebra. It can be shown that a Lie group $G$ is graded if and only if there exists a differential Rockland operator on $G.$ If the Rockland operator is formally self-adjoint, then $\mathcal{R}$ and $\pi(\mathcal{R})$ admit self-adjoint extensions on $L^{2}(G)$ and $\mathcal{H}_{\pi},$ respectively. Now if we preserve the same notation for their self-adjoint
extensions and we denote by $E$ and $E_{\pi}$  their spectral measures, by functional calculus we have
$$ \mathcal{R}=\int\limits_{-\infty}^{\infty}\lambda dE(\lambda),\,\,\,\textnormal{and}\,\,\,\pi(\mathcal{R})=\int\limits_{-\infty}^{\infty}\lambda dE_{\pi}(\lambda). $$
We now recall a lemma on dilations on the unitary dual $\widehat{G},$ which will be useful in our analysis of spectral multipliers.   For the proof, see Lemma 4.3 of \cite{FR}.
\begin{lemma}\label{dilationsrepre}
For every $\pi\in \widehat{G}$ let us define $D_{r}(\pi)=\pi^{(r)}$ by $D_{r}(\pi)(x)=\pi(rx)$ for every $r>0$ and $x\in G.$ Then, if $f\in L^{\infty}(\mathbb{R})$ then $f(\pi^{(r)}(\mathcal{R}))=f({r^{\nu}\pi(\mathcal{R})}).$
\end{lemma}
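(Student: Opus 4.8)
The plan is to reduce the functional-calculus identity to the operator identity $\pi^{(r)}(\mathcal{R})=r^{\nu}\pi(\mathcal{R})$ on the space of smooth vectors, and then to invoke the uniqueness of the functional calculus for (essentially) self-adjoint operators. The point is that both $f(\pi^{(r)}(\mathcal{R}))$ and $f(r^{\nu}\pi(\mathcal{R}))$ are, by definition, determined solely by the spectral measures of the underlying self-adjoint operators; once these two operators are shown to coincide, their spectral measures coincide and the claimed identity holds for every $f\in L^{\infty}(\mathbb{R})$.

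So the heart of the matter is the operator identity. First I would compute the infinitesimal representation of the dilated representation $\pi^{(r)}$. Since $\pi^{(r)}(x)=\pi(D_r x)$, and since the dilations on the group satisfy $D_{r}(\exp_{G} tX)=\exp_{G}(t D_{r}X)$, differentiating at $t=0$ gives, for every $X\in\mathfrak{g}$,
\[
d\pi^{(r)}(X)=d\pi(D_{r}X)\qquad\text{on }\mathcal{H}_{\pi}^{\infty}.
\]
When $X$ lies in a weight subspace, i.e. $AX=\nu_{j}X$ so that $D_{r}X=r^{\nu_{j}}X$, this yields $d\pi^{(r)}(X)=r^{\nu_{j}}d\pi(X)$.

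Next I would use that $\mathcal{R}$, regarded as an element of the universal enveloping algebra, is homogeneous of degree $\nu$. Since each left-invariant vector field $X_{j}$ associated with a weight vector is homogeneous of degree $\nu_{j}$, a monomial $X^{\alpha}=X_{1}^{\alpha_{1}}\cdots X_{n}^{\alpha_{n}}$ is homogeneous of degree $\sum_{j}\alpha_{j}\nu_{j}$. Homogeneity of degree $\nu$ therefore forces every monomial occurring in the Poincar\'e--Birkhoff--Witt expansion of $\mathcal{R}$ to satisfy $\sum_{j}\alpha_{j}\nu_{j}=\nu$. As $d\pi^{(r)}$ is an algebra homomorphism on the enveloping algebra, applying it to each such monomial and collecting the scalar factors from the previous step gives $d\pi^{(r)}(X^{\alpha})=r^{\sum_{j}\alpha_{j}\nu_{j}}d\pi(X^{\alpha})=r^{\nu}d\pi(X^{\alpha})$, and hence $\pi^{(r)}(\mathcal{R})=r^{\nu}\pi(\mathcal{R})$ on $\mathcal{H}_{\pi}^{\infty}$.

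The main obstacle I anticipate is functional-analytic rather than algebraic: the identity is obtained first only on the dense subspace $\mathcal{H}_{\pi}^{\infty}$ of smooth vectors, whereas the functional calculus refers to the self-adjoint extensions of these a priori unbounded operators. To close this gap I would argue that $\mathcal{H}_{\pi}^{\infty}$ is a common core for $\pi^{(r)}(\mathcal{R})$ and $\pi(\mathcal{R})$ (a standard property of Rockland operators, which are essentially self-adjoint on the smooth vectors), so that the algebraic identity on this core propagates to equality of the self-adjoint extensions. Multiplication by the positive scalar $r^{\nu}$ preserves the domain and self-adjointness, so $r^{\nu}\pi(\mathcal{R})$ is self-adjoint with the same core; equality of self-adjoint operators then yields equality of their spectral measures $E_{\pi}$, and the desired identity $f(\pi^{(r)}(\mathcal{R}))=f(r^{\nu}\pi(\mathcal{R}))$ follows for all $f\in L^{\infty}(\mathbb{R})$.
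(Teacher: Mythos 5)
Your proof is correct. The paper gives no proof of this lemma at all---it only cites Lemma 4.3 of \cite{FR}---and your argument (reduce to the operator identity $\pi^{(r)}(\mathcal{R})=r^{\nu}\pi(\mathcal{R})$ by computing $d\pi^{(r)}(X)=d\pi(D_{r}X)$ on weight vectors and using homogeneity of degree $\nu$ in the enveloping algebra, then pass to the self-adjoint extensions via the common core $\mathcal{H}_{\pi}^{\infty}$ and equality of spectral measures) is precisely the standard argument underlying that reference, with the essential self-adjointness step correctly identified and handled.
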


We refer to \cite[Chapter 4]{FR2} and references therein  for an exposition of further properties of Rockland operators and their history, and to ter Elst and Robinson \cite{TElst+Robinson} for their spectral properties.

\subsection{Sobolev spaces and the H\"ormander-Mihlin theorem}
In order to define Sobolev spaces, we choose  a positive left-invariant Rockland operator $\mathcal{R}$ of homogeneous degree $\nu>0$.  With notations above one defines  Sobolev spaces as follows (c.f \cite{FR2}).
\begin{definition}
Let $r\in \mathbb{R},$ the homogeneous Sobolev space $\dot{H}^{r,p}(G)$ consists of those $f\in \mathcal{D}'(G)$ satisfying
\begin{equation}
\Vert f\Vert_{\dot{H}^{r,p}(G)}:=\Vert \mathcal{R}^{\frac{r}{\nu}}f \Vert_{L^p(G)}<\infty.
\end{equation}
Analogously, the inhomogeneous Sobolev space ${H}^{r,p}(G)$ consists of those distributions $f\in \mathcal{D}'(G)$ satisfying 
\begin{equation}
\Vert f\Vert_{H^{r,p}(G)}:=\Vert (I+ \mathcal{R})^{\frac{r}{\nu}}f \Vert_{L^p(G)}<\infty.
\end{equation}
\end{definition}
By using a quasi-norm  $|\cdot| $ on $G$ we can introduce  for every $r\geq 0,$ the inhomogeneous Sobolev space of order $r$ on $\widehat{G},$ $H^{r}(\widehat{G})$ which is defined by $$H^{r}(\widehat{G})=\mathscr{F}_{G}(L^{2}(G, (1+|\cdot|^{2})^{\frac{r}{2}}dx)),$$ where $\mathscr{F}_{G}$ is the Fourier transform on the group $G.$  In a similar way, for $r\geq0$ the homogeneous Sobolev space $\dot{H}^{r}(\widehat{G})$ is defined by
$$\dot{H}^{r}(\widehat{G})=\mathscr{F}_{G}(L^{2}(G, |\cdot|^{r}dx)).$$
As usual if $r=0$ we denote $L^{2}(\widehat{G})=\dot{H}^{0}(\widehat{G})=H^{0}(\widehat{G}).$ Characterisations of Sobolev spaces on $G$ and on the unitary dual $\widehat{G}$ in terms of homogeneous norms on $G$ can be found in \cite{FR} and \cite{FR2}, respectively. 

Finally we present the H\"ormander-Mihlin theorem for graded nilpotent Lie groups. This theorem will be useful in our proof of the Littlewood-Paley theorem. The formulation of such result  requires a local notion of Sobolev space on the dual space $\widehat{G}.$  We introduce this as follows. Let $s\geq 0,$  we say that the field $\sigma=\{ \sigma(\pi):\pi\in\widehat{G}\}$ is locally uniformly in right-$H^{s}(\widehat{G})$ (resp. left-$H^{s}(\widehat{G})$)  if there exists a positive Rockland operator $\mathcal{R}$ and a function $\eta\in \mathcal{D}(G)$ satisfying
\begin{equation}
\Vert \sigma \Vert_{H^{s},l.u, R,\eta, \mathcal{R} }:=\sup_{r>0}\Vert\{\sigma(\pi^{(r)})\eta(\pi(\mathcal{R}))   \} \Vert_{H^{s}(\widehat{G})}<\infty,
\end{equation}
respectively,
\begin{equation}
\Vert \sigma \Vert_{H^{s},l.u, L,\eta, \mathcal{R} }:=\sup_{r>0}\Vert \{\eta(\pi(\mathcal{R}))\sigma(\pi^{(r)})   \} \Vert_{H^{s}(\widehat{G})}<\infty. 
\end{equation}
It important to mention that if $\phi$ is another function in $\mathcal{D}(0,\infty)$ then (see \cite{FR})
\begin{equation}\label{independencelu}
\Vert \sigma \Vert_{H^{s},l.u, R,\eta, \mathcal{R} }\asymp \Vert \sigma \Vert_{H^{s},l.u, R,\phi, \mathcal{R} },\textnormal{      and      } \Vert \sigma \Vert_{H^{s},l.u, L,\eta, \mathcal{R} }\asymp \Vert \sigma \Vert_{H^{s},l.u, L,\phi, \mathcal{R} }.
\end{equation}
The following lemma shows how Sobolev spaces on the unitary dual interact with the family of dilations.
  \begin{lemma}\label{lemmaLP}
Let $\sigma\in L^{2}(\widehat{G}).$ If $r>0$ and $s\geq 0$ then 
\begin{equation}
\Vert \sigma \circ D_{r}\Vert_{\dot{H}^s(\widehat{G})}= r^{s-\frac{Q}{2}}\Vert \sigma \Vert_{\dot{H}^{s}(\widehat{G})}.
\end{equation}
This implies that $\sigma \in \dot{H}^{s}(\widehat{G})$ if only if for every $r>0,$ $\sigma\circ D_{r}\in \dot{H}^{s}(\widehat{G}).$ Also, if $\mathcal{R},\mathcal{S}$ are positive Rockland operators and $\eta,\zeta\in \mathcal{D}(0,\infty),$ $\eta,\zeta\neq 0,$ then there exists $C>0$ such that
\begin{equation}
\Vert \sigma \Vert_{H^s,l.u,L,\zeta,\mathcal{S}}\leq C \Vert \sigma \Vert_{H^s,l.u,L,\eta,\mathcal{R}}
\end{equation}
and 
\begin{equation}
\Vert \sigma \Vert_{H^s,l.u,R,\zeta,\mathcal{S}}\leq C \Vert \sigma \Vert_{H^s,l.u,R,\eta,\mathcal{R}}.
\end{equation}
\end{lemma}
\begin{proof} By Lemma 2.1 or Lemma 4.3 of \cite{FR} we have
\begin{align*}\Vert \sigma\circ D_{r} \Vert_{\dot{H}^{s}(\widehat{G})} &=\Vert |\cdot|^{s} \mathscr{F}_{G}^{-1}(\sigma \circ D_{r}) \Vert_{L^{2}({G})}=\Vert |\cdot|^{s} r^{-Q}\mathscr{F}_{G}^{-1}(\sigma )(r^{-1}\cdot) \Vert_{L^{2}({G})}\\
&=r^{-\frac{Q}{2}}\Vert |r\cdot|^{s} \mathscr{F}_{G}^{-1}(\sigma ) \Vert_{L^{2}({G})}\\
&=r^{s-\frac{Q}{2}}\Vert \sigma\Vert_{\dot{H}^{s}(\widehat{G})}.
\end{align*}
With the equality above, it is clear that  $\sigma \in \dot{H}^{s}(\widehat{G})$ if only if for every $r>0,$ $\sigma\circ D_{r}\in \dot{H}^{s}(\widehat{G}).$  The second part of the Lemma  has been shown in Proposition 4.6 of \cite{FR}.
\end{proof}

Now, we state the H\"ormander-Mihlin theorem on the graded nilpotent Lie group $G$ (c.f. Theorem 4.11 of \cite{FR}):
\begin{theorem}\label{HMT}
Let $G$ be a graded Lie group. Let $\sigma\in L^{2}(\widehat{G}).$ If 
\begin{equation}\label{LPCRUZFIS}
    \Vert \sigma \Vert_{H^{s},l.u, L,\eta, \mathcal{R} },\Vert \sigma \Vert_{H^{s},l.u, R,\eta, \mathcal{R} }<\infty,
\end{equation} with $s>\frac{Q}{2},$ then the corresponding multiplier $T_{\sigma}$ extends to a bounded operator on $L^{p}(G)$ for all $1<p<\infty.$ Moreover
\begin{equation}
\Vert T_{\sigma} \Vert_{ \mathcal{L}(L^p(G))    }\leq C  \max\{ \Vert \sigma \Vert_{H^{s},l.u, L,\eta, \mathcal{R} },\Vert \sigma \Vert_{H^{s},l.u, R,\eta, \mathcal{R} } \}.
\end{equation}
\end{theorem}
The following remarks will be useful in our formulation of the Littlewood-Paley theorem.
\begin{remark}[On the proof of the H\"ormander-Mihlin Theorem]\label{theremarkof}The proof of the H\"ormander-Mihlin theorem (c.f. Theorem 4.11 of \cite{FR}) on graded Lie groups uses a suitable Littlewood-Paley decompostions of the symbol. Indeed,  for $\sigma$ satisfying \eqref{LPCRUZFIS}, the $L^p$-boundedness of $T_\sigma$ is proved in Theorem 4.11 of \cite{FR}, by decomposing 
\begin{equation}
    T_\sigma=\sum_{j\geq 0}T_{j},\,\,\, T_{j}:=T_\sigma \psi_{j}(\mathcal{R}),
\end{equation} and using that the right-convolution  kernels  of the family $T_j,$ $k_{j}:=\mathscr{F}^{-1}{(\sigma_{T_j})},$ summed on $j,$ provide the  distributional kernel of $T,$ $k=\sum_{j}k_{j},$ which agrees with a locally integrable function on $G\setminus \{0\},$ such that, for every $c>0,$
\begin{equation}\label{EQestimateweak(1,1)}
    \mathscr{I}_\ell:=\sup_{z\in G}\int\limits_{ |x|>4c|z|}|2^{-\ell Q}\kappa_{\ell}(2^{-\ell}\cdot z^{-1} x)  - 2^{-\ell Q}\kappa_{\ell}(2^{-\ell }\cdot x)|dx,
\end{equation} satisfies (see \cite{FR}, p. 26), $\mathscr{I}_\ell\lesssim    2^{-\ell \varepsilon_0}\max\{\Vert \sigma \Vert_{H^{s},l.u, L,\eta, \mathcal{R} },\Vert \sigma \Vert_{H^{s},l.u, R,\eta, \mathcal{R} }\},$ for some $\varepsilon_0>0,$ depending only of $c>0.$
The proof of the H\"ormander-Mihlin theorem developed by V. Fischer and the second author consists of proving that this kernel estimates are sufficiently good in order that  $$\Vert T_j\Vert_{\mathscr{B}(L^p(G))}\leq \mathscr{I}_j   \max\{\Vert \sigma \Vert_{H^{s},l.u, L,\eta, \mathcal{R} },\Vert \sigma \Vert_{H^{s},l.u, R,\eta, \mathcal{R} }\}$$ and consequently 
$$\Vert T\Vert_{\mathscr{B}(L^p(G))}\lesssim \sum_j  2^{-j \varepsilon_0}\max\{\Vert \sigma \Vert_{H^{s},l.u, L,\eta, \mathcal{R} },\Vert \sigma \Vert_{H^{s},l.u, R,\eta, \mathcal{R} }\}. $$

In particular, if $T=I,$ is the identity operator on $L^p(G),$ $\sigma(\pi)=I_{H_{\pi}},$ is the identity operator on $H_\pi,$ and the right convolution kernel $\kappa_\ell$ associated with  $\psi_{\ell}(\mathcal{R}),$ satisfies the estimate
\begin{equation}\label{ThLiWeak}
    \mathscr{I}_\ell\lesssim  2^{-\ell\varepsilon_0}.
\end{equation} The inequality \eqref{ThLiWeak} will be useful in our proof in the Littlewood-Paley theorem.
\end{remark}

In the next sections, we present our main results. We start with a formulation of the Nikolskii inequality on graded Lie groups.  

\section{Nikolskii Inequality on graded  Lie groups}\label{Nikolskii}

Let $G$ be a graded Lie group with a family of dilations $D_{t},$ $t>0.$ Let $\mathcal{R}$ be a positive Rockland operator of homogeneous degree $\nu>0,$ and for every $L>0,$ let us consider the linear operator  $\psi_{L}(\mathcal{R}),$ defined by the functional calculus, where $\psi_{L}(t)=\psi(L^{-1}t)$ and $\psi\in\mathscr{D}(0,\infty)$ is a function with compact   support in $[\frac{1}{2},2].$  In terms of the spectral resolution $(E(\lambda))_{\lambda\geq 0}$ associated with $\mathcal{R},$ we have
\begin{equation}
    T_{L}f\equiv \psi_{L}(\mathcal{R})f:=\int_{0}^\infty\psi(L^{-1}\lambda)dE(\lambda)f,\,\,
\end{equation} for every $f\in \mathscr{S}(G).$
Then $T_{L}$ is a spectral multiplier and $$\mathscr{F}_{G}(T_{L}f)(\pi)=\left(\int_{0}^{\infty} \psi_{L   }(\lambda)dE_{\pi}(\lambda)\right)\widehat{f}(\pi),$$ where $(E_\pi(\lambda))_{\lambda\geq 0}$ is the spectral resolution of $\pi(\mathcal{R}):=d\pi(\mathcal{R}).$ We denote $E_{\pi}(L)=\int_{0}^{\infty} \psi(L^{-1}\lambda)dE_{\pi}(\lambda),$ where $\psi\in \mathscr{D}(0,\infty)$ is a function satisfying $\psi=1$ on $[\frac{1}{2},1].$ In terms of the Fourier inversion formula we have
\begin{equation}\label{tl}
T_{L}f(x)=\int_{\widehat{G}}\textrm{Tr}[\pi(x)E_{\pi}(L)\psi_{L}(\pi)\widehat{f}(\pi)]d\pi.
\end{equation}
With notations above we present our version of the Nikolskii inequality in the following theorem.
\begin{theorem}\label{Nikolskii}
Let $G$ be a graded Lie group of homogeneous dimension $Q$, and let us consider the operator $T_{L}$ as in \eqref{tl}. If $1\leq p\leq q\leq \infty$ then
\begin{equation}
\Vert T_{L}f \Vert_{L^q}\leq  \Vert \mathscr{F}_{G}^{-1}[E_{\pi}(1)] \Vert_{L^{r}} L^{\frac{Q}{\nu}(\frac{1}{p}-\frac{1}{q})}\Vert T_Lf \Vert_{L^{p}},
\end{equation}
where $r=(1+(1/q-1/p))^{-1}.$
Since $\mathcal{F}_{G}^{-1}[E_{\pi}(1)]\in\mathscr{S}(G),$ its $L^r$-norm is finite.
\end{theorem}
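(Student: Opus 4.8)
The plan is to realise $T_L$ as a convolution operator and to exploit the idempotency of the spectral cut-off $\chi_{[0,L]}$. Writing $K_L := \mathcal{F}_G^{-1}[E_\pi(L)]$ for the convolution kernel of the spectral multiplier $T_L=\chi_L(\mathcal{R})$, one has $T_L f = f * K_L$, since $\mathscr{F}_G(f*K_L)(\pi)=\widehat{K_L}(\pi)\widehat{f}(\pi)=E_\pi(L)\widehat{f}(\pi)$. Because $\chi_{[0,L]}^2=\chi_{[0,L]}$, the functional calculus gives $T_L^2=T_L$, whence $T_L f = T_L(T_L f)=(T_L f)*K_L$. Applying Young's convolution inequality on the (unimodular, being nilpotent) group $G$ with exponents satisfying $1+\tfrac1q=\tfrac1p+\tfrac1r$ — which is precisely $r=(1+(1/q-1/p))^{-1}$ — I would obtain
\begin{equation*}
\Vert T_L f\Vert_{L^q}=\Vert (T_L f)*K_L\Vert_{L^q}\leq \Vert K_L\Vert_{L^r}\,\Vert T_L f\Vert_{L^p}.
\end{equation*}

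It then remains to compute $\Vert K_L\Vert_{L^r}$ in terms of $\Vert K_1\Vert_{L^r}$, and this is where Lemma \ref{dilationsrepre} enters. First, from the lemma, $E_{\pi^{(t)}}(1)=\chi_{[0,1]}(t^{\nu}\pi(\mathcal{R}))=E_\pi(t^{-\nu})$, so that $E_\pi(L)=E_{\pi^{(t)}}(1)$ for $t=L^{-1/\nu}$. Second, the change of variables $\int_G(f\circ D_s)(x)\,dx=s^{-Q}\int_G f(x)\,dx$ yields the dilation rule for the group Fourier transform, $\mathcal{F}_G[f\circ D_s](\pi)=s^{-Q}\widehat{f}(\pi^{(s^{-1})})$. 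Combining these two facts with $s=L^{1/\nu}$ gives the scaling relation
\begin{equation*}
K_L(x)=L^{Q/\nu}\,K_1\bigl(L^{1/\nu}x\bigr),\qquad\text{i.e.}\qquad K_L=L^{Q/\nu}\,(K_1\circ D_{L^{1/\nu}}).
\end{equation*}
A further application of the Haar-measure dilation identity then produces
\begin{equation*}
\Vert K_L\Vert_{L^r}=L^{\frac{Q}{\nu}\left(1-\frac1r\right)}\Vert K_1\Vert_{L^r}=L^{\frac{Q}{\nu}\left(\frac1p-\frac1q\right)}\Vert K_1\Vert_{L^r},
\end{equation*}
using $1-\tfrac1r=\tfrac1p-\tfrac1q$. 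Substituting this into the Young estimate gives exactly the asserted inequality, with $K_1=\mathcal{F}_G^{-1}[E_\pi(1)]$.

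I expect the main obstacle to be the careful derivation of the kernel scaling $K_L=L^{Q/\nu}(K_1\circ D_{L^{1/\nu}})$: one must track the dilation $\pi^{(t)}$ on the dual through both the spectral projection (via Lemma \ref{dilationsrepre}, correctly identifying $t=L^{-1/\nu}$) and the inverse Fourier transform, and confirm that the two occurrences of the factor $L^{Q/\nu}$ combine as claimed. A minor point to record is that if $\Vert K_1\Vert_{L^r}=\infty$ the inequality holds trivially, so Young's inequality is applied under the standing assumption $K_1\in L^r$; then $K_L\in L^r$ follows automatically from the scaling relation, and the unimodularity of $G$ legitimises Young's inequality with the usual exponents.
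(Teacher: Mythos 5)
Your proof is correct and is essentially the paper's own argument: both rest on the idempotency of the spectral projection (so that $T_Lf$ is reproduced by convolution with the kernel of the projection), Young's inequality on the unimodular group $G$, and the dilation scaling coming from Lemma \ref{dilationsrepre}. The only difference is organisational — the paper rescales the function, setting $g_L=L^{-Q/\nu}(T_Lf)\circ D_{L^{-1/\nu}}$ and convolving with the fixed kernel $\mathcal{F}_{G}^{-1}[E_{\pi}(1)]$ before undoing the scaling, whereas you keep $T_Lf$ fixed and rescale the kernel $K_L$ down to $K_1$; the two computations are term-for-term equivalent and yield the same constant.
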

\begin{proof}
Let us define for every $L>0,$ the function $$g_{L}:=L^{-{\frac{Q}{\nu}}}(T_Lf)\circ D_{L^{-\frac{1}{\nu}}},$$ i.e., $g_{L}(x)=L^{-{\frac{Q}{\nu}}}(T_Lf)(L^{-{\frac{1}{\nu}}}x),$ $x\in G.$  Denoting by $e=e_{G}$  the identity element of $G$, for every $\pi\in \widehat{G}$ we have
\begin{align*}
\widehat{g}_{L}(\pi)&=\int_{G}L^{-{\frac{Q}{\nu}} }(T_Lf)(L^{-{\frac{1}{\nu}}}x)\pi(x)^*dx\\
&=\int_{G}(T_Lf)(y)\pi(L^{\frac{1}{\nu}}y)^*dy\\
&=\widehat{T_{L}f}(\pi^{(L^{\frac{1}{\nu}})}).
\end{align*}
We observe that
$$ \widehat{T_{L}f}(\pi^{(L^{\frac{1}{\nu}})})=E_{\pi^{(L^{\frac{1}{\nu}})}}(L)\psi_{L}(\pi^{(L^{\frac{1}{\nu}})})\widehat{f}(\pi^{(L^{\frac{1}{\nu}})}).$$
By using the fact that for every $a\in \mathbb{R},$ $\pi^{(a)}(\mathcal{R})=\pi(a^{\nu}\mathcal{R}),$ in particular with $a=L^\frac{1}{\nu},$ (see Lemma \ref{dilationsrepre}, or \cite[Lemma 4.3]{FR}) we have
$$ E_{\pi^{(L^{\frac{1}{\nu}})}}(L)\psi_{L}(\pi^{(L^{\frac{1}{\nu}})})\widehat{f}(\pi^{(L^{\frac{1}{\nu}})})=\phi_{L}(L\pi(\mathcal{R}))\psi_{L}(L\pi(\mathcal{R}))\widehat{f}(\pi^{(L^{\frac{1}{\nu}})}), $$
and by considering that
\begin{align*}
\phi_{L}(L\pi(\mathcal{R}))\psi_{L}(L\pi(\mathcal{R}))&=\int_{0}^{\infty}\phi_{L}(L\lambda)\psi_{L}(L\lambda)dE_{\pi}(\lambda)=\int_{0}^{\infty}\phi(\lambda)\psi(\lambda)dE_{\pi}(\lambda),
\end{align*}  
we obtain that $\widehat{g}_{L}(\pi)=E_{\pi}(1)\widehat{f}(\pi^{(L^{\frac{1}{\nu}} )}).$ By properties of the functional calculus, we have 

$$\widehat{g}_{L}(\pi)=E_{\pi}(1)\widehat{g}_{L}(\pi).$$
Hence
$$g_{L}(x)=g_{L}\ast \mathscr{F}_{G}^{-1}[E_{\pi}(1)](x),\,\, x\in G.  $$
By applying Young inequality we have
\begin{equation}\label{gl}
\Vert g_{L} \Vert_{L^{q}}\leq \Vert \mathscr{F}_{G}^{-1}[E_{\pi}(1)] \Vert_{L^{r}}\Vert g_{L} \Vert_{L^{p}},
\end{equation}
provided that $\frac{1}{p}+\frac{1}{r}=\frac{1}{q}+1.$ We observe that the condition $1\leq p\leq q\leq \infty$ implies that $0\leq \frac{1}{r}=1+\frac{1}{q}-\frac{1}{p}\leq 1$ and consequently $1\leq r\leq \infty.$  Observe that for every $a>0$ we have
\begin{align*}
\Vert g_{L} \Vert_{L^a(G)}&=\left( \int_{G}|g_{L}(x)|^{a}dx\right)^{\frac{1}{a}}=\left( \int_{G}L^{-\frac{Qa}{\nu}}|T_{L}f(L^{-\frac{1}{\nu}}x)|^{a}dx\right)^{\frac{1}{a}}\\
&=\left( \int_{G}L^{[Q-Qa]/\nu}|T_{L}f(y)|^{a}dy\right)^{\frac{1}{a}}\\
&=L^{\frac{Q}{\nu}(\frac{1}{a}-1)}\Vert T_L f\Vert_{L^{a}}.
\end{align*}
So, by the inequality \eqref{gl}, we have
\begin{equation}
L^{\frac{Q}{\nu}(\frac{1}{q}-1)}\Vert T_L f \Vert_{L^{q}}\leq \Vert \mathscr{F}_{G}^{-1}[E_{\pi}(1)] \Vert_{L^{r}} L^{\frac{Q}{\nu}(\frac{1}{p}-1)}\Vert T_Lf \Vert_{L^{p}}.
\end{equation}
Thus, we obtain

\begin{equation}
\Vert T_{L}f \Vert_{L^q}\leq  \Vert \mathscr{F}_{G}^{-1}[E_{\pi}(1)] \Vert_{L^{r}} L^{\frac{Q}{\nu}(\frac{1}{p}-\frac{1}{q})}\Vert T_Lf \Vert_{L^{p}}.
\end{equation}
This completes the proof.
\end{proof}

\section{A vector valued-inequality for Littlewood-Paley decompositions and the Littlewood-Paley theorem on graded Lie groups}\label{Littlewood-Paley}

The Littlewood-Paley theory provides a partial substitute in $L^p$ spaces for the results
derived from the Plancherel theorem. The main notion in the Littlewood-Paley theory is the concept of a dyadic decomposition.  Here, the sequence $\{\psi_{l}\}_{l\in\mathbb{N}_{0}}$ is a dyadic decomposition,  defined as follows:  we choose a function $\psi_0\in C^{\infty}_{0}(\mathbb{R}),$  $\psi_0(\lambda)=1,$  if $|\lambda|\leq 1,$ and $\psi_0(\lambda)=0,$ for $|\lambda|\geq 2.$ For every $j\geq 1,$ let us define $\psi_{j}(\lambda)=\psi_{0}(2^{-j}\lambda)-\psi_{0}(2^{-j+1}\lambda).$ For $\psi(\lambda):=\psi_0(\lambda)-\psi_{0}(2\lambda),$ $\psi_{j}(\lambda)=\psi(2^{-j}\lambda).$  In particular, we have
\begin{eqnarray}\label{deco1}
\sum_{l\in\mathbb{N}_{0}}\psi_{l}(\lambda)=1,\,\,\, \text{for every}\,\,\, \lambda>0.
\end{eqnarray}
For versions of the Littlewood-Paley theorem for the sub-Laplacian on the Heisenberg group we can refer to Bahouri, G\'erard and Xu \cite{bahouri}, and for sub-Laplacians on groups of polynomial growth see Furioli, Melzi and Veneruso \cite{furioli}. Here we prove it for general Rockland operators on graded groups. Now we present the Littlewood-Paley theorem in the form of the following result.
\begin{theorem}\label{LPT}
Let  $1<p<\infty$ and let $G$ be a graded Lie group. If $\mathcal{R}$ is a positive Rockland operator then there exist  constants $0<c_p,C_{p}<\infty$ depending only on $p$ and $\psi_0$ such that
\begin{equation}\label{LPTequ}
c_p\Vert f\Vert_{L^{p}(G)}\leq \left\Vert \left(\sum_{l=0}^{\infty} |\psi_{l}(\mathcal{R})f|^{2}    \right)^{\frac{1}{2}}\right\Vert_{L^{p}(G)}\leq  C_{p}\Vert f\Vert_{L^{p}},
\end{equation}
holds for every $f\in L^{p}(G).$ Moreover, for $p=1,$  there exists a constant $C>0$ independent of $f\in L^1(G)$ and $t>0,$ such that 
\begin{equation}\label{weak(1,1)inequality}
    \left|\left\{x\in G:\left(\sum_{\ell=0}^\infty|\psi_\ell(\mathcal{R}) f(x)|^2   \right)^{\frac{1}{2}}>t \right\}\right|\leq \frac{C}{t}\Vert f\Vert_{L^1(G)}.
\end{equation}
\end{theorem}

For the proof of Theorem \ref{LPT}, we will assume for a moment the following  theorem. 

\begin{theorem}\label{Theoremr}
Let  $1<p,r<\infty$ and let $G$ be a graded Lie group. If $\mathcal{R}$ is a positive Rockland operator then there exist  constants $C_p>0$ depending only on $p$ and $\psi_0,$ such that
\begin{equation}
 \left\Vert \left(\sum_{\ell=0}^{\infty} |\psi_{\ell}(\mathcal{R})f_\ell|^{r}    \right)^{\frac{1}{r}}\right\Vert_{L^{p}(G)}\leq C_p \left\Vert \left(\sum_{\ell=0}^{\infty} |f_\ell(x)|^{r}    \right)^{\frac{1}{r}}\right\Vert_{L^{p}(G)}=: C_{p}\Vert \{f_\ell\}\Vert_{L^{p}(G,\ell^r(\mathbb{N}_0^n))}.
\end{equation}Moreover, for $p=1,$  there exists a constant $C>0$ independent of $\{f_\ell\}\in L^1(G,\ell^r(\mathbb{N}_0))$ and $t>0,$ such that 
\begin{equation}
    \left|\left\{x\in G:\left(\sum_{\ell=0}^\infty|\psi_\ell(\mathcal{R}) f_\ell(x)|^r   \right)^{\frac{1}{r}}>t \right\}\right|\leq \frac{C}{t}\Vert \{f_\ell\} \Vert_{L^1(G,\ell^r(\mathbb{N}_0)}.
\end{equation}
\end{theorem}

\begin{proof}[Proof of Theorem \ref{LPT}]

First we will prove that for every positive function $f\in L^p(G)\cap L^1(G),$ the estimate 
\begin{equation}\label{DCMR}
    \left\Vert \left(\sum_{l=0}^{\infty} |\psi_{l}(\mathcal{R})f|^{2}    \right)^{\frac{1}{2}}\right\Vert_{L^{p}(G)}\leq  C_p\Vert f\Vert_{L^{p}(G)},
\end{equation} holds true for every $1<p<\infty,$ and the inequality in the right hand side of \eqref{DCMR} can be extended to general  $f\in L^p(G)$ by the density argument. We will employ an argument of interpolation. First, let  us  prove \eqref{weak(1,1)inequality}. Indeed, it is equivalent to the fact that the vector-valued operator
\begin{equation}
    W(f)=\{\psi_\ell(\mathcal{R})f\}_{\ell=0}^\infty,
\end{equation} admits a bounded extension from $L^1(G)$ into $L^{1,\infty}(G,\ell^2(\mathbb{N}_0^n)).$ In view of the almost orthogonality of the functions  $x\mapsto f_\ell(x):=\psi_\ell(\mathcal{R}) f(x),$ on $L^2(G),$ which is consequence of the following property on the supports of the functions $\psi_{\ell},$ 
\begin{equation}
    \textnormal{supp}(\psi_\ell)\cap  \textnormal{supp}(\psi_{\ell'})=\emptyset,\,\,\,\,\, |\ell-\ell'|\geq 2,
\end{equation}
 we have
\begin{align*}
    \int\limits_{G} \sum_{\ell=0}^\infty |\psi_\ell(\mathcal{R}) f(x)|^2dx &\asymp\sum_{\ell=0}^\infty\int\limits_{G}|\psi_\ell(\mathcal{R}) f(x)|^2dx \asymp\int\limits_{G}  |\sum_{\ell=0}^\infty \psi_\ell(\mathcal{R})f(x)|^2dx\\
    &=\int\limits_{G}  |f(x)|^2dx, 
\end{align*}  which implies that $W$ admits a bounded extension from $L^2(G)$ into $L^{2}(G,\ell^2(\mathbb{N}_0^n)).$ So, if we prove \eqref{weak(1,1)inequality},  interpolating with the $L^2(G)-L^{2}(G,\ell^2(\mathbb{N}_0^n))$-boundedness of $W,$ we obtain that $W$ extends to a bounded operator from  $L^p(G)$ into $L^{p}(G,\ell^2(\mathbb{N}_0^n)),$ for all $1<p\leq 2.$ We will then extend the boundedness of $W$ for all $2\leq p<\infty,$ by using the duality argument. So, our proof consists of the following steps.
\begin{itemize}
\item {\bf Step 0.} Assume that $f$ is a non-negative function in $L^p(G).$
    \item {\bf Step 1.} Prove  the weak $(1,1)$-inequality  \eqref{weak(1,1)inequality}.
    \item {\bf Step 2.} Interpolation between  \eqref{weak(1,1)inequality} and the boundedness of $W$ from $L^2(G)$ into $L^{2}(G,\ell^2(\mathbb{N}_0^n)),$ in order to prove \eqref{DCMR} for all $1<p\leq 2.$
    \item {\bf Step 3.} Apply the duality argument for extending  \eqref{DCMR} for all $2\leq p<\infty.$
     \item {\bf Step 4.} Proof of the left hand side of \eqref{LPTequ}.
     \item{\bf Step 5.} Extend  \eqref{LPTequ} and \eqref{weak(1,1)inequality}  to general real-valued  functions in $L^p(G)$.
     \item{\bf Step 6.}  Extend  \eqref{LPTequ} and \eqref{weak(1,1)inequality} to general complex-valued functions  in $L^p(G)$.
\end{itemize}
Step 1.  Apply the Calder\'on-Zygmund decomposition Lemma to the non-negative function $f\in L^p(G)\cap L^1(G)\subset L^1(G),$ under the identification $G\simeq \mathbb{R}^n,$ (see, e.g. Hebish \cite{Hebish}) in order to obtain a disjoint collection $\{I_j\}_{j=0}^{\infty}$ of disjoint open sets such that
\begin{itemize}
    \item $f(x)\leq t,$ for $a.e.$ $x\in G\setminus \cup_{j\geq 0}I_j,$\\
    
    \item $\sum_{j\geq 0}|I_j|\leq \frac{C}{t}\Vert f\Vert_{L^1(G)},$ and\\ 
    
    \item $t|I_j|\leq \int_{I_j}f(x)dx\leq 2|I_j|t,$ for all $j.$ 
\end{itemize} Moreover, for every $j,$ let us define $R_j$ by
\begin{equation}\label{Rj}
    R_{j}:=\sup\{R>0: B(z_j,R)\subset I_j, \textnormal{   for some   }z_j\in I_j\},
\end{equation} where $B(z_j,R)=\{x\in I_j:|z_j^{-1}x|<R\}.$ Then, we can assume that every $I_j$ is diffeomorphic to an open cube on $\mathbb{R}^n,$ that it is bounded, and that  $I_j\subset B(z_j,2R_j),$ where $z_j\in I_j$ (see Hebish \cite{Hebish}). 
\begin{remark}
Before of continuing with the proof note that by assuming $f(e_G)>t,$ (this just re-defining $f\in L^p(G)\cap L^1(G)$ at the identity element) we should have that
\begin{equation}\label{eGasump}
    e_{G}\in \bigcup_{j}I_j,
\end{equation} because $f(x)\leq t,$ for $a.e.$ $x\in G\setminus \cup_{j\geq 0}I_j.$
\end{remark}
Let us define, for every $x\in I_j,$
\begin{equation}
    g(x):=\frac{1}{|I_j|}\int\limits_{I_j}f(y)dy,\,\,\,b(x)=f(x)-g(x),
\end{equation} and for $x\in G\setminus\cup_{j\geq 0}I_j,$
\begin{equation}
    g(x)=f(x),\,\,\, b(x)=0.
\end{equation} Observe that for every $x\in I_j,$ 
\begin{align*}
    |g(x)|= \left|   \frac{1}{|I_j|}\int\limits_{I_j}f(y)dy \right|\leq 2t.
\end{align*}
In view of the Minkowski inequality, we deduce that
\begin{align*}
   & \left|\left\{x\in G:\left(\sum_{\ell=0}^\infty|\psi_\ell(\mathcal{R}) f(x)|^2   \right)^{\frac{1}{2}}>t \right\}\right|\leq \left|\left\{x\in G:\left(\sum_{\ell=0}^\infty|\psi_\ell(\mathcal{R}) g(x)|^2   \right)^{\frac{1}{2}}>\frac{t}{2} \right\}\right|\\
    &\hspace{3cm}+\left|\left\{x\in G:\left(\sum_{\ell=0}^\infty|\psi_\ell(\mathcal{R}) b(x)|^2   \right)^{\frac{1}{2}}>\frac{t}{2} \right\}\right|.
\end{align*}
By the Chebyshev inequality, we have
\begin{align*}
    & \left|\left\{x\in G:\left(\sum_{\ell=0}^\infty|\psi_\ell(\mathcal{R}) f(x)|^2   \right)^{\frac{1}{2}}>t \right\}\right|\\
    &\leq  \left|\left\{x\in G:\left(\sum_{\ell=0}^\infty|\psi_\ell(\mathcal{R}) g(x)|^2   \right)^{\frac{1}{2}}>\frac{t}{2} \right\}\right|+ \left|\left\{x\in G:\left(\sum_{\ell=0}^\infty|\psi_\ell(\mathcal{R}) b(x)|^2   \right)^{\frac{1}{2}}>\frac{t}{2} \right\}\right|\\
    &= \left|\left\{x\in G:\sum_{\ell=0}^\infty|\psi_\ell(\mathcal{R}) g(x)|^2   >\frac{t^2}{2^2} \right\}\right|+ \left|\left\{x\in G:\left(\sum_{\ell=0}^\infty|\psi_\ell(\mathcal{R}) b(x)|^2   \right)^{\frac{1}{2}}>\frac{t}{2} \right\}\right|\\
    &\leq \frac{2^2}{t^2}\int_{G}\sum_{\ell=0}^\infty|\psi_\ell(\mathcal{R}) g(x)|^2dx+ \left|\left\{x\in G:\left(\sum_{\ell=0}^\infty|\psi_\ell(\mathcal{R}) b(x)|^2   \right)^{\frac{1}{2}}>\frac{t}{2} \right\}\right|.
\end{align*}
In view of the almost orthogonality of the functions   $x\mapsto g_\ell(x):=\psi_\ell(\mathcal{R}) g(x),$ on $L^2(G),$ we have
\begin{align*}
    \int\limits_{G} \sum_{\ell=0}^\infty |\psi_\ell(\mathcal{R}) g(x)|^2dx &=\sum_{\ell=0}^\infty\int\limits_{G}|\psi_\ell(\mathcal{R}) g(x)|^2dx &\asymp \int\limits_{G}  |\sum_{\ell=0}^\infty \psi_\ell(\mathcal{R})g(x)|^2dx=\int\limits_{G}  |g(x)|^2dx.
\end{align*} The  estimate
\begin{align*}
    \Vert g\Vert_{L^2(G)}^2&=\int\limits_{G}|g(x)|^2dx=\sum_{j}\int\limits_{I_j}|g(x)|^2dx+\int\limits_{G\setminus \cup_{j}I_j}|g(x)|^2dx\\
    &=\sum_{j}\int\limits_{I_j}|g(x)|^2dx+\int\limits_{G\setminus \cup_{j}I_j}|f(x)|^2dx\\
    &\leq \sum_{j}\int\limits_{I_j}(2t)^{2}dx+\int\limits_{G\setminus \cup_{j}I_j}f(x)^2dx\lesssim t^{2}\sum_{j}|I_j|+\int\limits_{G\setminus \cup_{j}I_j}f(x)f(x)dx\\
    &\leq t^{2}\times \frac{C}{t}\Vert f\Vert_{L^1(G)}+t\int\limits_{G\setminus \cup_{j}I_j}f(x)dx\lesssim t\Vert f\Vert_{L^1(G)},
\end{align*} implies that,
\begin{align*}
  \Small{  \left|\left\{x\in G:\left(\sum_{\ell=0}^\infty|\psi_\ell(\mathcal{R}) f(x)|^2   \right)^{\frac{1}{2}}>t \right\}\right|\leq \frac{4}{t}\Vert f\Vert_{L^1(G)}+ \left|\left\{x\in G:\left(\sum_{\ell=0}^\infty|\psi_\ell(\mathcal{R}) b(x)|^2   \right)^{\frac{1}{2}}>\frac{t}{2} \right\}\right|.}
\end{align*}
Taking into account that $b\equiv 0$ on $G\setminus \cup_j I_j,$ we have that
\begin{equation}
    b=\sum_{k}b_k,\,\,\,b_k(x)=b(x)\cdot 1_{I_k}(x).
\end{equation} Let us assume that $I_{j}^*$ is a open set, such that $I_j\subset I_j^*,$ and $|I_{j}^*|=K|I_{j}|$ for some $K>0,$ and $\textnormal{dist}(\partial I_{j}^*,\partial I_{j})\geq 4c\,\textnormal{dist}(\partial I_{j},e_{G}),$ where $c>0$ and $e_{G}$ is the identity element of $G$.  So, by the Minkowski inequality we have,
\begin{align*}
     & \left|\left\{x\in G:\left(\sum_{\ell=0}^\infty|\psi_\ell(\mathcal{R}) b(x)|^2   \right)^{\frac{1}{2}}>\frac{t}{2} \right\}\right|\\
      &=\left|\left\{x\in \cup_j I_j^*:\left(\sum_{\ell=0}^\infty|\psi_\ell(\mathcal{R}) b(x)|^2   \right)^{\frac{1}{2}}>\frac{t}{2} \right\}\right|+\left|\left\{x\in G\setminus  \cup_j I_j^*:\left(\sum_{\ell=0}^\infty|\psi_\ell(\mathcal{R}) b(x)|^2   \right)^{\frac{1}{2}}>\frac{t}{2} \right\}\right|\\
      &\leq \left|\left\{x\in G:x\in \cup_j I_j^* \right\}\right|+\left|\left\{x\in G\setminus  \cup_j I_j^*:\left(\sum_{\ell=0}^\infty|\psi_\ell(\mathcal{R}) b(x)|^2   \right)^{\frac{1}{2}}>\frac{t}{2} \right\}\right|.
      \end{align*} Since $$  \left|\left\{x\in G:x\in \cup_j I_j^* \right\}\right| \leq \sum_{j}|I_j^*|,  $$ we have
      \begin{align*}  & \left|\left\{x\in G:\left(\sum_{\ell=0}^\infty|\psi_\ell(\mathcal{R}) b(x)|^2   \right)^{\frac{1}{2}}>\frac{t}{2} \right\}\right|\\
      &\leq\sum_{j}|I_j^*|+\left|\left\{x\in G\setminus  \cup_j I_j^*:\left(\sum_{\ell=0}^\infty|\psi_\ell(\mathcal{R}) b(x)|^2   \right)^{\frac{1}{2}}>\frac{t}{2} \right\}\right|\\
      &=K\sum_{j}|I_j|+\left|\left\{x\in G\setminus  \cup_j I_j^*:\left(\sum_{\ell=0}^\infty|\psi_\ell(\mathcal{R}) b(x)|^2   \right)^{\frac{1}{2}}>\frac{t}{2} \right\}\right|\\
      &\leq \frac{CK}{t}\Vert f\Vert_{L^1(G)}+\left|\left\{x\in G\setminus  \cup_j I_j^*:\left(\sum_{\ell=0}^\infty|\psi_\ell(\mathcal{R}) b(x)|^2   \right)^{\frac{1}{2}}>\frac{t}{2} \right\}\right|.
  \end{align*}The Chebyshev inequality implies that
  \begin{align*}
     & \left|\left\{x\in G\setminus  \cup_j I_j^*:\left(\sum_{\ell=0}^\infty|\psi_\ell(\mathcal{R}) b(x)|^2   \right)^{\frac{1}{2}}>\frac{t}{2} \right\}\right|\\
     &\leq\frac{2}{t}\int\limits_{ G\setminus  \cup_j I_j^*} \left(\sum_{\ell=0}^\infty|\psi_\ell(\mathcal{R}) b(x)|^2   \right)^{\frac{1}{2}}dx\\
     &=\frac{2}{t}\int\limits_{ G\setminus  \cup_j I_j^*} \left(\sum_{\ell=0}^\infty\left|\left(\psi_\ell(\mathcal{R})\left(\sum_{k} b_k\right)   \right)(x)\right|^2\right)^{\frac{1}{2}}dx\\
     &=\frac{2}{t}\int\limits_{ G\setminus  \cup_j I_j^*} \left\Vert \{(\sum_{k} \psi_\ell(\mathcal{R})b_k   (x))\}_{\ell=0}^\infty\right\Vert_{\ell^2(\mathbb{N}_0)}dx\\
      &\leq \frac{2}{t}\int\limits_{ G\setminus  \cup_j I_j^*} \sum_{k}\left\Vert \{( \psi_\ell(\mathcal{R})b_k   (x))\}_{\ell=0}^\infty\right\Vert_{\ell^2(\mathbb{N}_0)}dx\\
     &= \frac{2}{t}\sum_{k}\int\limits_{ G\setminus  \cup_j I_j^*} \left(\sum_{\ell=0}^\infty\left|\left(\psi_\ell(\mathcal{R})b_k   \right)(x)\right|^2\right)^{\frac{1}{2}}dx.
  \end{align*}
If $\kappa_\ell$ is the right convolution kernel of     $\psi_\ell(\mathcal{R}) ,$ from the inequality, 
\begin{equation}
    \left(\sum_{\ell=0}^\infty\left|\left(\psi_\ell(\mathcal{R})b_k   \right)(x)\right|^2\right)^{\frac{1}{2}}\leq \sum_{\ell=0}^\infty\left|\left(\psi_\ell(\mathcal{R})b_k   \right)(x)\right|
\end{equation}  we deduce
\begin{align*}
     & \left|\left\{x\in G\setminus  \cup_j I_j^*:\left(\sum_{\ell=0}^\infty|\psi_\ell(\mathcal{R}) b(x)|^2   \right)^{\frac{1}{2}}>\frac{t}{2} \right\}\right|\leq  \frac{2}{t}\sum_{k}\int\limits_{ G\setminus  \cup_j I_j^*}\sum_{\ell=0}^\infty\left|\left(\psi_\ell(\mathcal{R})b_k   \right)(x)\right|dx\\
     &= \frac{2}{t}\sum_{k}\int\limits_{ G\setminus  \cup_j I_j^*}\sum_{\ell=0}^\infty\left|b_k   \ast \kappa_{\ell}(x)\right|dx\\
     &= \frac{2}{t}\sum_{k}\int\limits_{ G\setminus  \cup_j I_j^*}\sum_{\ell=0}^\infty\left|\int\limits_{I_k}b_k(z)\kappa_{\ell}(z^{-1}x)dz   \right|dx.
     \end{align*} By using that  $\int_{I_k}b_{k}(z)dz=0,$ we have
     \begin{align*}
     \frac{2}{t}\sum_{k} \int\limits_{ G\setminus  \cup_j I_j^*}  &\sum_{\ell=0}^\infty\left|\int\limits_{I_k}b_k(z)\kappa_{\ell}(z^{-1}x)dz   \right|dx\\
      &=\frac{2}{t}\sum_{k}\int\limits_{ G\setminus  \cup_j I_j^*}\sum_{\ell=0}^\infty\left|\int\limits_{I_k}b_k(z)\kappa_{\ell}(z^{-1}x)dz  - \kappa_{\ell}(x)\int\limits_{I_k}b_{k}(z)dz \right|dx\\
       &=\frac{2}{t}\sum_{k}\int\limits_{ G\setminus  \cup_j I_j^*}\sum_{\ell=0}^\infty\left|\int\limits_{I_k}(\kappa_{\ell}(z^{-1}x)  - \kappa_{\ell}(x))b_{k}(z) dz\right|dx\\
      &\leq\frac{2}{t}\sum_{k}\int\limits_{ G\setminus  \cup_j I_j^*} \sum_{\ell=0}^\infty\int\limits_{I_k}|(\kappa_{\ell}(z^{-1}x)  - \kappa_{\ell}(x))b_{k}(z)|dzdx\\
      &=\frac{2}{t}\sum_{k}\int\limits_{I_k}   \sum_{\ell=0}^\infty\int\limits_{ G\setminus  \cup_j I_j^*}|\kappa_{\ell}(z^{-1}x)  - \kappa_{\ell}(x)|dx|b_{k}(z)|dz.
     \end{align*}
 If we assume for a moment that
 \begin{equation}\label{calderonkernels}
    M=\sup_{k} \sup_{z\in I_k}\sum_{\ell=0}^\infty\int\limits_{ G\setminus  \cup_j I_j^*}|\kappa_{\ell}(z^{-1}x)  - \kappa_{\ell}(x)|dx<\infty,
 \end{equation}then we have
     \begin{align*}
          & \left|\left\{x\in G\setminus  \cup_j I_j^*:\left(\sum_{\ell=0}^\infty|\psi_\ell(\mathcal{R}) b(x)|^2   \right)^{\frac{1}{2}}>\frac{t}{2} \right\}\right|\leq \frac{2M}{t}\sum_{k}\int\limits_{I_k}|b_{k}(z)|dz\\
          &=\frac{2M}{t}\| b\|_{L^1(G)}\\
          &\leq \frac{6M}{t}\| f\|_{L^1(G)}.
     \end{align*}
     So, if we prove \eqref{calderonkernels} we obtain the weak (1,1) inequality \eqref{weak(1,1)inequality} and we finish the first step of the proof. The proof of \eqref{calderonkernels} lies in the estimates of the Calder\'on-Zygmund kernel of every operator $\psi_{\ell}(\mathcal{R}).$ Because $\textnormal{dist}(\partial I_{j}^*,\partial I_{j})\geq 4c\textnormal{dist}(\partial I_{j},e_{G}),$ for $x\in G\setminus \cup_jI_j^*,$   for $z\in I_{k},$ $4c|z|=4c\times \textnormal{dist}(z,e_G)\lesssim  \textnormal{dist}(\partial I_{k}^*,\partial I_{k})  \leq|x|.$  Indeed, fix $\varepsilon>0,$ and let us take 
     $w\in \partial I_{k},$ and $w'\in \partial I_{k}^*$ such that $d(w,w')\leq \textnormal{dist}(\partial I_{k},\partial I_{k}^*)+\varepsilon. $ Then, from the triangle inequality, we have
    \begin{equation}\label{Estimateforboundaries} 
\begin{array}{l}
        d(z,e_G)\\
        \leq d(z,w)+d(w,w')+d(w',e_G)
         \leq \textnormal{diam}(I_k)+\textnormal{dist}(\partial I_{k},\partial I_{k}^*)+\textnormal{dist}(\partial I_k^*,e_G)+\varepsilon\\
         \lesssim \textnormal{diam}(I_k)+\textnormal{dist}(\partial I_{k},\partial I_{k}^*)+\textnormal{dist}(\partial I_{k},e_G)+\varepsilon\\
         \lesssim \textnormal{diam}(I_k)+\textnormal{dist}(\partial I_{k},\partial I_{k}^*)+\frac{1}{4c}\textnormal{dist}(\partial I_{k},\partial I_{k}^*)+\varepsilon\\
         \asymp \textnormal{dist}(\partial I_{k},\partial I_{k}^*)+\varepsilon,
\end{array}
\end{equation}
where in the last line we have assumed that $\textnormal{diam}(I_k)\asymp \textnormal{dist}(\partial I_{k},\partial I_{k}^*),$ (with constants of proportionality independent in $k$) and that $\textnormal{dist}(\partial I_{k},\partial I_{k}^*)$ is proportional to $R_k$ in view of the relation $|I_k^*|=K|I_k|.$ Assuming \eqref{Estimateforboundaries}, one has that for all $\varepsilon>0,$ $ d(z,e_G)\lesssim  \textnormal{dist}(\partial I_{k},\partial I_{k}^*)+\varepsilon,$ which implies that \begin{equation}\label{mainestimateofk}
    d(z,e_G)\lesssim  \textnormal{dist}(\partial I_{k},\partial I_{k}^*).
\end{equation}  To show  that the proportionality constant in \eqref{mainestimateofk} is uniform in $k,$  let us recall the definition of the radii $R_{k}'s$ in \eqref{Rj},
  that  $B(z_k,R_{k})\subset I_k\subset B(z_{k},2R_{k}),$ and that $B(z_k,R_{k}/C)\subset I_{k}^{*}\subset B(z_k, CR_{k})$ for some $C>2$ independent of $k,$ where for any $k,$ $z_k\in I_{k}.$ From this remark observe that:
  \begin{itemize}
      \item The condition  $B(z_k,R_{k})\subset I_k\subset B(z_{k},2R_{k}),$ implies that $2R_k\leq \textnormal{diam}(I_k)\leq 4R_{k}. $
      \item That $B(z_k,R_{k})\subset I_k\subset I_{k}^{*}\subset B(z_{k},CR_{k}),$ implies that $$ \textnormal{dist}(\partial I_{k},\partial I_{k}^*) \leq \textnormal{dist}(\partial  B(z_{k},R_{k}),\partial B(z_{k},CR_{k}))=(C-1)R_{k}.  $$ On the other hand, by observing that in every step above we can replace $I_{k}^{*}:=B(z_{k},CR_{k}),$ in view of the inclusion $$ I_k\subset B(z_{k},2R_{k}) \subset I_{k}^{*}:= B(z_{k},CR_{k}),$$ we have
      $$ (C-2)R_k=\textnormal{dist}(\partial I_{k}^{*},\partial B(z_{k},2R_{k}) ) \leq \textnormal{dist}(\partial I_{k},\partial I_{k}^*) . $$
  \end{itemize}Consequently,
 \begin{align*}
   \textnormal{diam}(I_k) &\asymp R_{k}\asymp \textnormal{dist}(\partial  B(z_{k},2R_{k}),\partial B(z_{k},CR_{k})) \\
   &\asymp \textnormal{dist}(\partial I_{k},\partial I_{k}^*).
 \end{align*} To show that $\textnormal{dist}(\partial I_{k}^*,\partial I_{k})  \leq|x|,$ observe that from Remark \ref{eGasump}, $e_{G}\in \cup_{j}I_j,$  and because of $x\in G\setminus \cup_{j}I_j,$  $$\textnormal{dist}(\partial I_{k}^*,\partial I_{k})\lesssim \textnormal{diam}(\cup_jI_j) \lesssim d(x,e_G)=|x|. $$
So, we have guaranteed the existence of a positive constant, which we again denote by $c>0,$ such that, $$\{x\in G: x\in G\setminus \cup_jI_j^*\}\subset\{x\in G:\textnormal{ for all } z\in {I_k},\,\,\, 4c|z|\leq |x| \}.$$
     So, by a suitable variable change of variables  and by using \eqref{ThLiWeak}, we have 
     \begin{align*}
        M_k:= \sup_{z\in I_k}\sum_{\ell=0}^\infty\int\limits_{ G\setminus  \cup_j I_j^*} & |\kappa_{\ell}(z^{-1}x)  - \kappa_{\ell}(x)|dx\\
         &=\sup_{z\in I_k}\sum_{\ell=0}^\infty\int\limits_{ G\setminus  \cup_j I_j^*}|2^{-\ell Q}\kappa_{\ell}(2^{-\ell}\cdot z^{-1}x)  - 2^{-\ell Q}\kappa_{\ell}(2^{-\ell }\cdot x)|dx\\
         &\leq \sup_{z\in I_k} \sum_{\ell=0}^\infty\int\limits_{ |x|>4c|z|}|2^{-\ell Q}\kappa_{\ell}(2^{-\ell}\cdot z^{-1}x)  - 2^{-\ell Q}\kappa_{\ell}(2^{-\ell }\cdot x)|dx\\
         &\leq \sum_{\ell=0}^\infty \sup_{z\in G}           \int\limits_{ |x|>4c|z|}|2^{-\ell Q}\kappa_{\ell}(2^{-\ell}\cdot z^{-1}x)  - 2^{-\ell Q}\kappa_{\ell}(2^{-\ell }\cdot x)|dx\\
          &=  \sum_{\ell=0}^\infty \mathscr{I}_{\ell}\lesssim \sum_{\ell=0}^\infty  2^{-\ell\varepsilon_0}=O(1).
     \end{align*}Because $$ M_k:= \sup_{z\in I_k}\sum_{\ell=0}^\infty\int\limits_{ G\setminus  \cup_j I_j^*}  |\kappa_{\ell}(z^{-1}x)  - \kappa_{\ell}(x)|dx\lesssim \sum_{\ell=0}^\infty  2^{-\ell\varepsilon_0},$$ with the right hand side of the inequality being independent of $k,$ we conclude that $M$ in \eqref{calderonkernels} is finite.

   According to Step 2, the vector-valued interpolation between the  \eqref{weak(1,1)inequality} and the boundedness of $W$ from $L^2(G)$ into $L^{2}(G,\ell^2(\mathbb{N}_0^n)),$ allows us to conclude \eqref{DCMR} for all $1<p\leq 2.$  
   
   Step 3. Let us assume that $2\leq p<\infty,$ and let $f\in L^p(G,\ell^2).$ So, for a.e. $x\in G,$ $f(x)$ is a sequence in $\ell^2.$ If $p'$ is  the conjugate exponent of $p,$ $1<p'\leq 2,$ by using that $\psi_{j}(\mathcal{R})$ is self-adjoint, we have 
   \begin{align*}
     & \Vert Wf \Vert_{L^p(G,\ell^2)}
      \\
      &=\sup_{\Vert h\Vert_{L^{p'}(G,\ell^2) }=1}\int\limits_{G}(Wf(x),h(x))_{\ell^2}dx=\sup_{\Vert h\Vert_{L^{p'}(G,\ell^2) }=1}\int\limits_{G}\sum_{j=0}^{\infty}\psi_{j}(\mathcal{R})f(x)h_j(x)dx\\
      &=\sup_{\Vert h\Vert_{L^{p'}(G,\ell^2) }=1}\int\limits_{G}\sum_{j=0}^{\infty}f(x)\psi_{j}(\mathcal{R})^{*}h_j(x)dx\\
       &=\sup_{\Vert h\Vert_{L^{p'}(G,\ell^2) }=1}\int\limits_{G}(f(x),Wh(x))_{\ell^2}dx,\,\,\,\,\,Wh(x):=\{\psi_\ell(\mathcal{R})h_\ell\}_{\ell=0}^\infty,\\
       &\leq \Vert f \Vert_{L^p(G,\ell^2)} \sup_{\Vert h\Vert_{L^{p'}(G,\ell^2) }=1}\Vert Wh  \Vert_{L^{p'}(G,\ell^2)}.
   \end{align*} By using Theorem \ref{Theoremr} with $r=2,$ for $\Vert h\Vert_{L^{p'}(G,\ell^2) }=1,$ we have that
   \begin{align*}
       \Vert Wh  \Vert_{L^{p'}(G,\ell^2)}=\left\Vert \left(\sum_{\ell=0}^{\infty} |\psi_{\ell}(\mathcal{R})h_\ell|^{2}    \right)^{\frac{1}{2}}\right\Vert_{L^{p'}(G)}\leq C_{p'}\left\Vert \left(\sum_{\ell=0}^{\infty} |h_\ell(x)|^{2}    \right)^{\frac{1}{2}}\right\Vert_{L^{p'}(G)}= C_{p'},
   \end{align*}where the constant $C_{p'}>0,$ came from Theorem \ref{Theoremr}. Consequently, we have proved  \eqref{DCMR} for all $2\leq p<\infty,$ in view of the boundedness of $W$ from $L^p(G)$ into $L^p(G,\ell^2)$ for all  $2\leq p<\infty.$ \\
   
   Step 4.
The proof of the left hand side of \eqref{LPTequ} for non-negative $f$ is as follows. Now, let us denote by $(E(\lambda))_{\lambda \geq 0}$  the spectral resolution associated to $\mathcal{R},$ and for every $\pi\in \widehat{G}$ denote by $(E_{\pi}(\lambda))_{\lambda \geq 0}$ the spectral resolution of $\pi(\mathcal{R}).$  We observe that by duality
\begin{align*}
\Vert f\Vert_{L^p(G)}&\lesssim \sup\{|\int_{G}f(x)g(x)dx|:g\in \mathcal{D}(G), \,g\geq 0,\,\Vert g\Vert_{L^{p'}}=1  \}\\
&=\sup\{|\int_{G}\sum_{l\in\mathbb{N}_0}[\psi_{l}(\mathcal{R})f](x)g(x)dx|:g\in \mathcal{D}(G), \,g\geq 0,\, \Vert g\Vert_{L^{p'}}=1  \}\\
&=\sup\{|\int_{G}\sum_{l\in\mathbb{N}}[E^{(l)}\psi_{l}(\mathcal{R})f](x)g(x)dx\\&\hspace{3cm}+\int_{G}[E^{(0)}\psi_{0}(\mathcal{R})f(x)]g(x)dx|:g\in \mathcal{D}(G),  \,g\geq 0,\, \Vert g\Vert_{L^{p'}}=1   \}\\
&=\sup\{|\int_{G}\sum_{l\in\mathbb{N}}[\psi_{l}(\mathcal{R})f](x)E^{(l)}g(x)dx\\&\hspace{3cm}+\int_{G}[\psi_{0}(\mathcal{R})f(x)]E^{(0)}g(x)dx|:g\in \mathcal{D}(G), \,g\geq 0,\,\Vert g\Vert_{L^{p'}}=1   \},
\end{align*}
where $E^{(l)}:=\psi_{\ell-1}(\mathcal{R})+\psi_{\ell}(\mathcal{R})+\psi_{\ell+1}(\mathcal{R}),$ for $l\geq 1,$ and  $E^{(0)}:=\psi_{0}(\mathcal{R})+\psi_{1}(\mathcal{R})+\psi_{2}(\mathcal{R}).$  Consequently, we have
\begin{align*}
&\Vert f\Vert_{L^p(G)} \\
&\leq  \sup\{  \int_{G}\left|\sum_{l\in\mathbb{N}_0}|[\psi_{l}(\mathcal{R})f](x)|^{2}\right|^{\frac{1}{2}}\left|\sum_{l\in\mathbb{N}_0}|E^{(l)}g(x)|^{2}\right|^{\frac{1}{2}}dx:g\in \mathcal{D}(G),\,g\geq 0,\, \Vert g\Vert_{L^{p'}}=1   \}\\
&\leq  \sup\left\{  \left\Vert\left[\sum_{l\in\mathbb{N}_0}|[\psi_{l}(\mathcal{R})f](x)|^{2} \right]^{\frac{1}{2}}\right\Vert_{L^p(G)}    \left\Vert \left[\sum_{l\in\mathbb{N}_0}|E^{(l)}g(x)|^{2}\right]^{\frac{1}{2}}\right\Vert_{L^{p'}(G)}:g\in \mathcal{D}(G) , \,g\geq 0,\, \Vert g\Vert_{L^{p'}}=1  \right\}.
\end{align*}
Because 
\begin{equation}
        \left\Vert \left[\sum_{l\in\mathbb{N}_0}|E^{(l)}g(x)|^{2}\right]^{\frac{1}{2}}\right\Vert_{L^{p'}(G)}\asymp  \left\Vert \left[\sum_{l\in\mathbb{N}_0}|\psi_{\ell}(\mathcal{R})g(x)|^{2}\right]^{\frac{1}{2}}\right\Vert_{L^{p'}(G)} \lesssim \Vert g\Vert_{L^{p'}}=1,
\end{equation}
we obtain
$$ \Vert f\Vert_{L^p(G)} \lesssim \left\Vert\left[\sum_{l\in\mathbb{N}_0}|[\psi_{l}(\mathcal{R})f](x)|^{2} \right]^{\frac{1}{2}}\right\Vert_{L^p(G)} $$ for all $1<p<\infty$.\\
Step 5. Let us assume that $f\in L^p(G)$ is real-valued. Decompose $f=f^+-f^-,$ as the difference of two non-negative functions, where $f^+,f^-\in L^p(G), $ and $|f|=f^++f^{-}.$ Because, $f^+,f^-\leq |f|,$ the Minkowski inequality implies
\begin{align*}
     & \left\Vert\left[\sum_{l\in\mathbb{N}_0}|[\psi_{l}(\mathcal{R})f](x)|^{2} \right]^{\frac{1}{2}}\right\Vert_{L^p(G)}\\
     &\leq \left\Vert\left[\sum_{l\in\mathbb{N}_0}|[\psi_{l}(\mathcal{R})f^+](x)|^{2} \right]^{\frac{1}{2}}\right\Vert_{L^p(G)}+\left\Vert\left[\sum_{l\in\mathbb{N}_0}|[\psi_{l}(\mathcal{R})f^-](x)|^{2} \right]^{\frac{1}{2}}\right\Vert_{L^p(G)}\\
     &\leq C_{p}( \Vert f^+\Vert_{L^p(G)}+ \Vert f^-\Vert_{L^p(G)} ) \leq 2C_{p}\Vert f\Vert_{L^p(G)}.
\end{align*} So, we have proved the right hand side of \eqref{LPTequ}. For the proof of the left hand side, we only need to repeat the proof made in Step 4. The proof for the weak (1,1) inequality is similar. Indeed, 
\begin{align*}
   & \left|\left\{x\in G:\left(\sum_{\ell=0}^\infty|\psi_\ell(\mathcal{R}) f(x)|^2   \right)^{\frac{1}{2}}>t \right\}\right|\\
    &\leq \left|\left\{x\in G:\left(\sum_{\ell=0}^\infty|\psi_\ell(\mathcal{R}) f^+(x)|^2   \right)^{\frac{1}{2}}>\frac{t}{2} \right\}\right|+\left|\left\{x\in G:\left(\sum_{\ell=0}^\infty|\psi_\ell(\mathcal{R}) f^-(x)|^2   \right)^{\frac{1}{2}}>\frac{t}{2} \right\}\right|\\
    &\leq \frac{2C}{t}\Vert f^+ \Vert_{L^1(G)}+ \frac{2C}{t}\Vert f^- \Vert_{L^1(G)}\\
    &\leq  \frac{4C}{t}\Vert f \Vert_{L^1(G)}.
\end{align*} A similar analysis can be used for the proof  of Step 6. So, the proof of the Littlewood-Paley Theorem is complete.
\end{proof}

We end this section with the proof of the vector-valued inequalities presented in  Theorem \ref{Theoremr}.

\begin{proof}[Proof of Theorem \ref{Theoremr}]
Define the vector-valued operator
\begin{equation}\label{EQprimerita'}
    W:L^2(G,\ell^2(\mathbb{N}_0))_f\rightarrow L^2(G,\ell^2(\mathbb{N}_0)),
\end{equation} by
\begin{equation}
    W(\{f_\ell\}_{\ell=0}^\infty)=\{\psi_\ell(\mathcal{R})f_\ell\}_{\ell=0}^\infty,
\end{equation} where $L^2(G,\ell^2(\mathbb{N}_0))_f$ is the set of sequences $\{f_\ell\}_{\ell=0}^\infty,$ with compact support in the $\ell$-variables.  We claim that $W:L^2(G,\ell^2(\mathbb{N}_0))\rightarrow L^2(G,\ell^2(\mathbb{N}_0))$ extends to  a bounded operator.
Indeed, let us observe that
\begin{equation}
  \Vert W(\{f_\ell\}_{\ell=0}^\infty)\Vert_{L^2(\ell^2)}^2=   \int_G\sum_\ell\vert \psi_\ell(\mathcal{R})f_\ell(x) \vert^2 dx\leq C^2 \sum_\ell\Vert f_\ell \Vert^2_{L^2(G)} = \Vert \{f_\ell\}_{\ell=0}^\infty\Vert_{L^2(\ell^2)}^2,
\end{equation}
where 
\begin{equation}
    C=\sup_{\ell\geq 0}\Vert \psi_\ell(\mathcal{R}) \Vert_{\mathscr{B}(L^2(G))}= \sup_{\ell\geq 0}\Vert \psi_\ell(\pi(\mathcal{R})) \Vert_{\mathscr{B}(L^2(\widehat{G}))}\leq \sup_{\ell\geq 0,\lambda\geq 0}|\psi(2^{-\ell}\lambda)|=O(1).
\end{equation}
Now, we will assume for a moment that for every $\ell\in \mathbb{N}_0,$ the family of operators $\{\psi_\ell(\mathcal{R})\}$ is uniformly bounded from $L^1(G)$ into $L^{1,\infty}(G),$ this is, every operator $\psi_\ell(\mathcal{R})$ is of weak $(1,1)$ type  with the $(L^1(G),L^{1,\infty}(G))$-operator norm bounded with a constant independent of $\ell,$

\begin{equation}\label{theomega}
     \Omega:=\sup_\ell \Vert \psi_\ell(\mathcal{R})  \Vert_{\mathscr{B}(L^1(G),L^{1,\infty}(G))}<\infty.
\end{equation}This assumption allows us to show that \begin{equation}\label{EQprimerita}
    W:L^{1}(G,\ell^1(\mathbb{N}_0))\rightarrow L^{1,\infty}(G,\ell^1(\mathbb{N}_0)),
\end{equation} extends to a bounded operator.
Indeed, if we define $e_{\ell'}(\ell)=\delta_{\ell,\ell'},$ observe that
\begin{align*}
    &\Vert W(\{f_\ell\}_{\ell=0}^\infty)  \Vert_{L^{1,\infty}(G,\ell^1(\mathbb{N}_0))} =\Vert \{\psi_\ell(\mathcal{R})f_\ell\}_{\ell=0}^\infty  \Vert_{L^{1,\infty}(G,\ell^1(\mathbb{N}_0))}\\
    &=\Vert \sum_{\ell'=0}^\infty\{e_{\ell'}(\ell)\psi_\ell(\mathcal{R})f_\ell\}_{\ell=0}^\infty  \Vert_{L^{1,\infty}(G,\ell^1(\mathbb{N}_0))}\\
      &\lesssim \sum_{\ell'=0}^\infty \Vert \{e_{\ell'}(\ell)\psi_\ell(\mathcal{R})f_\ell\}_{\ell=0}^\infty  \Vert_{L^{1,\infty}(G,\ell^1(\mathbb{N}_0))}.
\end{align*}   The fact that $\Vert \{e_{\ell'}(\ell)\psi_\ell(\mathcal{R})f_\ell\}_{\ell=0}^\infty  \Vert_{L^{1,\infty}(G,\ell^1(\mathbb{N}_0))}=\sup_{t>0}\,t\cdot |\{x\in G: \vert \psi_{\ell'}(\mathcal{R})f_{\ell'}(x)|   >t\}|,$ implies that 
\begin{align*}
    &\Vert W(\{f_\ell\}_{\ell=0}^\infty)  \Vert_{L^{1,\infty}(G,\ell^1(\mathbb{N}_0))} =\sum_{\ell'=0}^\infty\sup_{t>0}\,t\cdot |\{x\in G: \vert \psi_{\ell'}(\mathcal{R})f_{\ell'}(x)|   >t\}|\\
    &=\sum_{\ell'=0}^\infty \Vert \psi_{\ell'}(\mathcal{R})f_{\ell'}\|_{L^{1,\infty}(G)}   \\
    &\leq \Omega \sum_{\ell'=0}^\infty \Vert f_{\ell'}\|_{L^{1}(G)}=\int_{G}\sum_{\ell'=0}^\infty|\psi_{\ell'}(\mathcal{R})f_{\ell'}(x)|dx\\\
    &=\Vert \{f_\ell\}_{\ell=0}^\infty\Vert_{L^1(G,\ell^1(\mathbb{N}_0))}.
\end{align*}
Now, if $1<p<2,$ there exists $\theta\in (0,1)$ such that $\frac{1}{p}=\frac{\theta}{1}+\frac{1-\theta}{2}.$ In view of \ref{EQprimerita'} and \ref{EQprimerita},  by the  Lions-Peetre vector-valued interpolation theorem we have
\begin{equation}\label{TRE}
    W:L^p(G,\ell^p(\mathbb{N}_0))\rightarrow L^p(G,\ell^p(\mathbb{N}_0)),
\end{equation} extends to a bounded operator for all $1<p\leq 2$.
 Here, we have used that in this vector-valued context,\begin{equation}L^p(G,\ell^p(\mathbb{N}_0))=(L^{1,\infty}(G,\ell^1(\mathbb{N}_0)),L^2(G,\ell^2(\mathbb{N}_0)))_{\theta,p},\end{equation}with the usual notation of real interpolation (see Section \ref{independenceinterpolation} below).
Because $W$ is a symmetric operator on $L^2(G),$ the duality argument allows us to prove that \eqref{TRE} extends to a bounded operator for all $2\leq  p<\infty.$ So, the boundedness of $W$ for all $1<p<\infty$ is proved once that we have proved the estimate  \eqref{theomega}. For this, we observe that in view of the weak (1,1) estimate in the H\"ormander-Mihlin theorem (see Corollary 4.12 of \cite{FR}), we have
\begin{align*}
   &\Omega:= \sup_\ell \Vert \psi_\ell(\mathcal{R})  \Vert_{\mathscr{B}(L^1(G),L^{1,\infty}(G))}
   \lesssim  \sup_{\ell;\alpha\leq [Q/2]+1, t>0 }t^\alpha|\psi_\ell^{(\alpha)}(t)|\\
   &=\sup_{\ell;\alpha\leq [Q/2]+1, t>0 }t^\alpha2^{-\ell\alpha}|\psi^{(\alpha)}(2^{-\ell}t)|\\
    &\lesssim 1,
\end{align*}where in the last line we have used that the compactly supported function $\psi,$ satisfies estimate of the type
\begin{equation}
    |\psi^{(\alpha)}(\lambda)|\leq C_\alpha \lambda^{-\alpha},\,\,\lambda\neq 0,
\end{equation} and so
\begin{align*}
    t^\alpha2^{-\ell\alpha}|\psi^{(\alpha)}(2^{-\ell}t)|\leq C_\alpha t^\alpha2^{-\ell\alpha}(2^{-\ell}t)^{-\alpha}=C_\alpha.
\end{align*}
Now, we claim that 
\begin{equation}\label{thelpofW}
    W:L^p(G,\ell^r(\mathbb{N}_0))\rightarrow L^p(G,\ell^r(\mathbb{N}_0)),\,\,\,1<r<\infty.
\end{equation} To do so, we will prove that 
\begin{equation}\label{weakvectorvalued}
    W:L^{1}(G,\ell^r(\mathbb{N}_0))\rightarrow L^{1,\infty}(G,\ell^r(\mathbb{N}_0)),\,\,\,1<r<\infty.
\end{equation}
extends to a bounded operator together with a interpolation argument coming from \eqref{EQprimerita'}. For the proof of \eqref{weakvectorvalued}, we need to show that there exists a constant $C>0$ independent of $\{f_\ell\}\in L^1(G,\ell^r(\mathbb{N}_0))$ and $t>0,$ such that 
\begin{equation}
    \left|\left\{x\in G:\left(\sum_{\ell=0}^\infty|\psi_\ell(\mathcal{R}) f_\ell(x)|^r   \right)^{\frac{1}{r}}>t \right\}\right|\leq \frac{C}{t}\Vert \{f_\ell\} \Vert_{L^1(G,\ell^r(\mathbb{N}_0)}.
\end{equation} So, fix $\{f_\ell\}\in L^1(G,\ell^r(\mathbb{N}_0))$ and $t>0,$ and let $h(x):= \left(\sum_{\ell=0}^\infty| f_\ell(x)|^r   \right)^{\frac{1}{r}}, $ apply the Calder\'on-Zygmund decomposition Lemma to $h\in L^1(G),$ under the identification $G\simeq \mathbb{R}^n,$  (see e.g. Hebish \cite{Hebish}) in order to obtain a disjoint collection $\{I_j\}_{j=0}^{\infty}$ of disjoint open sets such that
\begin{itemize}
    \item $h(x)\leq t,$ for $a.e.$ $x\in G\setminus \cup_{j\geq 0}I_j,$\\
    
    \item $\sum_{j\geq 0}|I_j|\leq \frac{C}{t}\Vert h\Vert_{L^1(G)},$ and\\ 
    
    \item $t\leq \frac{1}{|I_j|}\int_{I_j}h(x)dx\leq 2t,$ for all $j.$
\end{itemize} Now, we will define a suitable decomposition of $f_\ell,$ for every $\ell\geq 0.$ Recall that   every $I_j$ is diffeomorphic to an open cube on $\mathbb{R}^n,$ that it is bounded, and that  $I_j\subset B(z_j,2R_j),$ where $z_j\in I_j$ (see Hebish \cite{Hebish}).  Let us define, for every $\ell,$ and $x\in I_j,$
\begin{equation}
    g_\ell(x):=\frac{1}{|I_j|}\int\limits_{I_j}f_\ell(y)dy,\,\,\,b_{\ell}(x)=f_\ell(x)-g_{\ell}(x).
\end{equation} and for $x\in G\setminus\cup_{j\geq 0}I_j,$
\begin{equation}
    g_\ell(x)=f_\ell(x),\,\,\, b_\ell(x)=0.
\end{equation} So, for a.e.  $x\in G,$ $f_\ell(x)=g_\ell(x)+b_\ell(x).$
Note that for every $1<r<\infty,$ $\Vert \{g_\ell\}\Vert_{L^r(\ell^r)}^r\leq t^{r-1}\Vert \{f_\ell\} \Vert_{L^1(\ell^r)}, $ indeed for $x\in I_j$, Minkowsky integral inequality gives,
\begin{align*}
    \left(\sum_{\ell=0}^\infty|g_\ell(x)|^r\right)^{\frac{1}{r}} &\leq  \left(\sum_{\ell=0}^\infty\left|   \frac{1}{|I_j|}\int\limits_{I_j}f_\ell(y)dy \right|^r\right)^{\frac{1}{r}}\leq \frac{1}{|I_j|}\int\limits_{I_j}\left(\sum_{\ell=0}^\infty|   f_\ell(y) |^r\right)^{\frac{1}{r}}dy\\
    &\leq 2t.
\end{align*}
Consequently, we have
\begin{align*}
    \Vert \{g_\ell\}\Vert_{L^r(\ell^r)}^r&=\int\limits_{G}\sum_{\ell=0}^\infty|g_\ell(x)|^rdx=\sum_{j}\int\limits_{I_j}\sum_{\ell=0}^\infty|g_\ell(x)|^rdx+\int\limits_{G\setminus \cup_{j}I_j}\sum_{\ell=0}^\infty|g_\ell(x)|^rdx\\
    &=\sum_{j}\int\limits_{I_j}\sum_{\ell=0}^\infty|g_\ell(x)|^rdx+\int\limits_{G\setminus \cup_{j}I_j}\sum_{\ell=0}^\infty|f_\ell(x)|^rdx\\
    &\leq \sum_{j}\int\limits_{I_j}(2t)^{r}dx+\int\limits_{G\setminus \cup_{j}I_j}h(x)^rdx\\
    &\lesssim t^{r}\sum_{j}|I_j|+\int\limits_{G\setminus \cup_{j}I_j}h(x)^{r-1}h(x)dx\\
    &\leq t^{r}\times \frac{C}{t}\Vert h\Vert_{L^1(G)}+t^{r-1}\int\limits_{G\setminus \cup_{j}I_j}h(x)dx\lesssim t^{r-1}\Vert h\Vert_{L^1(G)}\\
    &=t^{r-1}\Vert \{f_\ell\} \Vert_{L^1(\ell^r)}.
\end{align*}
Now, by using   the Minkowski and the Chebyshev inequality, we obtain 
\begin{align*}
    & \left|\left\{x\in G:\left(\sum_{\ell=0}^\infty|\psi_\ell(\mathcal{R}) f_\ell(x)|^r   \right)^{\frac{1}{r}}>t \right\}\right|\\
    &\leq  \left|\left\{x\in G:\left(\sum_{\ell=0}^\infty|\psi_\ell(\mathcal{R}) g_\ell(x)|^r   \right)^{\frac{1}{r}}>\frac{t}{2} \right\}\right|+ \left|\left\{x\in G:\left(\sum_{\ell=0}^\infty|\psi_\ell(\mathcal{R}) b_\ell(x)|^r   \right)^{\frac{1}{r}}>\frac{t}{2} \right\}\right|\\
    &\leq \frac{2^r}{t^r}\int_{G}\sum_{\ell=0}^\infty|\psi_\ell(\mathcal{R}) g_\ell(x)|^rdx+ \left|\left\{x\in G:\left(\sum_{\ell=0}^\infty|\psi_\ell(\mathcal{R}) b_\ell(x)|^r   \right)^{\frac{1}{r}}>\frac{t}{2} \right\}\right|.
\end{align*} In view of \eqref{TRE}, $W:L^r(G,\ell^r(\mathbb{N}_0))\rightarrow L^r(G,\ell^r(\mathbb{N}_0)),$ extends to a bounded operator and 
\begin{equation}
    \int_{G}\sum_{\ell=0}^\infty|\psi_\ell(\mathcal{R}) g_\ell(x)|^rdx=\Vert W\{g_\ell\}\Vert_{L^r(\ell^r)}^r\lesssim  \Vert \{g_\ell\}\Vert_{L^r(\ell^r)}^r\leq t^{r-1}\Vert \{f_\ell\}\Vert_{L^1(\ell^r)}.
\end{equation}
Consequently,
 \begin{align*}
    & \left|\left\{x\in G:\left(\sum_{\ell=0}^\infty|\psi_\ell(\mathcal{R}) f_\ell(x)|^r   \right)^{\frac{1}{r}}>t \right\}\right|\\
    &\leq  \frac{1}{t}\Vert \{f_\ell\}\Vert_{L^1(\ell^r)}+ \left|\left\{x\in G:\left(\sum_{\ell=0}^\infty|\psi_\ell(\mathcal{R}) b_\ell(x)|^r   \right)^{\frac{1}{r}}>\frac{t}{2} \right\}\right|.
\end{align*}
Now, we only need to prove that
\begin{equation}
     \left|\left\{x\in G:\left(\sum_{\ell=0}^\infty|\psi_\ell(\mathcal{R}) b_\ell(x)|^r   \right)^{\frac{1}{r}}>\frac{t}{2} \right\}\right|\lesssim  \frac{1}{t}\Vert \{f_\ell\}\Vert_{L^1(\ell^r)}.   
\end{equation} 

Taking into account that $b_\ell\equiv 0$ on $G\setminus \cup_j I_j,$ we have that
\begin{equation}
    b_\ell=\sum_{k}b_{\ell,k},\,\,\,b_{\ell,k}(x)=b_{\ell}(x)\cdot 1_{I_k}(x).
\end{equation} Let us assume that $I_{j}^*$ is a open set, such that $|I_{j}^*|=K|I_{j}|$ for some $K>0,$ and $\textnormal{dist}(\partial I_{j}^*,\partial I_{j})\geq 4c\textnormal{dist}(\partial I_{j},e_{G}),$ where $c$ is defined in \eqref{EQestimateweak(1,1)} and $e_{G}$ is the identity element of $G$.  So, by the Minkowski inequality we have,
\begin{align*}
     & \left|\left\{x\in G:\left(\sum_{\ell=0}^\infty|\psi_\ell(\mathcal{R}) b_{\ell}(x)|^r   \right)^{\frac{1}{r}}>\frac{t}{2} \right\}\right|\\
      &=\left|\left\{x\in \cup_j I_j^*:\left(\sum_{\ell=0}^\infty|\psi_\ell(\mathcal{R}) b_\ell(x)|^r   \right)^{\frac{1}{r}}>\frac{t}{2} \right\}\right|+\left|\left\{x\in G\setminus  \cup_j I_j^*:\left(\sum_{\ell=0}^\infty|\psi_\ell(\mathcal{R}) b_\ell(x)|^r   \right)^{\frac{1}{r}}>\frac{t}{2} \right\}\right|\\
      &\leq \left|\left\{x\in G:x\in \cup_j I_j^* \right\}\right|+\left|\left\{x\in G\setminus  \cup_j I_j^*:\left(\sum_{\ell=0}^\infty|\psi_\ell(\mathcal{R}) b_\ell(x)|^r   \right)^{\frac{1}{r}}>\frac{t}{2} \right\}\right|.
      \end{align*} Since $$  \left|\left\{x\in G:x\in \cup_j I_j^* \right\}\right| \leq \sum_{j}|I_j^*|,  $$ we have
      \begin{align*}  & \left|\left\{x\in G:\left(\sum_{\ell=0}^\infty|\psi_\ell(\mathcal{R}) b_\ell(x)|^2   \right)^{\frac{1}{2}}>\frac{t}{2} \right\}\right|\\
      &\leq\sum_{j}|I_j^*|+\left|\left\{x\in G\setminus  \cup_j I_j^*:\left(\sum_{\ell=0}^\infty|\psi_\ell(\mathcal{R}) b_\ell(x)|^2   \right)^{\frac{1}{2}}>\frac{t}{2} \right\}\right|\\
      &=K\sum_{j}|I_j|+\left|\left\{x\in G\setminus  \cup_j I_j^*:\left(\sum_{\ell=0}^\infty|\psi_\ell(\mathcal{R}) b_\ell(x)|^2   \right)^{\frac{1}{2}}>\frac{t}{2} \right\}\right|\\
      &\leq \frac{CK}{t}\Vert f\Vert_{L^1(G,\ell^r)}+\left|\left\{x\in G\setminus  \cup_j I_j^*:\left(\sum_{\ell=0}^\infty|\psi_\ell(\mathcal{R}) b_\ell(x)|^2   \right)^{\frac{1}{2}}>\frac{t}{2} \right\}\right|.
  \end{align*} Observe that  the Chebyshev inequality implies
  \begin{align*}
     & \left|\left\{x\in G\setminus  \cup_j I_j^*:\left(\sum_{\ell=0}^\infty|\psi_\ell(\mathcal{R}) b_\ell(x)|^r  \right)^{\frac{1}{r}}>\frac{t}{2} \right\}\right|\\
     &\leq\frac{2}{t}\int\limits_{ G\setminus  \cup_j I_j^*} \left(\sum_{\ell=0}^\infty|\psi_\ell(\mathcal{R}) b_\ell(x)|^r   \right)^{\frac{1}{r}}dx\\
     &=\frac{2}{t}\int\limits_{ G\setminus  \cup_j I_j^*} \left(\sum_{\ell=0}^\infty\left|\left(\psi_\ell(\mathcal{R})\left(\sum_{k} b_{\ell,k}\right)   \right)(x)\right|^r\right)^{\frac{1}{r}}dx\\
     &=\frac{2}{t}\int\limits_{ G\setminus  \cup_j I_j^*} \Vert\{(\psi_\ell(\mathcal{R})(\sum_{k} b_{\ell,k}) (x)\}_{\ell=0}^\infty\Vert_{\ell^r(\mathbb{N}_0)} dx\\
     &=\frac{2}{t}\int\limits_{ G\setminus  \cup_j I_j^*} \Vert\{\sum_{k} (\psi_\ell(\mathcal{R})b_{\ell,k})(x) \}_{\ell=0}^\infty\Vert_{\ell^r(\mathbb{N}_0)} dx\\
     &\leq \frac{2}{t}\sum_{k}\int\limits_{ G\setminus  \cup_j I_j^*} \left(\sum_{\ell=0}^\infty\left|\left(\psi_\ell(\mathcal{R})b_{\ell,k}   \right)(x)\right|^r\right)^{\frac{1}{r}}dx.
  \end{align*}
Now, if $\kappa_\ell$ is the right convolution Calder\'on-Zygmund kernel of     $\psi_\ell(\mathcal{R}) ,$ (see Remark \ref{theremarkof}), and by using that $\int_{I_k}b_{k,\ell}(y)dy=0,$ we have that
\begin{align*}
    \left(\sum_{\ell=0}^\infty\left|\left(\psi_\ell(\mathcal{R})b_{\ell,k}   \right)(x)\right|^r\right)^{\frac{1}{r}}&=\left(\sum_{\ell=0}^\infty\left|b_{\ell,k}\ast \kappa_{\ell}(x)   \right|^r\right)^{\frac{1}{r}}\\
    &=\left(\sum_{\ell=0}^\infty\left| \int\limits_{I_k}\kappa_\ell(y^{-1}x)b_{\ell,k}(y)dy-\kappa_{\ell}(x)\int\limits_{I_k}b_{\ell,k}(y)dy \right|^r\right)^{\frac{1}{r}}\\
       &=\left(\sum_{\ell=0}^\infty\left| \int\limits_{I_k}(\kappa_\ell(y^{-1}x)-\kappa_{\ell}(x))b_{\ell,k}(y)dy \right|^r\right)^{\frac{1}{r}}.
\end{align*} Now, we will proceed as follows. By using that
$
    |b_{\ell,k}(y)|^r\leq \sum_{\ell'=0}^{\infty}|b_{\ell',k}(y)|^r,
$ we have, by an application of the Minkowsky integral inequality,
\begin{align*}
    &\left(\sum_{\ell=0}^\infty\left|\left(\psi_\ell(\mathcal{R})b_{\ell,k}   \right)(x)\right|^r\right)^{\frac{1}{r}}= \left(\sum_{\ell=0}^\infty\left| \int\limits_{I_k}(\kappa_\ell(y^{-1}x)-\kappa_{\ell}(x))b_{\ell,k}(y)dy \right|^r\right)^{\frac{1}{r}}\\
    &\leq \int\limits_{I_k}\left(   \sum_{\ell=0}^\infty |\kappa_\ell(y^{-1}x)-\kappa_{\ell}(x)|^r|b_{\ell,k}(y)|^r\right)^{\frac{1}{r}}dy\\
    &\leq  \int\limits_{I_k}\left(   \sum_{\ell'=0}^{\infty}|b_{\ell',k}(y)|^r\right)^{\frac{1}{r}}\left(   \sum_{\ell=0}^\infty |\kappa_\ell(xy^{-1})-\kappa_{\ell}(x)|^r\right)^{\frac{1}{r}}dy.
\end{align*} 
Consequently, we deduce,
\begin{align*}
   & \frac{2}{t}\sum_{k}\int\limits_{ G\setminus  \cup_j I_j^*} \left(\sum_{\ell=0}^\infty\left|\left(\psi_\ell(\mathcal{R})b_{\ell,k}   \right)(x)\right|^r\right)^{\frac{1}{r}}dx\\
   &\leq \frac{2}{t} \sum_{k}\int\limits_{ G\setminus  \cup_j I_j^*} \int\limits_{I_k}\left(   \sum_{\ell'=0}^{\infty}|b_{\ell',k}(y)|^r\right)^{\frac{1}{r}}\left(   \sum_{\ell=0}^\infty |\kappa_\ell(y^{-1}x)-\kappa_{\ell}(x)|^r\right)^{\frac{1}{r}}dy                   dx\\
   &= \frac{2}{t} \sum_{k} \int\limits_{I_k}  \int\limits_{ G\setminus  \cup_j I_j^*} \left(   \sum_{\ell'=0}^{\infty}|b_{\ell',k}(y)|^r\right)^{\frac{1}{r}}\left(   \sum_{\ell=0}^\infty |\kappa_\ell(y^{-1}x)-\kappa_{\ell}(x)|^r\right)^{\frac{1}{r}}dx                  dy\\
    &= \frac{2}{t} \sum_{k} \int\limits_{I_k}  \left(   \sum_{\ell'=0}^{\infty}|b_{\ell',k}(y)|^r\right)^{\frac{1}{r}}  \int\limits_{ G\setminus  \cup_j I_j^*} \left(   \sum_{\ell=0}^\infty |\kappa_\ell(y^{-1}x)-\kappa_{\ell}(x)|^r\right)^{\frac{1}{r}}dxdy.
\end{align*}
Because $\textnormal{dist}(\partial I_{j}^*,\partial I_{j})\geq 4c\textnormal{dist}(\partial I_{j},e_{G}),$ for $x\in G\setminus \cup_jI_j^*,$   for $y\in I_{k},$ the analysis in \eqref{Estimateforboundaries}, shows that  $4c|y|=4c\times \textnormal{dist}(y,e_G) \lesssim  \textnormal{dist}(\partial I_{k}^*,\partial I_{k})  \leq|x|.$ So,  $$\{x\in G: x\in G\setminus \cup_jI_j^*\}\subset\{x\in G:\textnormal{ for all } z\in {I_k},\,\,\, 4c|z|\leq |x| \}.$$ Now, from Remark \ref{theremarkof}, the identity \eqref{EQestimateweak(1,1)}, and the estimate \eqref{ThLiWeak}, we deduce \begin{align*}
  & \int\limits_{ G\setminus  \cup_j I_j^*} \left(   \sum_{\ell=0}^\infty |\kappa_\ell(y^{-1}x)-\kappa_{\ell}(x)|^r\right)^{\frac{1}{r}}dx \leq \int\limits_{ G\setminus  \cup_j I_j^*} \sum_{\ell=0}^\infty |\kappa_\ell(y^{-1}x)-\kappa_{\ell}(x)|dx\\
   &\leq  \sum_{\ell=0}^\infty \int\limits_{ G\setminus  \cup_j I_j^*}|\kappa_\ell(y^{-1}x)-\kappa_{\ell}(x)|dx\\\
   &\leq   \sum_{\ell=0}^\infty \int\limits_{|x| >4c|y|}|2^{-\ell Q}\kappa_\ell(2^{-\ell }\cdot y^{-1}x)-2^{-\ell Q}\kappa_{\ell}(2^{-\ell }\cdot x)|dx\\
   &\lesssim  \sum_{\ell=0} 2^{-\ell \varepsilon_0}=O(1).
\end{align*}Thus, we have proved that
\begin{align*}
      &\left|\left\{x\in G:\left(\sum_{\ell=0}^\infty|\psi_\ell(\mathcal{R}) b_\ell(x)|^r   \right)^{\frac{1}{r}}>\frac{t}{2} \right\}\right|\lesssim  \frac{2}{t} \sum_{k} \int\limits_{I_k}  \left(   \sum_{\ell'=0}^{\infty}|b_{\ell',k}(y)|^r\right)^{\frac{1}{r}}dy \\
      &=\frac{2}{t}  \int\limits_{\cup_{k}I_{k}}  \left(   \sum_{\ell'=0}^{\infty}|b_{\ell'}(y)|^r\right)^{\frac{1}{r}}dy \\
      &\lesssim \frac{1}{t}\Vert \{f_\ell\}\Vert_{L^1(\ell^r)}. 
\end{align*} This, the proof of the weak (1,1) inequality is complete and we have that \begin{equation}\label{weakvectorvalued2}
    W:L^{1}(G,\ell^r(\mathbb{N}_0))\rightarrow L^{1,\infty}(G,\ell^r(\mathbb{N}_0)),\,\,\,1<r<\infty,
\end{equation} extends to a bounded operator. As an application of the vector-valued Lions-Peetre interpolation theorem between \eqref{EQprimerita'} and \eqref{weakvectorvalued2} we obtain that $W$ in \eqref{thelpofW} extends to a bounded operator and together with the duality argument we complete the proof.
\end{proof}

\section{Homogeneous and inhomogeneous Besov spaces}\label{Besov}

Let $\mathcal{R}$ be a (left-invariant) positive Rockland operator on a graded Lie group $G.$ In order to define the family of Besov spaces on $G,$ let us assume that $\mathcal{R}$ is homogeneous of degree $\nu>0$ and let us fix a dyadic
decomposition of its spectrum: we choose a function $\psi\in C^{\infty}_{0}(\mathbb{R})$  supported
in  $[1/4, 2],$ $\psi=1$ on $[1/2,1].$ Denote by $\psi_{l}$ the function $\psi_{l}(t)=\psi(2^{-l}t),$ $t\in \mathbb{R}.$ For some smooth  compactly supported function $\psi_{0}$ we have
\begin{eqnarray}\label{deco1}
\sum_{l\in\mathbb{N}_{0}}\psi_{l}(\lambda)=1,\,\,\, \text{for every}\,\,\, \lambda>0.
\end{eqnarray}
With notations above we define (left) Besov spaces associated to a (left-invariant) positive Rockland operator as follows.
\begin{definition}\label{nonhomogeneaous}
Let $r\in \mathbb{R},$ $0<p<\infty$ and $0< q\leq \infty.$ The homogeneous Besov space $\dot{B}^{r}_{p,q,\psi,\mathcal{R}}(G)$ associated to $(\mathcal{R},(\psi_l)_{l})$ consists of those $f\in \mathcal{D}'(G)$ satisfying
\begin{equation}
\Vert f\Vert_{\dot{B}^{r}_{p,q,\psi,\mathcal{R}}(G)}:=\left(   \sum_{l\in\mathbb{N}_{0}}2^{  {\frac{l}{\nu}} rq}\Vert \psi_{l}(\mathcal{R})f\Vert^{q}_{L^{p}{(G)}}\right)^{\frac{1}{q}}<\infty,
\end{equation}
for $0<q<\infty,$ and for $q=\infty,$
\begin{equation}
\Vert f\Vert_{\dot{B}^{r}_{p,\infty,\psi,\mathcal{R}}(G)}:=\sup_{l\in\mathbb{N}_{0}}2^{ {\frac{l}{\nu}}  r}\Vert \psi_{l}(\mathcal{R})f\Vert_{L^{p}{(G)}}<\infty. 
\end{equation}
Analogously, the inhomogeneous Besov space ${B}^{r}_{p,q,\psi,\mathcal{R}}(G)$ is defined as the space of distributions $f\in \mathcal{D}'(G)$ satisfying 
\begin{equation}
\Vert f\Vert_{{B}^{r}_{p,q,\psi,\mathcal{R}}(G)}:=\left(   \sum_{l\in\mathbb{N}_{0}}2^{  \frac{l}{\nu}  rq}\Vert \psi_{l}(I+\mathcal{R})f\Vert^{q}_{L^{p}{(G)}}\right)^{\frac{1}{q}}<\infty,
\end{equation}
if $0<q<\infty$ and, for $q=\infty,$
\begin{equation}
\Vert f\Vert_{{B}^{r}_{p,\infty,\psi,\mathcal{R}}(G)}:=\sup_{l\in\mathbb{N}_{0}}2^{  \frac{l}{\nu}  r}\Vert \psi_{l}(I+\mathcal{R})f\Vert_{L^{p}{(G)}}<\infty. 
\end{equation}
\end{definition}
Homogeneous and inhomogeneous Besov spaces
do not depend on a particular choice of a positive Rockland operator $\mathcal{R}$ and of the sequence of smooth functions $\psi_{l}.$ We will prove this fact in the following section (see Theorem \ref{independence}). Now, we prove the following embedding properties of Besov spaces. We use the simplified notation motivated by  Theorem \ref{independence}, $$(\dot{B}^{r}_{p,q}(G),\Vert\cdot \Vert_{\dot{B}^{r}_{p,q}(G)} )=(\dot{B}^{r}_{p,q,\psi,\mathcal{R}}(G),\Vert\cdot \Vert_{\dot{B}^{r}_{p,q,\psi,\mathcal{R} }(G)} )$$ and $$({B}^{r}_{p,q}(G),\Vert\cdot \Vert_{{B}^{r}_{p,q}(G)} )=({B}^{r}_{p,q,\psi,\mathcal{R}}(G),\Vert\cdot \Vert_{{B}^{r}_{p,q,\psi,\mathcal{R} }(G)} ).$$
For Sobolev spaces $H^{r,p}(G)$ and $\dot{H}^{r,p}(G)$ and their properties we refer to \cite[Section 4]{FR2}.

We also note that similar results would hold if we chose right-invariant (instead of left-invariant) Rockland operator in the definition of Besov spaces, see Remark \ref{REM:right}.

\begin{theorem}\label{Emb1}
Let $G$ be a graded Lie group of homogeneous dimension $Q$ and let $r\in \mathbb{R}.$ Then 
\begin{itemize}
\item[(1)] $\dot{B}^{r+\varepsilon}_{p,q_1}(G)\hookrightarrow \dot{B}^{r}_{p,q_1}(G)\hookrightarrow \dot{B}^{r}_{p,q_2}(G)\hookrightarrow \dot{B}^{r}_{p,\infty}(G),$  $\varepsilon>0,$ $0<p\leq \infty,$ $0<q_{1}\leq q_2\leq \infty.$\\
\item[(2)]  $\dot{B}^{r+\varepsilon}_{p,q_1}(G)\hookrightarrow \dot{B}^{r}_{p,q_2}(G)$, $\varepsilon>0,$ $0<p\leq \infty,$ $1\leq q_2<q_1<\infty.$\\
\item[(3)]  $\dot{B}^{r_1}_{p_1,q}(G)\hookrightarrow \dot{B}^{r_2}_{p_2,q}(G),$  $1\leq p_1\leq p_2\leq \infty,$ $0<q<\infty,$ $r_{1}\in\mathbb{R}$ and $r_2=r_1- Q(\frac{1}{p_1}-\frac{1}{p_2}).$\\
\item[(4)] $\dot{H}^{r}(G)=\dot{B}^{r}_{2,2}(G)$ and $\dot{B}^{r}_{p,p}(G)\hookrightarrow \dot{H}^{r,p}(G)\hookrightarrow \dot{B}^{r}_{p,2}(G),$ $1<p\leq 2.$\\
\item[(5)] $\dot{B}^{r}_{p,1}(G)\hookrightarrow L^{q}(G), $ $1\leq p\leq q\leq \infty,$ $r= Q(\frac{1}{p}-\frac{1}{q}).$
\end{itemize}
\end{theorem}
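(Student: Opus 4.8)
The plan is to prove all five embeddings band-by-band: for each dyadic piece $\psi_{l}(\mathcal{R})f$ I establish a single inequality and then reassemble by taking the appropriate $\ell^{q}$-norm in $l$. The two genuinely analytic inputs are Theorem \ref{Nikolskii}, which controls the change of Lebesgue exponent on each band, and Theorem \ref{LPT} together with the Plancherel identity, which relate the square function to the $L^{p}$-norm. Items (1) and (2) need nothing beyond sequence-space inequalities, items (3) and (5) are applications of the Nikolskii inequality, and item (4) is where the Littlewood-Paley theorem enters and is the main obstacle.

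For (1), since $l\ge 0$ and $\varepsilon>0$ one has $2^{\frac{l}{\nu}rq_{1}}\le 2^{\frac{l}{\nu}(r+\varepsilon)q_{1}}$ term by term, which gives the first inclusion, while the remaining two are the monotonicity $\Vert\cdot\Vert_{\ell^{q_{1}}}\ge\Vert\cdot\Vert_{\ell^{q_{2}}}\ge\Vert\cdot\Vert_{\ell^{\infty}}$ for $q_{1}\le q_{2}$ applied to the sequence $\big(2^{\frac{l}{\nu}r}\Vert\psi_{l}(\mathcal{R})f\Vert_{L^{p}}\big)_{l}$. For (2), writing $b_{l}=2^{\frac{l}{\nu}(r+\varepsilon)}\Vert\psi_{l}(\mathcal{R})f\Vert_{L^{p}}$, I would apply H\"older's inequality in $l$ with exponents $q_{1}/q_{2}>1$ and its conjugate to
\[
\sum_{l}\big(2^{-\frac{l\varepsilon}{\nu}}b_{l}\big)^{q_{2}}\le\Big(\sum_{l}b_{l}^{q_{1}}\Big)^{q_{2}/q_{1}}\Big(\sum_{l}2^{-\frac{l\varepsilon}{\nu}q_{2}\frac{q_{1}}{q_{1}-q_{2}}}\Big)^{\frac{q_{1}-q_{2}}{q_{1}}};
\]
the last factor is a convergent geometric series precisely because its exponent is negative, which is where the smoothness surplus $\varepsilon>0$ is spent.

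For (3) and (5) the key observation is that $\psi_{l}$ is supported in $[2^{l-2},2^{l+1}]$, so $\psi_{l}(\mathcal{R})f$ has $\mathcal{R}$-spectrum in $[0,2^{l+1}]$ and hence $\psi_{l}(\mathcal{R})f=T_{2^{l+1}}(\psi_{l}(\mathcal{R})f)$ in the notation of \eqref{tl}. Theorem \ref{Nikolskii} with $L=2^{l+1}$ then yields $\Vert\psi_{l}(\mathcal{R})f\Vert_{L^{q}}\lesssim 2^{\frac{l}{\nu}Q(\frac{1}{p}-\frac{1}{q})}\Vert\psi_{l}(\mathcal{R})f\Vert_{L^{p}}$ with constant independent of $l$. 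For (3), taking $p=p_{1}$, $q=p_{2}$, multiplying by $2^{\frac{l}{\nu}r_{2}}$, and using $r_{2}+Q(\frac{1}{p_{1}}-\frac{1}{p_{2}})=r_{1}$ gives $2^{\frac{l}{\nu}r_{2}}\Vert\psi_{l}(\mathcal{R})f\Vert_{L^{p_{2}}}\lesssim 2^{\frac{l}{\nu}r_{1}}\Vert\psi_{l}(\mathcal{R})f\Vert_{L^{p_{1}}}$, and I take $\ell^{q}$-norms in $l$. For (5), with $r=Q(\frac{1}{p}-\frac{1}{q})$ the same estimate gives $\Vert\psi_{l}(\mathcal{R})f\Vert_{L^{q}}\lesssim 2^{\frac{l}{\nu}r}\Vert\psi_{l}(\mathcal{R})f\Vert_{L^{p}}$; summing the triangle inequality $\Vert f\Vert_{L^{q}}\le\sum_{l}\Vert\psi_{l}(\mathcal{R})f\Vert_{L^{q}}$ bounds $\Vert f\Vert_{L^{q}}$ by $\Vert f\Vert_{\dot{B}^{r}_{p,1}}$, whose finiteness also guarantees that $\sum_{l}\psi_{l}(\mathcal{R})f$ converges absolutely in $L^{q}$ to $f$.

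Item (4) is the delicate one. For $\dot{H}^{r}(G)=\dot{B}^{r}_{2,2}(G)$ I would use Plancherel: writing everything on $\widehat{G}$, Tonelli lets me interchange the sum over $l$ with the integral over $\widehat{G}$, and since the $\psi_{l}(\pi(\mathcal{R}))$ are commuting functions of $\pi(\mathcal{R})$ one gets $\sum_{l}2^{\frac{2l}{\nu}r}\Vert\psi_{l}(\pi(\mathcal{R}))\widehat{f}(\pi)\Vert_{HS}^{2}=\Vert m(\pi(\mathcal{R}))^{1/2}\widehat{f}(\pi)\Vert_{HS}^{2}$ with $m(\lambda)=\sum_{l}2^{\frac{2l}{\nu}r}\psi_{l}(\lambda)^{2}$; the finite overlap of the $\psi_{l}$ and $\lambda\simeq 2^{l}$ on $\mathrm{supp}\,\psi_{l}$ give $m(\lambda)\simeq\lambda^{2r/\nu}$, whence $\Vert f\Vert_{\dot{B}^{r}_{2,2}}\simeq\Vert\mathcal{R}^{r/\nu}f\Vert_{L^{2}}=\Vert f\Vert_{\dot{H}^{r}}$. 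For the chain $\dot{B}^{r}_{p,p}\hookrightarrow\dot{H}^{r,p}\hookrightarrow\dot{B}^{r}_{p,2}$ with $1<p\le 2$ I would first reduce to $r=0$, using that $\mathcal{R}^{r/\nu}$ is an isomorphism $\dot{B}^{r}_{p,q}\to\dot{B}^{0}_{p,q}$ because $\psi_{l}(\mathcal{R})\mathcal{R}^{r/\nu}$ equals $2^{\frac{l}{\nu}r}$ times a band multiplier whose symbol is a dyadic dilate of a fixed compactly supported smooth function, hence uniformly $L^{p}$-bounded by H\"ormander-Mihlin exactly as in the proof of Theorem \ref{LPT}. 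With $r=0$, the upper embedding follows from Theorem \ref{LPT} together with $\Vert\cdot\Vert_{\ell^{2}(L^{p})}\le\Vert\cdot\Vert_{L^{p}(\ell^{2})}$ (valid for $p\le 2$), and the lower embedding from the lower Littlewood-Paley bound together with $\Vert\cdot\Vert_{L^{p}(\ell^{2})}\le\Vert\cdot\Vert_{L^{p}(\ell^{p})}=\Vert\cdot\Vert_{\ell^{p}(L^{p})}$ (again $p\le 2$). The main obstacle is exactly this step: tracking the correct directions of the mixed-norm (Minkowski) inequalities and justifying the $r\to 0$ reduction, since these are what force the restriction $1<p\le 2$ and the appearance of the two different second indices $p$ and $2$.
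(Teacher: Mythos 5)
Your proposal is correct, and for items (1), (2), (3) and (5) it coincides with the paper's argument: the same termwise monotonicity of the weights, the same H\"older step with exponents $q_1/q_2$ producing a convergent geometric series, and the same band-by-band application of Theorem \ref{Nikolskii} to $\psi_l(\mathcal{R})f=T_{2^{l+1}}(\psi_l(\mathcal{R})f)$ followed by an $\ell^q$ (resp.\ $\ell^1$) resummation. Your explicit remark that $\psi_l(\mathcal{R})f$ lies in the range of $T_{2^{l+1}}$, so that the Nikolskii constant is independent of $l$ after the rescaling $L=2^{l+1}$, is exactly the point the paper uses implicitly.

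Where you genuinely diverge is item (4). The paper keeps the smoothness index $r$ inline: it writes $\mathcal{R}^{r/\nu}f=\sum_s\int_{2^s}^{2^{s+1}}\lambda^{r/\nu}\,dE(\lambda)f$, extracts the factor $2^{sr/\nu}$ from each spectral band, and for the embedding $\dot{B}^r_{p,p}\hookrightarrow\dot{H}^{r,p}$ invokes the inequality $\Vert\sum_s g_s\Vert_{L^p}^p\le\sum_s\Vert g_s\Vert_{L^p}^p$, which as stated is only valid for $p\le 1$ and is not justified in the range $1<p\le 2$; for the embedding $\dot{H}^{r,p}\hookrightarrow\dot{B}^r_{p,2}$ it uses Minkowski's integral inequality with exponent $2/p\ge 1$ and then Theorem \ref{LPT}, which is sound. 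Your route --- first establishing that $\mathcal{R}^{r/\nu}$ is an isomorphism $\dot{B}^r_{p,q}\to\dot{B}^0_{p,q}$ (uniform $L^p$-bounds for the dilated band multipliers, via the same H\"ormander--Mihlin scaling as in the proof of Theorem \ref{LPT}), and then at $r=0$ combining the two Littlewood--Paley bounds with the pointwise inclusion $\ell^p\hookrightarrow\ell^2$ and the mixed-norm inequality $\Vert\cdot\Vert_{\ell^2(L^p)}\le\Vert\cdot\Vert_{L^p(\ell^2)}$ for $p\le 2$ --- proves both halves of the chain by inequalities that are actually valid in the stated range, at the cost of having to justify the lifting isomorphism (which requires the finite-overlap identity $\psi_l=\psi_l(\psi_{l-1}+\psi_l+\psi_{l+1})$ to pass between $\tilde\psi(2^{-l}\mathcal{R})f$ and $\psi_l(\mathcal{R})f$; you should make that one line explicit). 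Your direct Plancherel computation of $\dot{H}^r=\dot{B}^r_{2,2}$, with $m(\lambda)=\sum_l 2^{2lr/\nu}\psi_l(\lambda)^2\simeq\lambda^{2r/\nu}$, is also a small improvement over the paper, which obtains this identity only as the $p=2$ degeneration of the chain. In short: same skeleton, but your treatment of (4) is the more robust of the two.
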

\begin{proof}
For the proof of $(1)$ we observe that
\begin{align*}
\Vert f \Vert_{\dot{B}^{r}_{p,\infty}}&=\sup_{s\in\mathbb{N}_0}2^{r \frac{s}{\nu}  }\Vert \psi_{s}(\mathcal{R}) f\Vert_{L^{p}}\leq \Vert \{ 2^{ \frac{s}{\nu}  r}\Vert \psi_{s}(\mathcal{R})f\Vert_{L^{p}} \}_{s\in\mathbb{N}_0} \Vert_{l^{q_2}(\mathbb{N}_0)}\equiv\Vert f \Vert_{\dot{B}^{r}_{p,q_2}}\\
&\leq \Vert \{ 2^{ \frac{s}{\nu} r}\Vert \psi_{s}(\mathcal{R})f\Vert_{L^{p}} \}_{s\in\mathbb{N}_0} \Vert_{l^{q_1}(\mathbb{N}_0)}\equiv \Vert f \Vert_{\dot{B}^{r}_{p,q_1}}\\
&\leq \Vert \{ 2^{ \frac{s}{\nu}(r+\varepsilon)} \Vert \psi_{s}(\mathcal{R}^{\frac{1}{\nu}})f\Vert_{L^{p}} \}_{s\in\mathbb{N}_0} \Vert_{l^{q_1}(\mathbb{N}_0)}\\&\equiv\Vert f \Vert_{\dot{B}^{r+\varepsilon}_{p,q_1}}.\\
\end{align*}
For the proof of $(2)$ we use H\"older inequality as follows,
\begin{align*}
\Vert f \Vert_{\dot{B}^{r}_{p,q_2}}&=\Vert \{ 2^{ \frac{s}{\nu} r}\Vert \psi_{s}(\mathcal{R})f\Vert_{L^{p}} \}_{s\in\mathbb{N}_0} \Vert_{l^{q_2}(\mathbb{N}_0)}\\
&=\Vert \{ 2^{  \frac{s}{\nu} (r+\varepsilon)-\frac{s}{\nu}\varepsilon}\Vert \psi_{s}(\mathcal{R})f\Vert_{L^{p}} \}_{s\in\mathbb{N}_0} \Vert_{l^{q_2}(\mathbb{N}_0)}\\
&\leq \Vert \{ 2^{   \frac{s}{\nu}(r+\varepsilon)q_2}\Vert \psi_{s}(\mathcal{R})f\Vert^{q_2}_{L^{p}} \}_{s\in\mathbb{N}_0} \Vert^{\frac{1}{q_2}}_{l^{\frac{q_1}{q_2}}(\mathbb{N}_0)}[\sum_{s\in\mathbb{N}_0}2^{ -\frac{ \frac{s}{\nu}\varepsilon q_2q_1 }{q_1-q_2  }    }]^{\frac{1}{q_1}-\frac{1}{q_2}}\\
&\lesssim \Vert f \Vert_{\dot{B}^{r+\varepsilon}_{p,q_1}}.
\end{align*}
In order to prove $(3)$ we use Nikolskii inequality from Theorem \ref{Nikolskii}. So, by  taking into account the estimate \begin{equation}\Vert \psi_{s}(\mathcal{R})f\Vert_{L^{p_2}}\leq C2^{s \frac{Q}{\nu}(\frac{1}{p_1}-\frac{1}{p_2})}\Vert \psi_{s}(\mathcal{R})f\Vert_{L^{p_1}},\end{equation} 
we deduce
$$ \left(   \sum_{s\in\mathbb{N}_{0}}2^{   \frac{s}{\nu}r_2q}\Vert \psi_{s}(\mathcal{R})f\Vert^{q}_{L^{p_2}{(G)}}\right)^{\frac{1}{q}}\lesssim \left(   \sum_{s\in\mathbb{N}_{0}}2^{  \frac{s}{\nu}[r_{2}+ Q(\frac{1}{p_1}-\frac{1}{p_2}) ]q}\Vert \psi_{s}(\mathcal{R})f\Vert^{q}_{L^{p_1}{(G)}}\right)^{\frac{1}{q}} .$$
Now we will prove $(4),$ that is $\dot{B}^{r}_{p,p}(G)\hookrightarrow \dot{H}^{r,p}(G)\hookrightarrow \dot{B}^{r}_{p,2}(G),$ for $1<p\leq 2.$ In fact, we have
\begin{align*}
\Vert f \Vert_{\dot{H}^{r,p}}^{p}&\equiv\Vert \mathcal{R}^{\frac{r}{\nu}}f\Vert^{p}_{L^{p}} =\Vert  \int\limits_{0}^{\infty}\lambda^{\frac{r}{\nu}}dE(\lambda)f   \Vert_{L^p}^{p}\\
&=\Vert  \sum_{s\in \mathbb{Z}}\int_{2^s}^{2^{s+1}} \lambda^{ \frac{r}{\nu} }dE(\lambda)f   \Vert_{L^p}^{p}\leq   \sum_{s\in \mathbb{Z}} \Vert \int_{2^s}^{2^{s+1}} \lambda^{\frac{r}{\nu}  }dE(\lambda)f   \Vert_{L^p}^{p}\\
&=   \sum_{s\in \mathbb{Z}}2^{\frac{s}{\nu}rp} \Vert \int_{2^s}^{  2^{s+1}  }2^{-\frac{s}{\nu}r} \lambda^{\frac{r}{\nu}}dE(\lambda)f   \Vert_{L^p}^{p}\\ &\asymp  \sum_{s\in \mathbb{Z}}2^{\frac{s}{\nu}rp} \Vert \int_{2^s}^{2^{s+1}}dE(\lambda)f   \Vert_{L^p}^{p}\asymp  \sum_{s\in \mathbb{N}_{0}}2^{\frac{s}{\nu}rp} \Vert \int_{2^s}^{ 2^{s+1} }\psi_{s}(\lambda)dE(\lambda)f   \Vert_{L^p}^{p}\\&= \sum_{s\in \mathbb{N}_0}2^{\frac{s}{\nu}rp} \Vert \psi_{s}(\mathcal{R})f   \Vert_{L^p}^{p}=\Vert f\Vert^p_{\dot{B}^{r}_{p,p}}.
\end{align*}
For the other embedding we use the following version of the Minkowski  integral inequality 
$$ (\sum_{j}(\int\limits_{X} f_j(x) d\mu(x))^{\alpha})^{\frac{1}{\alpha}}\leq\int\limits_{X}(\sum_{j}f_j^{\alpha}(x))^{\frac{1}{\alpha}}d\mu(x) ,\,\,\,f_j(x)\geq 0,\,\,a.e. \,x\in X,$$
where $(X,\mu)$ is a $\sigma$-finite measure space, and $\alpha=\frac{2}{p}.$ So, we get
\begin{align*}
\Vert f \Vert_{\dot{B}^r_{p,2}}&=\left( \sum_{s\in \mathbb{N}_0} 2^{2r\frac{s}{\nu}}\Vert\psi_{s}(\mathcal{R})f \Vert^{2}_{L^p}\right)^{\frac{1}{2}}\\&=\left( \sum_{s\in \mathbb{N}_0} 2^{2r\frac{s}{\nu}}[\int_{G}\vert\psi_{s}(\mathcal{R})f(x) \vert^{p}dx]^{\frac{2}{p}}\right)^{\frac{p}{2} \cdot \frac{1}{p}}\leq\left[ \int_{G} [  \sum_{s\in\mathbb{N}_0}2^{2\frac{s}{\nu}r}  |\psi_{s}(\mathcal{R})f(x)       |^{\frac{2}{p}p}dx]^{\frac{p}{2}}   \right]^{\frac{1}{p}}\\&=\left[ \int_{G} [  \sum_{s\in\mathbb{N}_0}2^{2\frac{s}{\nu}r}  |\psi_{s}(\mathcal{R})f(x)       |^{2}dx]^{\frac{p}{2}}   \right]^{\frac{1}{p}}\\
&=\Vert   [\sum_{s\in\mathbb{N}_0}2^{2\frac{s}{\nu}r}  |\psi_{s}(\mathcal{R})f(x)       |^{2}dx]^{\frac{1}{2}}\Vert_{L^{p}}\asymp \Vert   [\sum_{s\in\mathbb{N}_0}  |\psi_{s}(\mathcal{R}^{\frac{r}{\nu}})f(x)       |^{2}dx]^{\frac{1}{2}}\Vert_{L^{p}}\\
&\asymp \Vert \mathcal{R}^{\frac{r}{\nu}}f \Vert_{L^p}=\Vert f\Vert_{\dot{H}^{r,p}},
\end{align*}
using Littlewood-Paley theorem (Theorem \ref{LPT}). We observe that in the embedding $\dot{B}^{r}_{p,p}(G)\hookrightarrow \dot{H}^{r,p}(G)\hookrightarrow \dot{B}^{r}_{p,2}(G),$ if   $p=2$ then $\dot{H}^{r,2}(G)=\dot{B}^r_{2,2}(G).$ Now, for the proof of $(5)$ we use Nikolskii inequality,
\begin{align*}
\Vert f \Vert_{L^{q}}&=\Vert \int_{\widehat{G}}\textrm{Tr}[\pi(x)\widehat{f}(\pi)]d\pi   \Vert_{L^q}\\
&=\Vert\sum_{s\in\mathbb{N}_0}\int_{\widehat{G}}\textrm{Tr}[\pi(x)\psi_{s}[\pi(\mathcal{R})]\widehat{f}(\pi)]d\pi   \Vert_{L^q}\\
&\leq \sum_{s\in\mathbb{N}_0} \Vert\int_{\widehat{G}}\textrm{Tr}[\pi(x)\psi_{s}[\pi(\mathcal{R})]\widehat{f}(\pi)]d\pi   \Vert_{L^q}\\
&= \sum_{s\in\mathbb{N}_0} \Vert \psi_{s}(\mathcal{R})f\Vert_{L^q}\leq  \sum_{s\in\mathbb{N}_0} 2^{\frac{Q}{\nu}(\frac{1}{p}-\frac{1}{q })}\Vert \psi_{s}(\mathcal{R})f\Vert_{L^p}\\
&=\Vert f\Vert_{\dot{B}^{{Q}(\frac{1}{p}-\frac{1}{q})}_{p,1}}.
\end{align*}
This completes the proof.
\end{proof}
In the following theorem we present embeddings properties for inhomogeneous Besov spaces ${B}^{r}_{p,q}(G).$  The proof is similar to the homogeneous case, so we omit it.
\begin{theorem}\label{Emb2}
Let $G$ be a graded Lie group of homogeneous dimension $Q$ and let $r\in\mathbb{R}.$ Then 
\begin{itemize}
\item[(1)] $B^{r+\varepsilon}_{p,q_1}(G)\hookrightarrow B^{r}_{p,q_1}(G)\hookrightarrow B^{r}_{p,q_2}(G)\hookrightarrow B^{r}_{p,\infty}(G),$  $\varepsilon>0,$ $0<p\leq \infty,$ $0<q_{1}\leq q_2\leq \infty.$\\
\item[(2)]  $B^{r+\varepsilon}_{p,q_1}(G)\hookrightarrow B^{r}_{p,q_2}(G)$, $\varepsilon>0,$ $0<p\leq \infty,$ $1\leq q_2<q_1<\infty.$\\
\item[(3)]  $B^{r_1}_{p_1,q}(G)\hookrightarrow B^{r_2}_{p_2,q}(G),$  $1\leq p_1\leq p_2\leq \infty,$ $0<q<\infty,$ $r_{1}\in\mathbb{R}$ and $r_2=r_1- Q(\frac{1}{p_1}-\frac{1}{p_2}).$\\
\item[(4)] $H^{r}(G)=B^{r}_{2,2}(G)$ and $B^{r}_{p,p}(G)\hookrightarrow H^{r,p}(G)\hookrightarrow B^{r}_{p,2}(G),$ $1<p\leq 2.$\\
\item[(5)] $B^{r}_{p,1}(G)\hookrightarrow L^{q}(G), $ $1\leq p\leq q\leq \infty,$ $r= Q(\frac{1}{p}-\frac{1}{q}).$
\end{itemize}
\end{theorem}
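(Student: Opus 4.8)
The plan is to run the proof of Theorem \ref{Emb1} essentially verbatim, replacing the Rockland operator $\mathcal{R}$ by the (non-homogeneous) operator $I+\mathcal{R}$ throughout, and then to isolate the few places where homogeneity was actually used. Parts $(1)$ and $(2)$ concern only the numerical sequence $\{2^{lr/\nu}\Vert\psi_l(I+\mathcal{R})f\Vert_{L^p}\}_{l\in\mathbb{N}_0}$: part $(1)$ is the chain of embeddings $\ell^{q_1}\hookrightarrow\ell^{q_2}\hookrightarrow\ell^\infty$ together with the trivial monotonicity in $r$ furnished by the factor $2^{l\varepsilon/\nu}$, and part $(2)$ is a single application of H\"older's inequality on $\mathbb{N}_0$ with exponents $q_1/q_2$ and its conjugate. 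Since these arguments never touch the operator itself, they transfer with no change.

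For part $(3)$ the one substantive point is to reprove the Nikolskii step with $\psi_s(I+\mathcal{R})$ in place of $\psi_s(\mathcal{R})$. Here I would use that $\psi$ is supported in $[1/4,2]$, so $\psi_s(I+\mathcal{R})$ localizes the spectrum of $I+\mathcal{R}$ to $[2^{s-2},2^{s+1}]$; because $I+\mathcal{R}\geq I$, the spectrum of $\mathcal{R}$ on the range of $\psi_s(I+\mathcal{R})$ is contained in $[0,2^{s+1}]$ for every $s\in\mathbb{N}_0$. Consequently, writing $g=\psi_s(I+\mathcal{R})f$, one has $\chi_{2^{s+1}}(\mathcal{R})g=g$, that is $T_{2^{s+1}}g=g$, and Theorem \ref{Nikolskii} applied with $L=2^{s+1}$ yields
$$\Vert\psi_s(I+\mathcal{R})f\Vert_{L^{p_2}}\leq C\,2^{s\frac{Q}{\nu}(\frac{1}{p_1}-\frac{1}{p_2})}\Vert\psi_s(I+\mathcal{R})f\Vert_{L^{p_1}}.$$
Inserting this into the defining series and absorbing the gain $2^{s\frac{Q}{\nu}(\frac{1}{p_1}-\frac{1}{p_2})}$ into the weight exactly as in Theorem \ref{Emb1} produces the shift $r_2=r_1-Q(\frac{1}{p_1}-\frac{1}{p_2})$. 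The same localization handles part $(5)$: decomposing $f=\sum_s\psi_s(I+\mathcal{R})f$ (legitimate since $\sum_l\psi_l\equiv 1$ on $(0,\infty)$, which contains the spectrum of $I+\mathcal{R}$), then applying the triangle inequality and the above estimate with $p_1=p$, $p_2=q$ recovers $B^{Q(1/p-1/q)}_{p,1}(G)\hookrightarrow L^q(G)$.

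Part $(4)$ is where the loss of homogeneity is felt, and I expect the Littlewood--Paley identification to be the main obstacle. The chain $B^r_{p,p}(G)\hookrightarrow H^{r,p}(G)\hookrightarrow B^r_{p,2}(G)$ is obtained as in Theorem \ref{Emb1} by decomposing the spectral integral $(I+\mathcal{R})^{r/\nu}=\int_1^\infty\mu^{r/\nu}\,d\widetilde{E}(\mu)$ over the dyadic intervals $[2^s,2^{s+1}]$, $s\in\mathbb{N}_0$ (which is in fact cleaner than the homogeneous case, the spectrum of $I+\mathcal{R}$ being bounded below by $1$), using that $\mu^{r/\nu}\asymp 2^{sr/\nu}$ on the $s$-th piece together with the Minkowski integral inequality with exponent $2/p\geq 1$ for the second embedding; the case $p=2$ reduces to Plancherel and gives $H^r(G)=B^r_{2,2}(G)$. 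The single ingredient not already available is a Littlewood--Paley theorem for $I+\mathcal{R}$, since Theorem \ref{LPT} and its underlying scaling Lemma \ref{lemmaLP} are stated for the homogeneous operator $\mathcal{R}$. I would supply this by observing that for $s$ large the spectral localizations of $\psi_s(I+\mathcal{R})$ and $\psi_s(\mathcal{R})$ are comparable and finitely overlapping (as $\lambda\to\infty$ the shift by $1$ is negligible), so the square function for $I+\mathcal{R}$ differs from that for $\mathcal{R}$ only by finitely many low-frequency terms $\psi_l(I+\mathcal{R})$, each an $L^p$-bounded multiplier via Theorem \ref{HMT}. The genuinely hard point is precisely to verify the local uniform $H^s(\widehat{G})$ bounds of these non-homogeneous symbols by hand, as Lemma \ref{lemmaLP} no longer delivers them automatically.
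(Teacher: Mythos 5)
Your proposal follows the route the paper itself intends: the paper omits the proof of Theorem \ref{Emb2} entirely, stating only that it is ``similar to the homogeneous case'', and your plan of rerunning the proof of Theorem \ref{Emb1} with $I+\mathcal{R}$ in place of $\mathcal{R}$ is exactly that. Parts (1)--(3) and (5) are handled correctly; in particular your treatment of the Nikolskii step is right (and note that the spectral localisation you invoke, namely that $\psi_s(I+\mathcal{R})f$ lies in the range of $E_{[0,2^{s+1}]}(\mathcal{R})$ so that Theorem \ref{Nikolskii} applies with $L=2^{s+1}$, is the same observation the paper uses tacitly in the homogeneous case). The one point you flag as ``genuinely hard'' in part (4) --- the uniform local Sobolev bounds needed for a Littlewood--Paley theorem for $I+\mathcal{R}$ --- is in fact no harder than the homogeneous case, and you do not need the finite-overlap comparison with the homogeneous square function (which, as stated, is not quite right: \emph{every} term of the two square functions differs, not just finitely many low-frequency ones). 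Simply write
\begin{equation*}
\psi_{l}\bigl(1+\pi(\mathcal{R})\bigr)=\psi\bigl(2^{-l}+2^{-l}\pi(\mathcal{R})\bigr)=\eta_{l}\bigl(\pi^{(2^{-l/\nu})}(\mathcal{R})\bigr),\qquad \eta_{l}(t):=\psi(2^{-l}+t),
\end{equation*}
so that Lemma \ref{lemmaLP} gives $\Vert\psi_{l}(1+\pi(\mathcal{R}))\Vert_{\dot{H}^{s}(\widehat{G})}=2^{-\frac{l}{\nu}(s-\frac{Q}{2})}\Vert\eta_{l}(\pi(\mathcal{R}))\Vert_{\dot{H}^{s}(\widehat{G})}$. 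Since the profiles $\eta_{l}$ are supported in a fixed compact subset of $(0,\infty)$ (say $[1/8,2]$ for $l\geq 3$) and are bounded in every $C^{N}$ uniformly in $l$, the quantity $\sup_{l}\Vert\eta_{l}(\pi(\mathcal{R}))\Vert_{\dot{H}^{s}(\widehat{G})}$ is finite by the very same kernel estimates the paper already invokes to make the constant in \eqref{similar} finite for the single profile $\psi_{1}$; the finitely many exceptional low indices are individually bounded by Theorem \ref{HMT}. With that, the proof of Theorem \ref{LPT} goes through verbatim for $I+\mathcal{R}$ and your part (4) closes.
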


\begin{remark}[Right Besov spaces]
\label{REM:right}
Throughout this section we have considered Besov spaces associated to (left-invariant) positive Rockland operators. A similar formulation of homogeneous and inhomogeneous (right) Besov spaces can be obtained if we choose (right-invariant) positive Rockland operators.  It can be shown that these spaces satisfy (right) versions of Theorems \ref{Emb1} and \ref{Emb2}. When properties that we want to consider hold for left and right Besov spaces, we omit the prefixes left and right, nevertheless, we consider in the proofs the case of (left) Besov spaces.
\end{remark}

\section{Independence of Rockland operators and interpolation  properties}\label{independenceinterpolation} 

In this section we prove the independence of the choice of Rockland operator and the dyadic partition $\psi_{l}$ in the definition of Besov spaces. For this, we show that Besov spaces can be obtained as interpolation of Sobolev spaces. If $X_0$ and $X_1$ are Banach spaces, the main notion in real interpolation theory is the $K$-functional, defined by
\begin{equation}
K(f,t)=\inf\{\Vert f_{0}\Vert_{X_0}+t\Vert f_1 \Vert_{X_1}:f=f_0+f_1, f_0\in X_0,f_1\in X_1\},\,\,t\geq 0.
\end{equation}
If $0<\theta<1$ and $1\leq q<\infty,$ the real interpolation space $X_{\theta,q}:=(X_0,X_1)_{\theta,q}$ is defined by those vectors $f\in X_0+X_1$ satisfying 
\begin{equation}
\Vert f \Vert_{\theta,q}=\left(  \int_{0}^{\infty}  ( t^{-\theta}K(f,t))^{q}\frac{dt}{t}   \right)^{\frac{1}{q}}<\infty\,\, \text{if} \,\,q<\infty,
\end{equation}
and for $q=\infty$
\begin{equation}
\Vert f \Vert_{\theta,q} =\sup_{t>0}  t^{-\theta}K(f,t)<\infty.
\end{equation}
For our purposes, the following discrete form (see \cite{handbook}, p. 1136) will be useful
\begin{equation}
\Vert f \Vert_{\theta,q}\asymp \inf\left\{\left( \sum_{k\in\mathbb{Z}}\max\{\Vert f_k \Vert_{X_0}, 2^{k}\Vert f_{k}\Vert_{X_1}\}^{q}\right)^{\frac{1}{q}}:\,\,\,f=\sum_{k\in\mathbb{Z}}{2^{k\theta}}f_k\right\} .
\end{equation}
with $1\leq q<\infty.$

\begin{theorem}\label{independence}
Let $G$ be a graded Lie group, and let $\mathcal{R}$ and $\mathcal{R}'$ be two positive Rockland operators with homogeneity degrees $\nu>0$ and $\nu'>0,$ respectively. If $(\psi_{l})_{l}$ and $(\psi'_{l})_l$ are sequences satisfying \eqref{deco1}, $1<p<\infty$ and $1\leq q<\infty,$  the spaces $\dot{B}^{r}_{p,q,\psi,\mathcal{R}}(G)$ and $\dot{B}^{r}_{p,q,\psi',\mathcal{R'}}(G)$ coincide and  have equivalent norms, as well as the spaces  ${B}^{r}_{p,q,\psi,\mathcal{R}}(G)$ and ${B}^{r}_{p,q,\psi',\mathcal{R'}}(G).$ We also have the following interpolation properties:
\begin{equation}\label{eq11} 
B^{r}_{p,q}(G)=(H^{b,p}(G),H^{a,p}(G))_{\theta,q},\quad a<r<b,\quad r=b(1-\theta)+a\theta,
\end{equation}
and 
\begin{equation}\label{eq2} \dot{B}^{r}_{p,q}(G)=(\dot{H}^{b,p}(G),\dot{H}^{b,a}(G))_{\theta,q},\quad a<r<b,\quad r=b(1-\theta)+a\theta.
\end{equation}
\end{theorem}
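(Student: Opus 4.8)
The plan is to reduce both the independence statement and the interpolation identities to the single assertion that the Besov space built from a fixed pair $(\mathcal{R},(\psi_l))$ coincides with the real interpolation space of the corresponding Sobolev spaces. Once this identity is proven, independence comes for free: the interpolation space $(H^{b,p}(G),H^{a,p}(G))_{\theta,q}$ does not involve the dyadic functions $\psi_l$ at all, and the Sobolev spaces $H^{s,p}(G)$ are themselves independent of the particular positive Rockland operator used to define them (see \cite[Section 4]{FR2}). Thus it suffices to prove, for a fixed $(\mathcal{R},(\psi_l))$ and with all Sobolev spaces defined through this same $\mathcal{R}$, the identity \eqref{eq11} together with its homogeneous analogue \eqref{eq2}; the equivalence of the norms of $\dot{B}^{r}_{p,q,\psi,\mathcal{R}}(G)$ and $\dot{B}^{r}_{p,q,\psi',\mathcal{R'}}(G)$, and of the inhomogeneous pair, then follows at once.

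The engine of the proof is a pair of uniform spectral-localisation estimates. Using the H\"ormander-Mihlin theorem (Theorem \ref{HMT}) together with the scaling identities of Lemma \ref{dilationsrepre} and Lemma \ref{lemmaLP}, exactly as in the derivation of \eqref{boundofsigmal}, I would establish that for every $s\in\mathbb{R}$ and $1<p<\infty$ there is a constant independent of $l$ with
\begin{equation*}
\Vert \psi_l(\mathcal{R}) g\Vert_{L^p(G)}\lesssim 2^{-\frac{l}{\nu}s}\Vert g\Vert_{\dot{H}^{s,p}(G)},\qquad \Vert \psi_l(\mathcal{R}) h\Vert_{\dot{H}^{s,p}(G)}\lesssim 2^{\frac{l}{\nu}s}\Vert \psi_l(\mathcal{R}) h\Vert_{L^p(G)}.
\end{equation*}
Both reduce to the claim that the symbol $\lambda\mapsto 2^{\mp\frac{l}{\nu}s}\lambda^{\pm s/\nu}\psi_l(\lambda)$, after the dilation $\lambda\mapsto 2^l\lambda$, has uniformly bounded local Sobolev norm on $\widehat{G}$, which is the same computation as \eqref{similar}. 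The inhomogeneous versions, with $(I+\mathcal{R})^{s/\nu}$ in place of $\mathcal{R}^{s/\nu}$, are obtained in the same manner and are what feed into \eqref{eq11}.

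With these estimates in hand I would invoke the discrete description of the $K$-functional recalled above, with $X_0=H^{b,p}(G)$ and $X_1=H^{a,p}(G)$. For the inclusion $B^{r}_{p,q}(G)\hookrightarrow (H^{b,p},H^{a,p})_{\theta,q}$ I take the Littlewood-Paley decomposition $f=\sum_l\psi_l(\mathcal{R})f$ and set $f_k=2^{-k\theta}\psi_l(\mathcal{R})f$ under the reindexing $k=\frac{l}{\nu}(b-a)$; the second localisation estimate then shows that $\Vert f_k\Vert_{H^{b,p}}$ and $2^k\Vert f_k\Vert_{H^{a,p}}$ are both $\lesssim 2^{\frac{l}{\nu}r}\Vert\psi_l(\mathcal{R})f\Vert_{L^p}$, precisely because $r=b(1-\theta)+a\theta$ is the exponent that balances the two Sobolev norms at this value of $k$. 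Summing in $\ell^q$ gives the bound by $\Vert f\Vert_{B^{r}_{p,q}}$. For the reverse inclusion I start from an arbitrary representation $f=\sum_k 2^{k\theta}f_k$, apply $\psi_l(\mathcal{R})$, and estimate $\Vert\psi_l(\mathcal{R})f_k\Vert_{L^p}$ by the smaller of the two localisation bounds; writing $c_k=\max\{\Vert f_k\Vert_{H^{b,p}},2^k\Vert f_k\Vert_{H^{a,p}}\}$ this yields a convolution estimate $2^{\frac{l}{\nu}r}\Vert\psi_l(\mathcal{R})f\Vert_{L^p}\lesssim \sum_k c_k\,\omega(k-m)$, where $m=\frac{l}{\nu}(b-a)$ and $\omega$ is a fixed two-sided exponentially decaying sequence (the two branches of the $\min$ decaying at rates $\theta$ and $1-\theta$ on the two sides of the balance point). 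Young's convolution inequality on $\ell^q$ then gives $\Vert f\Vert_{B^{r}_{p,q}}\lesssim(\sum_k c_k^q)^{1/q}$, and taking the infimum over admissible decompositions completes the interpolation identity, the homogeneous case \eqref{eq2} being identical with $\mathcal{R}^{s/\nu}$ in place of $(I+\mathcal{R})^{s/\nu}$.

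The main obstacle is the uniform-in-$l$ control in the two spectral-localisation estimates: one must verify that applying $\mathcal{R}^{\pm s/\nu}$ (or $(I+\mathcal{R})^{\pm s/\nu}$) to a dyadic block does not destroy the uniformity, i.e. that the rescaled symbols lie in a bounded subset of the local Sobolev class $\Vert\cdot\Vert_{H^{s'},l.u.}$ with $s'>Q/2$ required by Theorem \ref{HMT}. The homogeneous case is cleanest thanks to exact scaling; the inhomogeneous case requires separating the low-frequency block $\psi_0$, on which $(1+\lambda)^{s/\nu}$ and $\lambda^{s/\nu}$ differ, and handling it directly. Minor bookkeeping is also needed when $\frac{b-a}{\nu}$ is not an integer, which I would absorb into the implied constants using the exponential decay of $\omega$ (equivalently, by passing to the continuous $K$-functional).
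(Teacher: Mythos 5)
Your overall strategy coincides with the paper's: derive the independence claims from the identification of $B^{r}_{p,q}(G)$ with a real interpolation space of Sobolev spaces (which are themselves Rockland-independent by \cite[Theorem 4.4.20]{FR2}), prove the inclusion $B^{r}_{p,q}\hookrightarrow(H^{b,p},H^{a,p})_{\theta,q}$ by inserting the Littlewood--Paley decomposition $f=\sum_l\psi_l(\mathcal{R})f$ into the discrete description of the $K$-functional, and control everything by uniform dyadic spectral-localisation bounds obtained from Theorem \ref{HMT} exactly as in \eqref{similar} and \eqref{boundofsigmal}. Where you genuinely diverge is the reverse inclusion. You run the standard retract argument: take an arbitrary admissible decomposition $f=\sum_k 2^{k\theta}f_k$, apply $\psi_l(\mathcal{R})$, bound each term by the minimum of the two localisation estimates, and conclude with Young's inequality for a discrete convolution against a two-sided exponentially decaying weight. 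The paper instead proves the two endpoint embeddings $\dot{H}^{r_1,p}\hookrightarrow\dot{B}^{r}_{p,q}$ and $\dot{H}^{r_0,p}\hookrightarrow\dot{B}^{r}_{p,q}$ for $r_0<r<r_1$ (using $\Vert\psi_l(\mathcal{R})\Vert_{\mathcal{L}(L^p)}\lesssim 2^{-\frac{l}{\nu}(s-\frac{Q}{2})}$), deduces $(\dot{H}^{r_1,p},\dot{H}^{r_0,p})_{\theta,q}\subset\dot{H}^{r_1,p}+\dot{H}^{r_0,p}\hookrightarrow\dot{B}^{r}_{p,q}$ from the trivial containment of the interpolation space in the sum, and then removes its normalisation $r_1=r_0+\nu$, $r>0$ by composing with the isomorphism $(I+\mathcal{R})^{\frac{|r|-r}{\nu}}$ and a reiteration step to reach general $a<r<b$. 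Your route buys uniformity and generality --- it handles arbitrary $a<r<b$ and negative $r$ in a single stroke and tracks constants more transparently --- at the price of the convolution bookkeeping you correctly flag (the non-integer reindexing $k=\frac{l}{\nu}(b-a)$, the fattened cutoff implicit in $\Vert\psi_l(\mathcal{R})h\Vert_{\dot{H}^{s,p}}\lesssim 2^{\frac{l}{\nu}s}\Vert\psi_l(\mathcal{R})h\Vert_{L^p}$, and the separate treatment of the $\psi_0$ block in the inhomogeneous case); the paper's route is shorter but needs the extra reductions. Both are valid proofs of the statement.
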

\begin{proof}
It was proved in \cite[Theorem 4.4.20]{FR2}, that the definition of (homogeneous and inhomogeneous) Sobolev spaces ($\dot{H}^{r,p}(G)$ and $H^{r,p}(G)$, respectively) does not depend on the choice of Rockland operators. Hence the independence of the choice of Rockland operators and of the dyadic decomposition $\psi_{l}$ in the case of Besov spaces  would follow if we show that Besov spaces can be obtained by interpolation of Sobolev spaces. So, it suffices to prove \eqref{eq11} and \eqref{eq2}. First we will show that for $r>0,$ $\dot{B}^{r}_{p,q}=(\dot{H}^{r_1,p},\dot{H}^{r_0,p})_{\theta,q}$ where $0<r_{0}<r<r_{1}=r_{0}+\nu,$ $r=r_{1}+(r_0-r_1)\theta,$ and later we will deduce the general case from this fact. For $f\in \dot{H}^{r_{1},p}+\dot{H}^{r_0,p}$ we write
\begin{equation}
f=\sum_{l\geq 0}\psi_{l}(\mathcal{R})f=\sum_{l\geq 0}2^{l\theta}f_{l}',\quad f_{l}'=2^{-l\theta} \psi_{l}(\mathcal{R})f.
\end{equation}
Hence
\begin{align*}
\Vert f\Vert^{q}_{\theta,q}\lesssim \sum_{l\geq 0}\max\{\Vert f_{l}' \Vert_{\dot{H}^{r_1,p}},2^{l}\Vert f_{l}' \Vert_{\dot{H}^{r_0,p}}\}^{q}. 
\end{align*}
Now, if $(E_{\lambda})_{\lambda\geq 0}$ denotes the spectral resolution associated to $\mathcal{R},$ we have

\begin{align*}
\Vert f\Vert^{q}_{\theta,q} &\lesssim \sum_{l\geq 0}2^{-\theta lq}\max\{\Vert \psi_{l}(\mathcal{R}) f\Vert_{\dot{H}^{r_1,p}},2^{l}\Vert \psi_{l}(\mathcal{R}) f\Vert_{\dot{H}^{r_0,p}}\}^{q}\\
&=\sum_{l\geq 0} 2^{-\theta lq}\max\{\Vert \int_{2^{l-1}}^{2^{l+1}}\psi_{l}(\lambda)dE_{\lambda} f\Vert_{\dot{H}^{r_1,p}},2^{l}\Vert \int_{2^{l-1}}^{2^{l+1}}\psi_{l}(\lambda)dE_{\lambda} f \Vert_{\dot{H}^{r_0,p}}\}^{q}\\
&=\sum_{l\geq 0} 2^{-\theta lq}\max\{\Vert \int_{2^{l-1}}^{2^{l+1}}\lambda^{\frac{r_1}{\nu}}\psi_{l}(\lambda)dE_{\lambda} f\Vert_{L^{p}},2^{l}\Vert \int_{2^{l-1}}^{2^{l+1}}\lambda^{\frac{r_0}{\nu}}\psi_{l}(\lambda)dE_{\lambda} f \Vert_{L^{p}}\}^{q}\\
&\asymp\sum_{l\geq 0} 2^{-\theta lq}\max\{\Vert 2^{ \frac{{r_1}l}{\nu}  } \int_{2^{l-1}}^{2^{l+1}}\psi_{l}(\lambda)dE_{\lambda} f\Vert_{L^{p}},2^{l}\Vert 2^{  \frac{r_0 l}{\nu}  } \int_{2^{l-1}}^{2^{l+1}}\psi_{l}(\lambda)dE_{\lambda} f \Vert_{L^{p}}\}^{q}\\
&=\sum_{l\geq 0} 2^{-\theta lq}\max\{\Vert 2^{ \frac{{r_1}l}{\nu}  } \psi_{l}(\mathcal{R}) f\Vert_{L^{p}},2^{l}\Vert 2^{ \frac{{r_0}l}{\nu}  } \psi_{l}(\mathcal{R}) f \Vert_{L^{p}}\}^{q}.\\
\end{align*}
Since
$$ \max\{\Vert 2^{ \frac{{r_1}l}{\nu}  } \psi_{l}(\mathcal{R}) f\Vert_{L^{p}},2^{l}\Vert 2^{ \frac{{r_0}l}{\nu}  } \psi_{l}(\mathcal{R}) f \Vert_{L^{p}}\}=\max\{ 2^{ \frac{{r_1}l}{\nu}  },2^{ \frac{{r_0}l}{\nu} +l } \}\Vert\psi_{l}(\mathcal{R}) f \Vert_{L^{p}} ,$$
we obtain
\begin{align*}
\Vert f\Vert^{q}_{\theta,q} &\lesssim \sum_{l\geq 0} 2^{-\theta lq}\max\{ 2^{ \frac{{r_1}lq}{\nu}  },2^{ \frac{{r_0}lq}{\nu} +lq } \}\Vert\psi_{l}(\mathcal{R}) f \Vert_{L^{p}} ^{q}\\
&= \sum_{l\geq 0} \max\{ 2^{ \frac{{r_1}lq}{\nu} -\theta lq },2^{ \frac{{r_0}lq}{\nu} +lq -\theta lq} \}\Vert\psi_{l}(\mathcal{R}) f \Vert_{L^{p}} ^{q}\\
&= \sum_{l\geq 0} \max\{ 2^{-ql\theta( \frac{r_0-r_1}{\nu}+1)},2^{ql(1-\theta)(\frac{r_0-r_1}{\nu}+1)} \}2^{rql}\Vert\psi_{l}(\mathcal{R}^{\frac{1}{\nu}}) f \Vert_{L^{p}} ^{q}.\\
\end{align*}
Taking into account that $r_{0}-r_1+\nu=0$ we have
\begin{equation}
\Vert f\Vert^q_{\theta,q}\lesssim \sum_{l\geq 0} 2^{  \frac{rql}{\nu}  }\Vert\psi_{l}(\mathcal{R}) f \Vert_{L^{p}} ^{q} \equiv\Vert f \Vert^{q}_{\dot{B}^{r}_{p,q}}.
\end{equation}
Now, in order to proof the converse inequality we use the following estimate on the operator norm of $\psi_{l}$ for $l$ large enough :  $\Vert \psi_{l}(\mathcal{R})\Vert_{\mathcal{L}(L^p)}=O(1) ,$ which can be obtained by interpolation between the trivial estimate for $p=2,$ \eqref{theomega}, and the duality argument. We observe that by the Liouville theorem (see Geller \cite{geller_83} or \cite[Section 3.2.8]{FR2}), $\lambda=0$ is not an eigenvalue of $\mathcal{R}.$ So, we have,
\begin{align*}
\Vert f \Vert^q_{\dot{B}^r_{p,q}}&=\sum_{l\geq 0}2^{  \frac{lrq}{\nu} }\Vert \psi_{l}(\mathcal{R})f \Vert_{L^p}^q\\
&=\sum_{l\geq 0}2^{  \frac{lrq}{\nu} }\Vert \psi_{l}(\mathcal{R})\mathcal{R}^{-\frac{r_1}{\nu}}\mathcal{R}^{\frac{r_1}{\nu}} f \Vert_{L^p}^q\\
&=\sum_{l\geq 0}2^{  \frac{lrq}{\nu} }\Vert \int_{2^{l-1}}^{2^{l+1}}\psi_{l}(\lambda)\lambda^{-\frac{r_1}{\nu}}dE_{\lambda}\mathcal{R}^{\frac{r_1}{\nu}} f \Vert_{L^p}^q\\
&\asymp \sum_{l\geq 0}2^{  \frac{lrq}{\nu}-\frac{lr_1q}{\nu} }\Vert \int_{2^{l-1}}^{2^{l+1}}\psi_{l}(\lambda)dE_{\lambda}\mathcal{R}^{\frac{r_1}{\nu}} f \Vert_{L^p}^q\\
&= \sum_{l\geq 0}2^{  \frac{lrq}{\nu}-\frac{lr_1q}{\nu} }\Vert \psi_{l}(\mathcal{R})\mathcal{R}^{\frac{r_1}{\nu}} f \Vert_{L^p}^q\\
&\lesssim  \sum_{l\geq 0}2^{  \frac{lrq}{\nu}-\frac{lr_1q}{\nu}}\Vert \mathcal{R}^{\frac{r_1}{\nu}} f \Vert_{L^p}^q\lesssim C_{s}\Vert \mathcal{R}^{\frac{r_1}{\nu}} f \Vert_{L^p}^q.
\end{align*}
Hence
$$ \Vert f \Vert_{\dot{B}^r_{p,q}}\lesssim C_s\Vert f\Vert_{\dot{H}^{r_1,p}} . $$
In a similar way, we can prove the estimate
$$ \Vert f \Vert_{\dot{B}^r_{p,q}}\lesssim \Vert f\Vert_{\dot{H}^{r_0,p}}.  $$
So, we have the embedding $\dot{H}^{r_i,p}\hookrightarrow \dot{B}^{r}_{p,q} $ for $i=0,1.$ Hence $(\dot{H}^{r_1,p},\dot{H}^{r_0,p})_{\theta,q} \hookrightarrow \dot{B}^{r}_{p,q}.$ So we conclude that $\Vert f\Vert_{\dot{B}^{r}_{p,q}}\lesssim \Vert f\Vert_{\theta,q}.$ In the case where $r<0$ we observe that $(I+\mathcal{R})^{\frac{|r|-r}{\nu}}:\dot{B}^{|r|}_{p,q}\rightarrow \dot{B}^{r}_{p,q}$ is an isomorphism and for $0<r_0<|r|<r_{0}+\nu=r_1$ we obtain,
\begin{align*} \dot{B}^{r}_{p,q}=(I+\mathcal{R})^{\frac{|r|-r}{\nu}}(\dot{B}^{|r|}_{p,q})&=((I+\mathcal{R}))^{\frac{|r|-r}{\nu}}\dot{H}^{r_0,p},(I+\mathcal{R}))^{\frac{|r|-r}{\nu}}\dot{H}^{r_1,p})_{\theta,q}\\
&=(\dot{H}^{r_0+r-|r|,p},\dot{H}^{r_1+r-|r|,p})_{\theta,q},
\end{align*}
with $|r|=r_1+\theta(r_0-r_1).$ The general case where $a<r<b$ and $r=b(1-\theta)+a\theta$ now follows if we consider $r_{0}=r-\frac{\nu}{2},$ $r_{1}=r+\frac{\nu}{2}$ and by observing that
$$ r=\frac{1}{2}r_1+\frac{1}{2}r_2=r_1+\frac{1}{2}(r_2-r_1).$$
So we get
\begin{equation}
(\dot{H}^{b,p},\dot{H}^{a,p})_{\theta,q}=(\dot{H}^{r_1,p},\dot{H}^{r_0,p})_{\frac{1}{2},q}.
\end{equation}
Since $(\dot{H}^{r_1,p},\dot{H}^{r_0,p})_{\frac{1}{2},q}=\dot{B}^{r}_{p,q}$ we conclude the proof of the homogeneous case. An analogous proof can be adapted to the inhomogeneous case.
\end{proof}

\subsection{Interpolation inequalities in Besov spaces}
\label{sec:int}

In this subsection we consider the problem of interpolation inequalities on Besov spaces on graded Lie groups. The following theorem generalises a version for Besov spaces  in $\mathbb{R}^n$ proved by Machihara and Ozawa \cite{MaOza-2003}. In turn, this extended many other known families of inequalities, we refer to \cite{MaOza-2003} for the review of the literature.

\begin{theorem}
Let $\lambda,\mu,p,q,r$ and $\theta$ be real numbers. If $1\leq p,q\leq r\leq \infty,$ 
\begin{equation}\label{lambdamu}
Q(\frac{1}{p}-\frac{1}{r})<\lambda\quad \textnormal{\em and}\quad\mu<Q(\frac{1}{q}-\frac{1}{r}),
\end{equation}
then we have the following inequalities:
\begin{itemize}
\item[(i)] $\Vert f \Vert_{\dot{B}^{0}_{r,1}(G)}\leq C\Vert f \Vert_{\dot{B}^{\lambda}_{p,\infty}(G)}^{\theta}\Vert f \Vert_{\dot{B}^{\mu}_{q,\infty}(G)}^{1-\theta},\,\, \,\, f\in \dot{B}^{\lambda}_{p,\infty}(G)\cap \dot{B}^{\mu}_{q,\infty}(G),$\\

\item[(ii)] $\Vert f \Vert_{{B}^{0}_{r,1}(G)}\leq C \Vert f \Vert_{{B}^{\lambda}_{p,\infty}(G)}^{\theta}\Vert f \Vert_{{B}^{\mu}_{q,\infty}(G)}^{1-\theta},\,\, \,\, f\in {B}^{\lambda}_{p,\infty}(G)\cap {B}^{\mu}_{q,\infty}(G), $\\

\item[(iii)] $\Vert f\Vert_{L^{r}(G)}\,\leq\, C \Vert f \Vert_{{H}^{\lambda,p}(G)}^{\theta}\Vert f \Vert_{{H}^{\mu,q}(G)}^{1-\theta},\,\, \,\, f\in {H}^{\lambda,p}(G)\cap {H}^{\mu,q}(G), $
\end{itemize}
where 
\begin{equation} \theta(\lambda-Q(\frac{1}{p}-\frac{1}{r}))+(1-\theta)(\mu-Q(\frac{1}{q}-\frac{1}{r}))=0.
\end{equation}
\end{theorem}
\begin{proof}
In order to prove (i), we consider $f\in \dot{B}^{\lambda}_{p,\infty}(G)\cap \dot{B}^{\mu}_{q,\infty}(G)$ such that $f\not\equiv 0$ almost everywhere, with $\lambda,\mu,p$ and $q$ satisfying  \eqref{lambdamu}. If $\psi_{-1}$ is some smooth function supported in $[-1,\frac{1}{2}],$ then by using the fact that the corresponding Littlewood-Paley decomposition satisfies
\begin{equation}
\textnormal{supp}(\psi_{k})\cap \textnormal{supp}(\psi_{j})=\emptyset,\,\,|j-k|\geq 2,\,\, j,k\geq -1,
\end{equation}
we can write 
\begin{align*}
\Vert f\Vert_{\dot{B}^{0}_{r,1}(G)}&=\sum_{l=0}^{\infty}\Vert \psi_{l}(\mathcal{R})f\Vert_{L^{r}(G)}=\sum_{l=0}^{\infty}\Vert\sum_{k=0}^{\infty}\psi_{k}(\mathcal{R}) \psi_{l}(\mathcal{R})f\Vert_{L^{r}(G)}\\
&\leq \sum_{l=0}^{\infty}\sum_{k=-1}^{\infty}\Vert \psi_{k}(\mathcal{R})\psi_{l}(\mathcal{R})f\Vert_{L^{r}(G)}=\sum_{l=0}^{\infty}\sum_{k=l-1}^{l+1}\Vert \psi_{k}(\mathcal{R})\psi_{l}(\mathcal{R})f\Vert_{L^{r}(G)}\\
&=\sum_{l=0}^{\infty}\sum_{k=l-1}^{l+1}\Vert \psi_{l}(\mathcal{R})f\ast \mathscr{F}_{G}^{-1}[\psi_k(\pi[\mathcal{R}])]\Vert_{L^{r}(G)}.
\end{align*}
If we use the Young inequality, for $\frac{1}{r}+1=\frac{1}{m}+\frac{1}{s},$ we have
\begin{equation}\label{psi00}
\Vert \psi_{l}(\mathcal{R})f\ast \mathscr{F}_{G}^{-1}[\psi_k(\pi[\mathcal{R}])]\Vert_{L^{r}(G)}\leq  \Vert\mathscr{F}_{G}[\psi_k(\pi[\mathcal{R}])] \Vert_{L^m(G)}\Vert \psi_{l}(\mathcal{R})f \Vert_{L^{s}(G)}.
\end{equation}
By the action of the dilations $D_t$ on $G$ we have for $k\geq 1,$ and $r=2^{-\frac{k}{\nu}}$, that
\begin{align*}
 \Vert \mathscr{F}_{G}^{-1}[\psi_k(\pi[\mathcal{R}])] \Vert_{L^m(G)}^m&=\int_{G}| \mathscr{F}_{G}^{-1}(\psi_k(\pi(\mathcal{R})))(x)|^m\,dx
 \\ &=\int_{G}|\mathscr{F}^{-1}_{G}[\psi_0(2^{-k}\pi(\mathcal{R}))](x)|^m\,dx\\
&=\int_{G}| \mathscr{F}_{G}^{-1}[\psi_0(r^{\nu}\pi[\mathcal{R}])](x)|^m\,dx\\&=\int_{G}r^{-Qm}| \mathscr{F}_{G}^{-1}[\psi_0(\pi[\mathcal{R}])](D_{r^{-1}}x)|^m\,dx\\
&=\int_{G}r^{-Qm+Q}| \mathscr{F}_{G}^{-1}[\psi_0(\pi[\mathcal{R}])](x)|^m\,dx.\\
\end{align*}
Hence we obtain
$$\Vert  \mathscr{F}_{G}^{-1}[\psi_k(\pi[\mathcal{R}])] \Vert_{L^m(G)}=2^{\frac{kQ}{\nu}(1-\frac{1}{m})}\Vert  \mathscr{F}_{G}^{-1}[\psi_0(\pi[\mathcal{R}])]\Vert_{L^m(G)}\lesssim 2^{\frac{kQ}{\nu}(1-\frac{1}{m})}. $$
If $l\geq 2$, then by using \eqref{psi00} for $s=p,q$ simultaneously, we obtain
\begin{align*}
\Vert f\Vert_{\dot{B}^{0}_{r,1}(G)}
&\leq \sum_{j=0}^{\infty}\sum_{k=j-1}^{j+1}\Vert \psi_{j}(\mathcal{R})f\ast  \mathscr{F}_{G}^{-1}[\psi_k(\pi[\mathcal{R}])]\Vert_{L^{r}(G)}\\
&\lesssim \sum_{j\geq l}\sum_{k=j-1}^{j+1}2^{\frac{kQ}{\nu}(\frac{1}{p}-\frac{1}{r})}\Vert \psi_{j}(\mathcal{R})f\Vert_{L^p(G)}+\sum_{j< l}\sum_{k=j-1}^{j+1}2^{\frac{kQ}{\nu}(\frac{1}{q}-\frac{1}{r})}\Vert \psi_{j}(\mathcal{R})f\Vert_{L^q(G)}\\
&\lesssim \sum_{j\geq l}2^{\frac{jQ}{\nu}(\frac{1}{p}-\frac{1}{r})}\Vert \psi_{j}(\mathcal{R})f\Vert_{L^p(G)}+\sum_{j< l}2^{\frac{jQ}{\nu}(\frac{1}{q}-\frac{1}{r})}\Vert \psi_{j}(\mathcal{R})f\Vert_{L^q(G)}\\
&\lesssim \sum_{j\geq l}2^{\frac{jQ}{\nu}(\frac{1}{p}-\frac{1}{r}-\frac{\lambda}{Q})}2^{\frac{j\lambda}{\nu}}\Vert \psi_{j}(\mathcal{R})f\Vert_{L^p(G)}+\sum_{j< l}2^{\frac{jQ}{\nu}(\frac{1}{q}-\frac{1}{r}-\frac{\mu}{Q})}2^{\frac{j\mu}{\nu}}\Vert \psi_{j}(\mathcal{R})f\Vert_{L^q(G)}\\
&\lesssim \sum_{j\geq l}2^{\frac{jQ}{\nu}(\frac{1}{p}-\frac{1}{r}-\frac{\lambda}{Q})} \Vert f \Vert_{\dot{B}^\lambda_{p,\infty}}+\sum_{j< l}2^{\frac{jQ}{\nu}(\frac{1}{q}-\frac{1}{r}-\frac{\mu}{Q})}\Vert f \Vert_{\dot{B}^\mu_{q,\infty}}.\\
&\lesssim 2^{\frac{lQ}{\nu}(\frac{1}{p}-\frac{1}{r}-\frac{\lambda}{Q})} \Vert f \Vert_{\dot{B}^\lambda_{p,\infty}}+2^{\frac{lQ}{\nu}(\frac{1}{q}-\frac{1}{r}-\frac{\mu}{Q})}\Vert f \Vert_{\dot{B}^\mu_{q,\infty}},
\end{align*}
the last inequality due to \eqref{lambdamu}.
If we put $A:=\Vert f \Vert_{\dot{B}^\lambda_{p,\infty}}\Vert f \Vert^{-1}_{\dot{B}^\mu_{q,\infty}},$ then
\begin{align*}2^{\frac{lQ}{\nu}(\frac{1}{p}-\frac{1}{r}-\frac{\lambda}{Q})} \Vert f \Vert_{\dot{B}^\lambda_{p,\infty}} &+2^{\frac{lQ}{\nu}(\frac{1}{q}-\frac{1}{r}-\frac{\mu}{Q})}\Vert f \Vert_{\dot{B}^\mu_{q,\infty}} \\
&=(2^{\frac{lQ}{\nu}(\frac{1}{p}-\frac{1}{r}-\frac{\lambda}{Q})}A^{1-\theta}+2^{\frac{lQ}{\nu}(\frac{1}{q}-\frac{1}{r}-\frac{\mu}{Q})}A^{-\theta}) \Vert f \Vert^{\theta}_{\dot{B}^\lambda_{p,\infty}}\Vert f \Vert^{1-\theta}_{\dot{B}^\mu_{q,\infty}}  .
\end{align*}
Let us define the positive parameter $\sigma=(\lambda-\frac{Q}{p}+\frac{Q}{r})-(\mu-\frac{Q}{q}+\frac{Q}{r}),$ and assume that $l$ satisfies
\begin{equation}\label{lll}
2^{\frac{l}{\nu}}\leq A^{\frac{1}{\sigma}}< 2^{\frac{l+1}{\nu}}.
\end{equation}
We can assume \eqref{lll} if we take $l=[\frac{\nu}{\sigma}\log_2(A)]$, where $[\,\cdot\,]$ denotes the integer part function on the real numbers. Then we have
\begin{align*}
2^{\frac{lQ}{\nu}(\frac{1}{p}-\frac{1}{r}-\frac{\lambda}{Q})}A^{1-\theta}+ &2^{\frac{lQ}{\nu}(\frac{1}{q}-\frac{1}{r}-\frac{\mu}{Q})}A^{-\theta}\\
&\lesssim A^{\frac{1}{\sigma}(\frac{Q}{p}-\frac{Q}{r}-\lambda)+\frac{1}{\sigma}(\lambda-\frac{Q}{p}+\frac{Q}{r})}+A^{\frac{1}{\sigma}(\frac{Q}{q}-\frac{Q}{r}-\mu)+\frac{1}{\sigma}(\mu-\frac{Q}{q}+\frac{Q}{r})}=2,
\end{align*}
where in the last estimate we have used  that
\begin{equation}
1-\theta=\frac{1}{\sigma}(\lambda-\frac{Q}{p}+\frac{Q}{r})
\;\textrm{ and }\; -\theta=\frac{1}{\sigma}(\mu-\frac{Q}{q}+\frac{Q}{r}).
\end{equation}
Hence we obtain
$$  \Vert f\Vert_{\dot{B}^{0}_{r,1}(G)}\lesssim  \Vert f \Vert_{\dot{B}^{\lambda}_{p,\infty}(G)}^{\theta}\Vert f \Vert_{\dot{B}^{\mu}_{q,\infty}(G)}^{1-\theta},$$
which shows the estimate (i). The inequality in (ii) can be proved in a similar way. Finally, we have (iii) if we use (i) together with the embeddings $\dot{B}^{0}_{r,1}(G)\hookrightarrow L^r(G)$ and $H^{\rho,r}(G)\hookrightarrow B^{\rho}_{r,\infty}(G)$ proved in Theorem \ref{Emb1}.
\end{proof}

\section{Localisation of Besov spaces on graded Lie groups}
\label{SEC:local}

In this section we prove local embedding properties of  Besov spaces $B^{r}_{p,q}(G)$ with the ones defined in a local way on  $\mathbb{R}^n$. First we recall the notion of Besov spaces on $\mathbb{R}^n.$ For $x,h\in\mathbb{R}^n$ and $f\in L^{p}(\mathbb{R}^n),$ let us denote 
\begin{eqnarray}
\Delta_{h}^{m}f(x):=\sum_{k=0}^m C^{k}_{m}(-1)^{m-k}f(x+kh)
\end{eqnarray}
and
\begin{equation}\label{other}
\omega^{m}_{p}(t,f):=\sup_{|h|\leq t}\Vert \Delta_{h}^{m}f \Vert_{L^{p}(\mathbb{R}^n)}.
\end{equation}
Then, by following \cite{bwang} for $r>0$ and  $1\leq p, q\leq\infty$, the Euclidean Besov space $B^{r}_{p,q}(\mathbb{R}^{n})$ can be considered endowed with the norm
\begin{equation}
\Vert f\Vert_{ B^{r}_{p,q}(\mathbb{R}^{n}) }=\Vert f\Vert_{L^{p}(\mathbb{R}^n)}+\sum_{m=0}^{n}\left(\int_{0}^{\infty}(t^{-r}\omega^{m}_{p}(t,f))^{q}dt\right)^{\frac{1}{q}}
\end{equation}
for $q<\infty$, and with an obvious modification in the case $q=\infty.$ By considering the property $(I-\mathcal{L})^{\frac{\alpha}{2}}(B^{r}_{p,q}(\mathbb{R}^n))=B^{r-\alpha}_{p,q}(\mathbb{R}^n),$ where $\mathcal{L}$ is the Laplace operator on $\mathbb{R}^n,$ for $r<0,$ we can consider on $B^{r}_{p,q}(\mathbb{R}^n),$ $1\leq p,q\leq \infty,$ the norm
\begin{equation}
\Vert f \Vert_{B^{r}_{p,q}(\mathbb{R}^n)}=\Vert (I-\mathcal{L})^{-\frac{s}{2}}f \Vert_{B^{s+r}_{p,q}(\mathbb{R}^n)},
\end{equation}
where $s$ is a fixed real satisfying $s+r>0.$ It is a known fact that the definitions of Besov spaces on $\mathbb{R}^n$ by using the functional \eqref{other} are equivalent to those using Littlewood-Paley partitions for the Laplacian on $\mathbb{R}^n,$ in a analogous way as we have defined Besov space on graded Lie groups by using Rockland operators. It can be obtained if in particular in  Definition \ref{nonhomogeneaous} we put $G=\mathbb{R}^n$ and $\mathcal{R}=\Delta_{x},$ the positive Laplacian over $\mathbb{R}^n.$   If we denote for a graded Lie group $G$ the localisation space by
\begin{equation}
B^{r}_{p,q}(G,loc)=\{f\in \mathcal{D}'(G):\phi\cdot f\in B^{r}_{p,q}(G),\textnormal{ for all }\phi\in C^{\infty}_{0}(G)\}
\end{equation}
we have the following result.
\begin{proposition}
If $B^{r}_{p,q}(G,loc)$ denotes the local Besov space defined above, then for all $r\in\mathbb{R},$ $1<p<\infty$ and $0<q\leq \infty$ we have 
\begin{equation}
B^{\frac{r}{\nu_1}}_{p,q}(G,loc)\subset B^{r}_{p,q}(\mathbb{R}^n,loc)\subset B^{\frac{r}{\nu_n}}_{p,q}(G,loc),
\end{equation}
where $\nu_1$ and $\nu_n$  are respectively the smallest and the largest weights of the
dilations.
\end{proposition}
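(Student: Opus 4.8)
The plan is to deduce the proposition from two ingredients already at hand: the identification of Besov spaces as real interpolation spaces of Sobolev spaces (Theorem \ref{independence}, in particular the identity \eqref{eq11}, for the groupside, together with its classical counterpart on $\mathbb{R}^n$), combined with a \emph{local} two-sided comparison between the Rockland--Sobolev spaces $H^{s,p}(G)$ and the Euclidean Bessel-potential spaces $H^{s,p}(\mathbb{R}^n)$. Since every space occurring in the chain is, by definition of $B^{r}_{p,q}(G,\mathrm{loc})$ and of the localised Euclidean spaces, stable under multiplication by a test function $\phi\in C^\infty_0(G)\equiv C^\infty_0(\mathbb{R}^n)$, it suffices to prove the corresponding continuous inclusions for functions supported in a fixed relatively compact set and then to transfer the conclusion to the localised spaces.

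First I would establish the Sobolev analogue of the statement: on a fixed compact set, $H^{r/\nu_1,p}(G)\hookrightarrow H^{r,p}(\mathbb{R}^n)\hookrightarrow H^{r/\nu_n,p}(G)$. The mechanism is that the left-invariant vector fields $X_j$ adapted to the gradation are homogeneous of degree $\nu_j$ and, on a relatively compact neighbourhood, are comparable to the Euclidean derivatives up to lower-order terms; consequently $(I+\mathcal{R})^{s/\nu}$ is squeezed, in the scale of Euclidean operators, between orders dictated by the smallest weight $\nu_1$ and the largest weight $\nu_n$. A comparison of precisely this kind is recorded in \cite[Section 4]{FR2}, and I would invoke it there; the only genuine bookkeeping is to match each extreme weight to the correct side of the inclusion, which is where the anisotropy and hypoellipticity of $\mathcal{R}$ enter.

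Next I would feed these Sobolev inclusions into the real interpolation functor. Fixing $a<r<b$ with $r=b(1-\theta)+a\theta$, the classical identity $B^{r}_{p,q}(\mathbb{R}^n)=(H^{b,p}(\mathbb{R}^n),H^{a,p}(\mathbb{R}^n))_{\theta,q}$ and the group identity \eqref{eq11} turn each inclusion between the two endpoint pairs into an inclusion of the interpolated spaces, because $(\cdot,\cdot)_{\theta,q}$ is an exact monotone functor. Applying it to the left half of the Sobolev chain (whose endpoints carry the index $r/\nu_1$ on the group side) gives $B^{r/\nu_1}_{p,q}(G)\hookrightarrow B^{r}_{p,q}(\mathbb{R}^n)$, and applying it to the right half gives $B^{r}_{p,q}(\mathbb{R}^n)\hookrightarrow B^{r/\nu_n}_{p,q}(G)$. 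Multiplying by cut-offs $\phi\in C^\infty_0$ then produces the asserted chain for the localised spaces.

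I expect the main obstacle to be the first step: the precise two-sided local comparison of $\mathcal{R}$-smoothness with Euclidean smoothness, with the correct placement of $\nu_1$ and $\nu_n$, since this is the only place where the graded structure is used in an essential, non-formal way. A secondary technical point is the parameter range: the interpolation description is available for $1<p<\infty$ and $1\le q<\infty$, whereas the proposition allows $0<q\le\infty$. For the endpoint $q=\infty$ and for $q<1$ I would avoid interpolation altogether and argue directly from the dyadic definition, comparing the Littlewood--Paley pieces $\psi_l(\mathcal{R})$ with the Euclidean frequency annuli and using the monotone, shell-by-shell estimates underlying the Sobolev comparison of Step 1.
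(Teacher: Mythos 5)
Your proposal follows essentially the same route as the paper: one quotes the local two-sided Sobolev embedding $H^{\frac{s}{\nu_1},p}(G,loc)\subset H^{s,p}(\mathbb{R}^n,loc)\subset H^{\frac{s}{\nu_n},p}(G,loc)$ from \cite[Theorem 4.4.24]{FR2} and transfers it to Besov spaces by real interpolation via Theorem \ref{independence}. Your additional caution about the parameter range (the interpolation identity being available only for $1<p<\infty$, $1\le q<\infty$, while the statement allows $0<q\le\infty$) addresses a point the paper's two-line proof passes over in silence, so it is a refinement of, not a departure from, the same argument.
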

\begin{proof}
It was proved in \cite[Theorem 4.4.24]{FR2} that the following embedding of local Sobolev spaces holds:
\begin{equation}
H^{\frac{s}{\nu_1},p}(G,loc)\subset H^{s,p}(\mathbb{R}^n,loc)\subset H^{\frac{s}{\nu_n},p}(G,loc), 
\end{equation}
for all $s\in\mathbb{R}.$ Thus, the result now follows by using real interpolation in the sense of Theorem \ref{independence}. 
\end{proof}

\section{Fourier multipliers and spectral multipliers}\label{Multipliers}

In this section we give results for the boundedness of spectral and of Fourier multipliers in Besov spaces on graded Lie groups.

\subsection{Negative results for left-invariant operators}

There are some restrictions on indices for Besov spaces on which left-invariant operators may be bounded.

\begin{theorem}
Let $G$ be a graded Lie group and let $T$ be a linear left-invariant
operator bounded from $B^{r}_{p,q}(G)$ (respectively, $\dot{B}^{r}_{p,q}(G)$) into $B^{\tilde{r}}_{\tilde{p},\tilde{q}}(G),$ (respectively, $\dot{B}^{\tilde{r}}_{\tilde{p},\tilde{q}}(G)),$ for $1\leq p,\tilde{p}<\infty,$  $ -\infty<r,\tilde{r}<\infty,$ and $0<q,\tilde{q}\leq \infty.$ If $1\leq \tilde{p}<p<\infty,$ then $T = 0.$ 
\end{theorem}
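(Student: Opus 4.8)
The plan is to combine the left-invariance of $T$ with the non-compactness of $G$ in a superposition argument: assuming $T\neq 0$, I will produce a fixed input, spectrally localised in a single dyadic block, whose $N$-fold superposition of widely separated left-translates has source Besov norm of size $N^{1/p}$ but target Besov norm of size at least $N^{1/\tilde p}$. Since $\tilde p<p$ gives $1/\tilde p-1/p>0$, letting $N\to\infty$ will force the image to vanish, a contradiction.

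First I would reduce to a spectrally localised test function. Assuming $T\neq 0$, choose $f$ with $Tf\neq 0$; writing $f=\sum_{l}\psi_{l}(\mathcal{R})f$ via \eqref{deco1} and using boundedness of $T$, some block $\phi:=\psi_{l}(\mathcal{R})f$ must satisfy $T\phi\neq 0$ (for $q=\infty$ one argues instead with the finite partial sums, which suffices for the separation estimate below). The key feature of such $\phi$ is that $\psi_{l'}(\mathcal{R})\phi=0$ except for finitely many neighbouring indices $l'$, so that $\|\phi\|_{{B}^{r}_{p,q}}\asymp 2^{lr/\nu}\|\phi\|_{L^{p}}$, with implied constants controlled by the operator bounds in \eqref{boundofsigmal}. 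Since $\mathcal{R}$ is left-invariant, each $\psi_{l'}(\mathcal{R})$ commutes with every left translation $\tau_{a}$, and $T\tau_{a}=\tau_{a}T$; hence for $F_{N}:=\sum_{i=1}^{N}\tau_{a_{i}}\phi$ the function $F_{N}$ is again localised in the same block and $TF_{N}=\sum_{i=1}^{N}\tau_{a_{i}}(T\phi)$.

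The analytic core is the almost-additivity of $L^{s}$ norms under well-separated left-translations: for a fixed $u\in L^{s}(G)$ and points $a_{1},\dots,a_{N}$ whose mutual quasi-distances tend to infinity, $\|\sum_{i}\tau_{a_{i}}u\|_{L^{s}}^{s}\to N\|u\|_{L^{s}}^{s}$, because the translates become asymptotically disjointly supported (this uses only that $G$ is unimodular and non-compact, the Haar measure being the Lebesgue measure on $G\equiv\mathbb{R}^{n}$). Applying this at the source with $u=\phi$ and $s=p$ gives $\|F_{N}\|_{{B}^{r}_{p,q}}\lesssim 2^{lr/\nu}\|F_{N}\|_{L^{p}}\lesssim N^{1/p}\|\phi\|_{{B}^{r}_{p,q}}$. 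At the target, pick $l_{0}$ with $u_{0}:=\psi_{l_{0}}(\mathcal{R})(T\phi)\neq 0$ in $L^{\tilde{p}}$; since the $\ell^{\tilde{q}}$ (quasi-)norm dominates a single term, $\|TF_{N}\|_{{B}^{\tilde{r}}_{\tilde{p},\tilde{q}}}\geq 2^{l_{0}\tilde{r}/\nu}\|\sum_{i}\tau_{a_{i}}u_{0}\|_{L^{\tilde{p}}}\gtrsim N^{1/\tilde{p}}\|u_{0}\|_{L^{\tilde{p}}}$ once the $a_{i}$ are separated enough (finitely many conditions, chosen to serve $\phi$ and $u_{0}$ simultaneously). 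Combining with the boundedness of $T$ yields $c\,N^{1/\tilde{p}}\leq\|TF_{N}\|_{{B}^{\tilde{r}}_{\tilde{p},\tilde{q}}}\leq\|T\|\,\|F_{N}\|_{{B}^{r}_{p,q}}\lesssim N^{1/p}$, so $N^{1/\tilde{p}-1/p}\lesssim 1$; since $\tilde{p}<p$, letting $N\to\infty$ is a contradiction, whence $T\phi=0$ and therefore $T=0$.

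I expect the main obstacle to be the almost-additivity estimate together with the reduction to a single spectral block: making precise that the translates of a fixed $L^{s}$ function decouple in norm, uniformly enough to retain the exact powers $N^{1/p}$ and $N^{1/\tilde p}$, is where the non-compactness of $G$ is genuinely used, and the $q=\infty$ (or $\tilde q=\infty$) endpoints require handling the spectral resolution through finite partial sums rather than norm convergence.
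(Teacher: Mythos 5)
Your argument rests on the same mechanism as the paper's proof --- left-invariance of $T$ plus non-compactness of $G$, detected through the different additivity exponents of $L^{p}$ and $L^{\tilde p}$ under superpositions of far-apart translates --- but the packaging differs. The paper uses only \emph{two} translates: applying $\lim_{|h|\to\infty}\Vert g+\tau_{h}g\Vert_{L^{s}}=2^{1/s}\Vert g\Vert_{L^{s}}$ blockwise to $\psi_{l}(I+\mathcal{R})f$ and to $\psi_{l}(I+\mathcal{R})Tf$, it obtains $2^{1/\tilde{p}}\Vert Tf\Vert_{B^{\tilde{r}}_{\tilde{p},\tilde{q}}}\le 2^{1/p}\Vert T\Vert\,\Vert f\Vert_{B^{r}_{p,q}}$ for an \emph{arbitrary} $f$, whence $\Vert T\Vert\le 2^{1/p-1/\tilde{p}}\Vert T\Vert$ and $T=0$. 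Because the inequality is derived for arbitrary $f$ at the level of the full Besov norms, no localisation to a single dyadic block is needed and the endpoints $q=\infty$, $\tilde{q}=\infty$ require no extra care. Your $N$-translate version replaces the quoted limit by a self-contained decoupling estimate and the operator-norm trick by a divergence as $N\to\infty$; this is a perfectly viable classical route (it is in essence H\"ormander's argument that translation-invariant operators from $L^{p}$ to $L^{\tilde{p}}$ with $\tilde{p}<p$ vanish), and the individual steps you describe --- commutation of $\psi_{l'}(\mathcal{R})$ and $T$ with $\tau_{a}$, the equivalence $\Vert\phi\Vert_{B^{r}_{p,q}}\asymp 2^{lr/\nu}\Vert\phi\Vert_{L^{p}}$ for a single block, the lower bound by one block at the target, and the almost-additivity with constants uniform in $N$ (achievable since for each fixed $N$ the separation may be taken as large as needed) --- all go through.

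The one genuine gap is the reduction to a single spectral block at the endpoint $q=\infty$. Your scheme proves $T\phi=0$ for every spectrally localised $\phi$, and to upgrade this to $T=0$ you need $\sum_{l\le L}\psi_{l}(\mathcal{R})f\to f$ in $B^{r}_{p,q}(G)$ as $L\to\infty$; this holds for $q<\infty$ but fails for $q=\infty$, where spectrally localised functions are not dense, so knowing $T\bigl(\sum_{l\le L}\psi_{l}(\mathcal{R})f\bigr)=0$ for every $L$ does not yield $Tf=0$. The remark about ``arguing with finite partial sums'' does not by itself close this. The cleanest repair is precisely the paper's move: run the translation argument on an arbitrary $f$ rather than on a single block, which the two-translate formulation makes painless.
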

\begin{proof}
Let $|\cdot|$ be a homogeneous norm on $G.$ It is known  (see \cite[Lemma 3.2.5]{FR}) that
$$  \lim_{|h|\rightarrow \infty}\Vert f+\tau_hf \Vert_{L^{p}(G)}=2^{\frac{1}{p}}\Vert f\Vert_{L^{p}(G)},$$ where $\tau_{h}$ is defined by $\tau_{h}f(x)=f(hx)$, $x,h\in G.$ First, we will  prove the case where  $0<\tilde{q}<\infty$ in the inhomogeneous case. By the boundedness of $T$ we have $\Vert Tf\Vert_{B^{\tilde{r}}_{\tilde{p},\tilde{q}}(G)}\leq \Vert T\Vert \Vert f\Vert_{B^{{r}}_{{p},{q}}(G)},$ where $\Vert T \Vert=\Vert T\Vert_{B(r,p,q;\tilde{r},\tilde{p},\tilde{q})}$ is the usual operator norm.
So, for every $h\in G$ we have
$$\Vert T(f+\tau_hf)\Vert_{B^{\tilde{r}}_{\tilde{p},\tilde{q}}(G)}\leq C\Vert f+\tau_hf\Vert_{B^{{r}}_{{p},{q}}(G)}.$$ Now, we compute both sides as $|h|\rightarrow \infty.$ We observe that
\begin{align*}
\Vert T(f+\tau_hf)\Vert_{B^{\tilde{r}}_{\tilde{p},\tilde{q}}(G)} &=\left(   \sum_{l=0}^{\infty}2^{\frac{l}{\nu}\tilde{r}\tilde{q}}\Vert  \psi_{l}(I+\mathcal{R})T(f+\tau_hf)\Vert^{\tilde{q}}_{L^{\tilde{p}}(G)}      \right)^{\frac{1}{ \tilde{q}  }}\\
&=\left(   \sum_{l=0}^{\infty}2^{\frac{l}{\nu}\tilde{r}\tilde{q}}\Vert  \psi_{l}(I+\mathcal{R})Tf+\psi_{l}(I+\mathcal{R})T\tau_hf\Vert^{\tilde{q}}_{L^{\tilde{p}}(G)}      \right)^{\frac{1}{  \tilde{q}  }}.
\end{align*}
Because, $T$ and $\psi_{l}(\mathcal{R}),$ $l\in\mathbb{N}_0,$ are left-invariant, we  obtain
\begin{align*}
\lim_{|h|\rightarrow\infty}\Vert  \psi_{l}(I+\mathcal{R}) & Tf+\psi_{l}  (I+\mathcal{R})T\tau_hf\ \Vert_{L^{\tilde{p}}(G)}  \\ &=\lim_{|h|\rightarrow\infty}\Vert  \psi_{l}(I+\mathcal{R})Tf+ \tau_h\psi_{l}(I+\mathcal{R})Tf\ \Vert_{L^{\tilde{p}}(G)}\\
&=2^{\frac{1}{\tilde{p}}}\Vert  \psi_{l}(I+\mathcal{R})Tf \Vert_{L^{\tilde{p}}(G)}.
\end{align*} 
Hence $$\lim_{|h|\rightarrow \infty} \Vert T(f+\tau_hf)\Vert_{B^{\tilde{r}}_{\tilde{p},\tilde{q}}(G)} =2^{\frac{1}{\tilde{p}} }\Vert Tf\Vert_{B^{\tilde{r}}_{\tilde{p},\tilde{q}}(G)}. $$
With a similar proof we obtain $$\lim_{|h|\rightarrow \infty} \Vert f\Vert_{B^{{r}}_{{p},{q}}(G)} =2^{\frac{1}{p}}\Vert f\Vert_{B^{{r}}_{{p},{q}}(G)}. $$
Hence $$ 2^{\frac{1}{\tilde{p} } }\Vert Tf\Vert_{B^{\tilde{r}}_{\tilde{p},\tilde{q}}(G)}\leq 2^{\frac{1}{p}}\Vert T \Vert\Vert f\Vert_{B^{{r}}_{{p},{q}}(G)}.$$
The last inequality implies that $\Vert T\Vert\leq 2^{\frac{1}{p}-\frac{1}{\tilde{p}}}\Vert T\Vert.$  Thus, if $p>\tilde{p}$ then $T$ is the null operator. The proof for $\tilde{q}=\infty$ is analogous.
\end{proof}

\subsection{Fourier multipliers on Besov spaces} 

Throughout this subsection we consider (right) homogeneous and inhomogeneous Besov spaces. In order to introduce our main result of this subsection we consider the following remark on the commutativity of operators with spectral measures. 

\begin{remark}\label{calculus}
Let $R$ be a self-adjoint operator with spectral measure $E(\lambda)_{\lambda>0.}$ Then, the spectral theorem gives
$R=\int \lambda dE_{\lambda},$ and by the Stone's formula we have the following integral representation for every  spectral projection $E(\lambda),$ (see Theorem 7.17 of \cite{Weid})
\begin{equation}
E(\lambda)=\lim_{\delta\rightarrow 0^{+} }\lim_{\varepsilon\rightarrow 0^{+}}\int_{-\infty}^{\lambda+\delta}([t-i\varepsilon-R]^{-1}-[t+i\varepsilon-R]^{-1})dt.
\end{equation}
If a closed operator $T$ commutes with $R,$ then $T$  commutes with its resolvent operator $(z-R)^{-1}$ and hence with its spectral measure $(E(\lambda))_{\lambda>0}.$ Now, if $f$ is a bounded continuous function on $[0,\infty)$  and 
\begin{equation}
f(R)=\int f(\lambda)dE(\lambda),
\end{equation}
then we can write
\begin{equation}
f(R)=\lim_{\Vert P\Vert\rightarrow 0^{+} }\sum_{i=1,\lambda_i\in P}^{\infty}[f(\lambda_i)][E_{\lambda_i}-E_{\lambda_{i-1}}],
\end{equation}
where in the limit above, $P=\{0=\lambda_0<\lambda_1<\lambda_2<\cdots\}$ denotes a partition of $[0,\infty).$ So, if $T$ commutes with $R,$ then it also commutes with every bounded continuous function of $R$ defined by the functional calculus.
\end{remark}
Now we present the following theorem on Fourier multipliers in Besov spaces where we establish a connection between $L^p$ boundedness and Besov continuity of Fourier multipliers.
\begin{theorem}\label{mt2}
Let $G$ be a graded Lie group. Let $\sigma=\{\sigma(\pi):\pi\in \widehat{G}\}$ be a $\mu$-measurable field of operators in $L^{2}(\widehat{G}).$ Let us assume that the corresponding operator $T=T_{\sigma}$, given by 
\begin{equation*}
T_{\sigma}u(x)=\int_{ \widehat{G}}\mathrm{Tr}(\pi(x)\sigma(\pi)\widehat{u}(\pi))d\mu(\pi),
\end{equation*}
is a bounded operator from $L^{p_1}(G)$ into $L^{p_2}(G),$ $1\leq p_i\leq \infty.$ Then $T$ is a bounded operator from the (right) Besov space $\dot{B}^{r}_{p_1,q}(G)$ into the (right) Besov space $\dot{B}^{r}_{p_2,q}(G),$ for all $-\infty<r<\infty$ and $0<q\leq \infty.$ 
Moreover, $T$ is also a  bounded operator from the (right) Besov space ${B}^{r}_{p,q}(G)$ into the (right) Besov space ${B}^{r}_{p,q}(G)$.  
\end{theorem}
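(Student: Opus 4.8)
The plan is to exploit the fact that the Fourier multiplier $T_\sigma$ acts on the group Fourier transform side by \emph{left} multiplication, $\widehat{T_\sigma f}(\pi)=\sigma(\pi)\widehat{f}(\pi)$, whereas the Littlewood--Paley blocks $\psi_l(\mathcal{R})$ entering the \emph{right} Besov norm are functions of a right-invariant positive Rockland operator $\mathcal{R}$ and thus act by multiplication on the opposite side of $\widehat{f}(\pi)$. This is precisely why the statement is formulated for right Besov spaces. Since a left-invariant operator such as $T_\sigma$ commutes with a right-invariant operator such as $\mathcal{R}$ (equivalently, left and right multiplication of operators on $\widehat{f}(\pi)$ always commute), Remark~\ref{calculus} applies and yields that $T_\sigma$ commutes with every bounded function of $\mathcal{R}$ defined through the functional calculus. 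Hence
\[
T_\sigma\,\psi_l(\mathcal{R}) = \psi_l(\mathcal{R})\,T_\sigma, \qquad l\in\mathbb{N}_0 .
\]

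Granting this commutation, the homogeneous estimate follows directly from the definition of the Besov norm. For $0<q<\infty$ I would write
\begin{align*}
\Vert T_\sigma f\Vert_{\dot{B}^{r}_{p_2,q}(G)}^{q}
= \sum_{l\in\mathbb{N}_0} 2^{\frac{l}{\nu}rq}\Vert \psi_l(\mathcal{R})\,T_\sigma f\Vert_{L^{p_2}(G)}^{q}
= \sum_{l\in\mathbb{N}_0} 2^{\frac{l}{\nu}rq}\Vert T_\sigma\,\psi_l(\mathcal{R}) f\Vert_{L^{p_2}(G)}^{q},
\end{align*}
and then pull the operator norm of $T_\sigma$ out of each summand using the hypothesis that $T_\sigma\colon L^{p_1}(G)\to L^{p_2}(G)$ is bounded:
\begin{align*}
\Vert T_\sigma f\Vert_{\dot{B}^{r}_{p_2,q}(G)}^{q}
\leq \Vert T_\sigma\Vert_{\mathcal{L}(L^{p_1},L^{p_2})}^{q}\sum_{l\in\mathbb{N}_0} 2^{\frac{l}{\nu}rq}\Vert \psi_l(\mathcal{R}) f\Vert_{L^{p_1}(G)}^{q}
= \Vert T_\sigma\Vert_{\mathcal{L}(L^{p_1},L^{p_2})}^{q}\,\Vert f\Vert_{\dot{B}^{r}_{p_1,q}(G)}^{q},
\end{align*}
which is the claimed bound. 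The case $q=\infty$ is identical with the $\ell^{q}$-sum replaced by a supremum over $l$, and the inhomogeneous assertion follows verbatim after replacing $\psi_l(\mathcal{R})$ by $\psi_l(I+\mathcal{R})$ throughout, noting that $T_\sigma$ commutes with $I+\mathcal{R}$ and hence with $\psi_l(I+\mathcal{R})$ by the same appeal to Remark~\ref{calculus}.

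The only genuinely substantive step is the commutation $T_\sigma\,\psi_l(\mathcal{R})=\psi_l(\mathcal{R})\,T_\sigma$, and this is exactly where the restriction to right Besov spaces is indispensable: had one used a left-invariant Rockland operator, $\psi_l(\mathcal{R})$ would act on $\widehat{f}(\pi)$ by multiplication on the \emph{same} side as $\sigma(\pi)$, and the two would not commute in general since operator multiplication is noncommutative. I therefore expect the commutation to be the main point requiring care; once it is secured through Remark~\ref{calculus}, everything else reduces to the elementary manipulation of the Besov norm displayed above together with the assumed $L^{p_1}\to L^{p_2}$ boundedness, with no further analytic input needed.
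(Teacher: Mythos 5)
Your proposal is correct and follows essentially the same route as the paper: both arguments reduce the theorem to the commutation $T_\sigma\,\psi_l(\mathcal{R})=\psi_l(\mathcal{R})\,T_\sigma$ for a right-invariant Rockland operator $\mathcal{R}$ (the paper obtains this by writing $T_\sigma f=f\ast\mathscr{F}^{-1}\sigma$ and using that right-invariant operators pass through right convolutions, then invoking Remark~\ref{calculus}, exactly as you do), and then conclude by the same term-by-term estimate of the Besov norm using the assumed $L^{p_1}\to L^{p_2}$ bound. Your remark on why right Besov spaces are essential matches the paper's reason for the hypothesis.
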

\begin{proof}
For $f\in \mathscr{S}(G)$ we have $\mathscr{F}_G(Tf)(\pi)=\sigma(\pi)\widehat{f}(\pi)=\mathscr{F}_G(f\ast (\mathscr{F}_G^{-1}\sigma))(\pi).$ If $\mathcal{R}$ is a right invariant positive Rockland operator, then for every $a\in\mathbb{C}$ (see Proposition 4.4.30 of \cite{FR2})
\begin{equation}
\mathcal{R}^{a}Tf=\mathcal{R}^{a}(f\ast\mathscr{F}_G^{-1}\sigma)=(\mathcal{R}^{a}f)\ast \mathscr{F}_G^{-1}\sigma=T(\mathcal{R}^{a}f),
\end{equation}
in particular $T$ commutes with $\mathcal{R}.$ Since $T$ commutes with $\mathcal{R},$ it commutes with its spectral measures, and with every bounded function of $\mathcal{R}$ defined by functional calculus (see, Remark \ref{calculus}). So,  
\begin{align*}
\Vert Tf \Vert_{\dot{B}^r_{p_2,q}(G)}^{q}&=\sum_{l\in\mathbb{N}_{0}}2^{\frac{lqr}{\nu}}\Vert \psi_{l}(\mathcal{R})Tf\Vert^{q}_{L^{p_2}}\\
&=\sum_{l\in\mathbb{N}_{0}}2^{\frac{lqr}{\nu}}\Vert T\psi_{l}(\mathcal{R})f\Vert^{q}_{L^{p_2}}\\
&\leq\sum_{l\in\mathbb{N}_{0}}2^{\frac{lqr}{\nu}}\Vert T\Vert^{q}_{\mathcal{L}(L^{p_1}(G), L^{p_2}(G))}\Vert\psi_{l}(\mathcal{R})f\Vert^{q}_{L^{p_1}}\\
&=\Vert T\Vert^{q}_{\mathcal{L}(L^{p_1}(G), L^{p_2}(G))}\Vert f \Vert_{\dot{B}^r_{p_1,q}(G)}^{q}.
\end{align*}
Thus $\Vert Tf \Vert_{\dot{B}^r_{p_2,q}(G)}^{q}\leq \Vert T\Vert_{\mathcal{L}(L^{p_1}(G), L^{p_2}(G))} \Vert f \Vert_{\dot{B}^{r}_{p_1,q}(G)}.$ The proof for the inhomogeneous case is similar. So we end the proof.
\end{proof}
We end this section with applications of Theorem \ref{mt2} to some examples for the Fourier multipliers bounded on $L^{p}$ and (right) Besov spaces. For notations and terminologies we follow \cite{FR2}.
\begin{example}
Let $T:\mathscr{S}(G)\rightarrow \mathscr{S}(G),$ $G$ be a graded  Lie group of homogeneous dimension $Q$. If $T$ is left-invariant and homogeneous of degree $\nu$ with
\begin{eqnarray}
-Q<\textnormal{Re}(\nu)<0,
\end{eqnarray}
and such that the right convolution kernel of $T$ is continuous away from the origin, then $T:L^{p}(G)\rightarrow L^{q}(G)$ is a bounded operator for $1<p,q<\infty$ and
\begin{eqnarray}\label{eq1}
\frac{1}{q}-\frac{1}{p}=\frac{\textnormal{Re}(\nu)}{Q}.
\end{eqnarray}
(c.f. Proposition 3.2.8 of \cite[p. 138]{FR2}). By Theorem \ref{mt2}, $T$ is a bounded operator from the right Besov space $B^{r}_{p,s}(G)$ into  the right Besov space  $B^{r}_{q,s}(G)$ with $p$ and $q$ satisfying \eqref{eq1}, $r\in\mathbb{R}$ and $0<s\leq \infty.$
\end{example}
\begin{example}
Let $T:L^{2}(G)\rightarrow L^{2}(G)$ be a bounded and left-invariant operator. Let us assume that its distributional kernel coincides on $G\setminus\{0\}$ with a continuously differentiable function $k$ with
\begin{align*}
& \int_{|x|\geq \frac{1}{2}}|k(x)|dx\leq A<\infty,\,\, \sup_{0<|x|\leq 1}|x|^{Q}|k(x)|\leq A,\\
&\sup_{0<|x|\leq 1}|x|^{Q+v_{j}}|X_{j}k(x)|\leq A, \,\,j=1,2,\ldots,
\end{align*}
for some homogeneous quasi-norm $|\cdot|$ on $G$ and for some $A>0.$ Then $T$ is weak type (1,1) and bounded on $L^{p}(G),$ $1<p<\infty,$ (c.f. \cite[p. 145]{FR2}). By using Theorem \ref{mt2} we obtain the boundedness of $T$ on the  right Besov space  $B^{r}_{p,q}(G),$ $0<q\leq\infty$ and $r\in\mathbb{R}.$ 
\end{example}
\begin{example}
Let $G$ be a graded Lie group. Let $\sigma\in L^{2}(\widehat{G}).$ If 
$$\Vert \sigma \Vert_{H^{s},l.u, L,\eta, \mathcal{R} },\Vert \sigma \Vert_{H^{s},l.u, R,\eta, \mathcal{R} }<\infty$$ with $s>\frac{Q}{2},$ then the corresponding multiplier $T_{\sigma}$ extends to a bounded operator on $L^{p}(G)$ for all $1<p<\infty.$  By Theorem \ref{HMT} we have
\begin{equation}
\Vert T_{\sigma} \Vert_{ \mathcal{L}(L^p(G))    }\leq C  \max\{ \Vert \sigma \Vert_{H^{s},l.u, L,\eta, \mathcal{R} },\Vert \sigma \Vert_{H^{s},l.u, R,\eta, \mathcal{R} } \}.
\end{equation}
This is the H\"ormander-Mihlin Theorem presented in \cite{FR}. By Theorem \ref{mt2}, we obtain the boundedness of $T_{\sigma}$ on the right  Besov space $B^{r}_{p,q}(G)$ and by observing the proof of such theorem we conclude that
\begin{equation}
\Vert T_{\sigma} \Vert_{\mathcal{L}(B^{r}_{p,q}(G))}\leq C  \max\{ \Vert \sigma \Vert_{H^{s},l.u, L,\eta, \mathcal{R} },\Vert \sigma \Vert_{H^{s},l.u, R,\eta, \mathcal{R} } \}.
\end{equation}
\end{example}

\bibliographystyle{amsplain}

\end{document}